\numberwithin{table}{section}
\numberwithin{equation}{section}
\theoremstyle{plain}
\newtheorem{theorem}{Theorem}[section]
\newtheorem{proposition}[theorem]{Proposition}
\newtheorem{definition}[theorem]{Definition}
\newtheorem{lemma}[theorem]{Lemma}
\newtheorem{example}[theorem]{Example}
\newtheorem{runningexample}[theorem]{Running Example}
\newtheorem{corollary}[theorem]{Corollary}
\newtheorem{remark}[theorem]{Remark}
\author[1]{ \textbf{Bryan S. Hernandez}}
 \author[2,3,4,5]{\textbf{Eduardo R. Mendoza}}
 \author[1,*]{\textbf{Aurelio A. de los Reyes V}} 
\affil[1]{\small \textit{Institute of Mathematics, University of the Philippines Diliman, Quezon City 1101, Philippines }}
\affil[2]{\small \textit{Institute of Mathematical Sciences and Physics, University of the Philippines,  Los Ba\~{n}os, Laguna 4031, Philippines}}
\affil[3]{\small \textit{Mathematics and Statistics Department, De La Salle University, Manila  0922, Philippines}}
\affil[4]{\small \textit{Max Planck Institute of Biochemistry, Martinsried, Munich, Germany}}
\affil[5]{\small \textit{LMU Faculty of Physics, Geschwister -Scholl- Platz 1, 80539 Munich, Germany}}
\affil[*]{Corresponding author: \texttt{adlreyes@math.upd.edu.ph}}
\title{\vspace{3.5cm}\textbf{Fundamental Decompositions and Multistationarity of Power-Law Kinetic Systems}}
\date{\normalsize (Received October 15, 2019)}
\begin{document}
\maketitle
\thispagestyle{empty}
\begin{abstract}
The fundamental decomposition of a chemical reaction network (also called its ``$\mathscr{F}$-decomposition'') is the set of subnetworks generated by the partition of its set of reactions into the ``fundamental classes'' introduced by Ji and Feinberg in 2011 as the basis of their ``higher deficiency algorithm'' for mass action systems. The first part of this paper studies the properties of the $\mathscr{F}$-decomposition, in particular, its independence (i.e., the network's stoichiometric subspace is the direct sum of the subnetworks' stoichiometric subspaces) and its incidence-independence (i.e., the image of the network's incidence map is the direct sum of the incidence maps' images of the subnetworks). We derive necessary and sufficient conditions for these properties and identify network classes where the $\mathscr{F}$-decomposition coincides with other known decompositions. The second part of the paper applies the above-mentioned results to improve the Multistationarity Algorithm for power-law kinetic systems (MSA), a general computational approach that we introduced in previous work. We show that for systems with non-reactant determined interactions but with an independent $\mathscr{F}$-decomposition, the transformation to a dynamically equivalent system with reactant-determined interactions -- required in the original MSA -- is not necessary. We illustrate this improvement with the subnetwork of Schmitz's carbon cycle model recently analyzed by Fortun et al.
\end{abstract}
\baselineskip=0.30in

\section{Introduction}
\label{intro}
The fundamental decomposition (also called ``$\mathscr{F}$-decomposition'') of a chemical reaction network (CRN) is the set of subnetworks generated by the partition of its set of reactions into the ``fundamental classes'' introduced by Ji and Feinberg in 2011 as the basis of their Higher Deficiency Algorithm (HDA) for mass action systems. For a CRN with only irreversible reactions and stoichiometric matrix $N$, the characteristic functions $\omega _r$ and $\omega _{r'}$ of reactions $r$ and $r'$ are in the same non-zero fundamental class $C_i$ if they are non-zero and pairwise dependent in the factor space ${\mathbb{R}^\mathscr{R}/({\text{Ker\ }} N)^{\perp}}$. Any reaction with $\omega _r= 0$ in the factor space is assigned to the zero fundamental class $C_0$. In the general case, a reversible reaction pair is also assigned to the same fundamental class. 

The first part of this paper studies the properties of the $\mathscr{F}$-decomposition, in particular, its
independence (i.e., the network's stoichiometric subspace is the direct sum of the subnetworks' stoichiometric subspaces) and its incidence-independence (i.e., the image of the network's incidence map is the direct sum of the incidence maps' images of the subnetworks). For deficiency zero networks, these two properties coincide. M. Feinberg established the essential relationship between independent decompositions and the set of positive equilibria of a network (recalled as Theorem \ref{feinberg:decom:thm}) in 1987 \cite{feinberg12}. A corresponding relationship between incidence-independent, weakly reversible decompositions and complex-balanced equilibria of a weakly reversible network (recalled as Theorem \ref{decomposition:thm:2}) was recently documented by Farinas et al. \cite{FML2018}. We derive necessary and sufficient conditions for the independence and incidence of $\mathscr{F}$-decompositions, and identify network classes where the $\mathscr{F}$-decomposition coincides with other known decompositions. 

In our previous work \cite{hernandez}, we showed that the HDA of Ji and Feinberg can be extended to any power-law kinetic system which has reactant-determined interactions (we denote the set by PL-RDK), i.e., the reactions branching from the same reactant complex have identical kinetic order vectors (or ``interactions''). By combining this extension with a method (called CF-RM) to transform a power-law kinetic system with non-reactant-determined interactions (denoted by PL-NDK) to a dynamically equivalent PL-RDK system, we developed a computational approach called the ``Multistationarity Algorithm'' (MSA) to determine multistationarity in a stoichiometric class for any power-law kinetic system.

The second part of the paper applies the results of the first part to improve the MSA for power-law kinetic systems. We show that for PL-NDK systems with an independent $\mathscr{F}$-decomposition, the extended HDA, can be directly applied just as in the case of a PL-RDK system. This is because the new transformation, denoted by CF-RI$_+$, preserves reversibility/ irreversibility of reactions, leading to identical multistationarity computations for such PL-NDK systems and their PL-RDK transforms via CF-RI$_+$. The subnetwork of Schmitz's global carbon cycle model, a PL-NDK system studied by Fortun et al. \cite{fortun2}, is used as a running example to illustrate this improvement of the MSA.

The paper is organized as follows: Section \ref{prelim} collects the fundamentals of chemical reaction networks and kinetic systems required for the later sections, including relevant results from decomposition theory.  After introducing the $\mathscr{F}$-decomposition and related constructs, Section \ref{orientations:decompositions} derives its basic properties, including bounds for the number of subnetworks and related necessary conditions for independence and incidence-independence. Network classes with independent or incidence-independent $\mathscr{F}$-decompositions are identified. In Section \ref{decomposition:types:network:prop}, Ji and Feinberg's characterization of $\mathscr{F}$-decomposition subnetworks is used to develop a classification of $\mathscr{F}$-decompositions. Bounds for the deficiency and other properties are then obtained for the three $\mathscr{F}$-decomposition types. The reaction reversibility/ irreversibility preserving transformation CF-RI$_+$ is introduced in Section \ref{sect:cfri:transform:method}. This forms the basis for the improvement of the Multistationarity Algorithm (MSA) derived in Section \ref{f:decomposition:cfri:trans}.  Conclusions and an outlook constitute Section \ref{sec:conclusion}. Tables of acronyms and frequently used symbols are provided in Appendix \ref{nomenclature:appendix}.

\section{Fundamentals of Chemical Reaction Networks and Kinetic Systems}
\label{prelim}
\indent In this section, we recall some fundamental notions about chemical reaction networks and chemical kinetic systems. These concepts are provided in \cite{arc_jose,feinberg}. Moreover, we present some important preliminaries on the decomposition theory which was introduced by Feinberg in \cite{feinberg12}.

\subsection{Fundamentals of Chemical Reaction Networks}

\begin{definition}
A {\bf chemical reaction network} (CRN) $\mathscr{N}$ is a triple $\left(\mathscr{S},\mathscr{C},\mathscr{R}\right)$ of nonempty finite sets where $\mathscr{S}$, $\mathscr{C}$, and $\mathscr{R}$ are the sets of $m$ species, $n$ complexes, and $r$ reactions, respectively, such that
$\left( {{C_i},{C_i}} \right) \notin \mathscr{R}$ for each $C_i \in \mathscr{C}$; and
for each $C_i \in \mathscr{C}$, there exists $C_j \in \mathscr{C}$ such that $\left( {{C_i},{C_j}} \right) \in \mathscr{R}$ or $\left( {{C_j},{C_i}} \right) \in \mathscr{R}$.
\end{definition}

\begin{definition}
The {\bf molecularity matrix}, denoted by $Y$, is an $m\times n$ matrix such that $Y_{ij}$ is the stoichiometric coefficient of species $X_i$ in complex $C_j$.
The {\bf incidence matrix}, denoted by $I_a$, is an $n\times r$ matrix such that 
$${\left( {{I_a}} \right)_{ij}} = \left\{ \begin{array}{rl}
 - 1&{\rm{ if \ }}{C_i}{\rm{ \ is \ in \ the\ reactant \ complex \ of \ reaction \ }}{R_j},\\
 1&{\rm{  if \ }}{C_i}{\rm{ \ is \ in \ the\ product \ complex \ of \ reaction \ }}{R_j},\\
0&{\rm{    otherwise}}.
\end{array} \right.$$
The {\bf stoichiometric matrix}, denoted by $N$, is the $m\times r$ matrix given by 
$N=YI_a$.
\end{definition}
Let $\mathscr{I}=\mathscr{S}, \mathscr{C}$ or $\mathscr{R}$. We denote the standard basis for $\mathbb{R}^\mathscr{I}$ by $\left\lbrace \omega_i \in \mathbb{R}^\mathscr{I} \mid i \in \mathscr{I} \right\rbrace$.

\begin{definition}
Let $\mathscr{N}=(\mathscr{S,C,R})$ be a CRN. The {\bf{incidence map}} $I_a : \mathbb{R}^\mathscr{R} \rightarrow \mathbb{R}^\mathscr{C}$ is the linear map such that for each reaction $r:C_i \rightarrow C_j \in \mathscr{R}$, the basis vector $\omega_r$ to the vector $\omega_{C_j}-\omega_{C_i} \in \mathscr{C}$.
\end{definition} 

\begin{definition}
The {\bf reaction vectors} for a given reaction network $\left(\mathscr{S},\mathscr{C},\mathscr{R}\right)$ are the elements of the set $\left\{{C_j} - {C_i} \in \mathbb{R}^\mathscr{S}|\left( {{C_i},{C_j}} \right) \in \mathscr{R}\right\}.$
\end{definition}

\begin{definition}
The {\bf stoichiometric subspace} of a reaction network $\left(\mathscr{S},\mathscr{C},\mathscr{R}\right)$, denoted by $S$, is the linear subspace of $\mathbb{R}^\mathscr{S}$ given by $S = span\left\{ {{C_j} - {C_i} \in \mathbb{R}^\mathscr{S}|\left( {{C_i},{C_j}} \right) \in \mathscr{R}} \right\}.$ The {\bf rank} of the network, denoted by $s$, is given by $s=\dim S$. The set $\left( {x + S} \right) \cap \mathbb{R}_{ \ge 0}^\mathscr{S}$ is said to be a {\bf stoichiometric compatibility class} of $x \in \mathbb{R}_{ \ge 0}^\mathscr{S}$.
\end{definition}

\begin{definition}
Two vectors $x, x^{*} \in {\mathbb{R}^\mathscr{S}}$ are {\bf stoichiometrically compatible} if $x-x^{*}$ is an element of the stoichiometric subspace $S$.
\end{definition}

We can view complexes as vertices and reactions as edges. With this, CRNs can be seen as graphs. At this point, if we are talking about geometric properties, {\bf vertices} are complexes and {\bf edges} are reactions. If there is a path between two vertices $C_i$ and $C_j$, then they are said to be {\bf connected}. If there is a directed path from vertex $C_i$ to vertex $C_j$ and vice versa, then they are said to be {\bf strongly connected}. If any two vertices of a subgraph are {\bf (strongly) connected}, then the subgraph is said to be a {\bf (strongly) connected component}. The (strong) connected components are precisely the {\bf (strong) linkage classes} of a CRN. The maximal strongly connected subgraphs where there are no edges from a complex in the subgraph to a complex outside the subgraph is said to be the {\bf terminal strong linkage classes}.
We denote the number of linkage classes and the number of strong linkage classes by $l$ and $sl$, respectively.
A CRN is said to be {\bf weakly reversible} if $sl=l$.

\begin{definition}
For a CRN, the {\bf deficiency} is given by $\delta=n-l-s$ where $n$ is the number of complexes, $l$ is the number of linkage classes, and $s$ is the dimension of the stoichiometric subspace $S$.
\end{definition}

\subsection{Fundamentals of Chemical Kinetic Systems}

\begin{definition}
A {\bf kinetics} $K$ for a reaction network $\left(\mathscr{S},\mathscr{C},\mathscr{R}\right)$ is an assignment to each reaction $r: y \to y' \in \mathscr{R}$ of a rate function ${K_r}:{\Omega _K} \to {\mathbb{R}_{ \ge 0}}$ such that $\mathbb{R}_{ > 0}^\mathscr{S} \subseteq {\Omega _K} \subseteq \mathbb{R}_{ \ge 0}^\mathscr{S}$, $c \wedge d \in {\Omega _K}$ if $c,d \in {\Omega _K}$, and ${K_r}\left( c \right) \ge 0$ for each $c \in {\Omega _K}$.
Furthermore, it satisfies the positivity property: supp $y$ $\subset$ supp $c$ if and only if $K_r(c)>0$.
The system $\left(\mathscr{S},\mathscr{C},\mathscr{R},K\right)$ is called a {\bf chemical kinetic system}.
\end{definition}

\begin{definition}
The {\bf species formation rate function} (SFRF) of a chemical kinetic system is given by $f\left( x \right) = NK(x)= \displaystyle \sum\limits_{{C_i} \to {C_j} \in \mathscr{R}} {{K_{{C_i} \to {C_j}}}\left( x \right)\left( {{C_j} - {C_i}} \right)}.$
\end{definition}
The ordinary differential equation (ODE) or dynamical system of a chemical kinetics system is $\dfrac{{dx}}{{dt}} = f\left( x \right)$. An {\bf equilibrium} or {\bf steady state} is a zero of $f$.

\begin{definition}
The {\bf set of positive equilibria} of a chemical kinetic system $\left(\mathscr{S},\mathscr{C},\mathscr{R},K\right)$ is given by ${E_ + }\left(\mathscr{S},\mathscr{C},\mathscr{R},K\right)= \left\{ {x \in \mathbb{R}^\mathscr{S}_{>0}|f\left( x \right) = 0} \right\}.$
\end{definition}

A CRN is said to admit {\bf multiple equilibria} if there exist positive rate constants such that the ODE system admits more than one stoichiometrically compatible equilibria.

\begin{definition}
A kinetics $K$ is {\bf complex factorizable} if, for $K(x) = k I_K(x)$,  the interaction map $I_K : \mathbb{R}^\mathscr{S} \to \mathbb{R}^\mathscr{R}$
factorizes via the space of complexes $\mathbb{R}^\mathscr{C} : I_K = I_k \circ \psi _K$ with  $\psi _K: \mathbb{R}^\mathscr{S} \to \mathbb{R}^\mathscr{C}$ as factor map and
$I_k = diag(k) \circ \rho'$ with $\rho' : \mathbb{R}^\mathscr{C} \to \mathbb{R}^\mathscr{R}$ assigning the value at a reactant complex to all its reactions. 

\end{definition}

\begin{definition}
A kinetics $K$ is a {\bf power-law kinetics} (PLK) if 
${K_i}\left( x \right) = {k_i}{{x^{{F_{i}}}}} $ for $i =1,...,r$ where ${k_i} \in {\mathbb{R}_{ > 0}}$ and ${F_{ij}} \in {\mathbb{R}}$. The power-law kinetics is defined by an $r \times m$ matrix $F$, called the {\bf kinetic order matrix} and a vector $k \in \mathbb{R}^\mathscr{R}$, called the {\bf rate vector}.
\end{definition}
If the kinetic order matrix is the transpose of the molecularity matrix, then the system becomes the well-known {\bf mass action kinetics (MAK)}.

\begin{definition}
A PLK system has {\bf reactant-determined kinetics} (of type PL-RDK) if for any two reactions $i, j$ with identical reactant complexes, the corresponding rows of kinetic orders in $F$ are identical, i.e., ${f_{ik}} = {f_{jk}}$ for $k = 1,2,...,m$. A PLK system has {\bf non-reactant-determined kinetics} (of type PL-NDK) if there exist two reactions with the same reactant complexes whose corresponding rows in $F$ are not identical.
\end{definition}

\subsection{Review of Decomposition Theory}
\label{sect:decomposition}

In this subsection, we recall some definitions and earlier results from the decomposition theory of chemical reaction networks. 

\begin{definition}
A {\bf decomposition} of $\mathscr{N}$ is a set of subnetworks $\{\mathscr{N}_1, \mathscr{N}_2,...,\mathscr{N}_k\}$ of $\mathscr{N}$ induced by a partition $\{\mathscr{R}_1, \mathscr{R}_2,...,\mathscr{R}_k\}$ of its reaction set $\mathscr{R}$. 
\end{definition}

We denote a decomposition with 
$\mathscr{N} = \mathscr{N}_1 \cup \mathscr{N}_2 \cup ... \cup \mathscr{N}_k$
since $\mathscr{N}$ is a union of the subnetworks in the sense of \cite{GHMS2018}. It also follows immediately that, for the corresponding stoichiometric subspaces, 
${S} = {S}_1 + {S}_2 + ... + {S}_k$. It is also useful to consider refinements and coarsenings of decompositions.

\begin{definition}
A network decomposition $\mathscr{N} = \mathscr{N}_1 \cup \mathscr{N}_2 \cup ... \cup \mathscr{N}_k$  is a {\bf refinement} of
$\mathscr{N} = {\mathscr{N}'}_1 \cup {\mathscr{N}'}_2 \cup ... \cup {\mathscr{N}'}_{k'}$ 
(and the latter a coarsening of the former) if it is induced by a refinement  
$\{\mathscr{R}_1, \mathscr{R}_2,...,\mathscr{R}_k\}$
of $\{{\mathscr{R}'}_1 \cup {\mathscr{R}'}_2 \cup ... \cup {\mathscr{R}'}_{k'}\}$, i.e., 
each ${\mathscr{R}}_i$ is contained in an ${\mathscr{R}'}_j$. 
\end{definition}

In \cite{feinberg12}, Feinberg introduced the important concept of independent decomposition.

\begin{definition}
A network decomposition $\mathscr{N} = \mathscr{N}_1 \cup \mathscr{N}_2 \cup ... \cup \mathscr{N}_k$  is {\bf independent} if its stoichiometric subspace is a direct sum of the subnetwork stoichiometric subspaces.
\end{definition}

In Lemma 1 of \cite{fortun2}, it was shown that for an independent decomposition, $$\delta \le \delta_1 +\delta_2 ... +\delta_k.$$

\begin{definition}
A decomposition $\mathscr{N} = \mathscr{N}_1 \cup \mathscr{N}_2 \cup ... \cup \mathscr{N}_k$ with $\mathscr{N}_i = (\mathscr{S}_i,\mathscr{C}_i,\mathscr{R}_i)$ is a {\bf $\mathscr{C}$-decomposition} if for each pair of distinct $i$ and $j$, $\mathscr{C}_i$ and $\mathscr{C}_j$ are disjoint.
\end{definition}

\begin{example}
Linkage classes form the primary example of a decomposition of a CRN. They are special in the sense that the reaction set partition inducing them is also a complex set partition. In \cite{FML2018}, the term ``$\mathscr{C}$-decomposition'' was introduced for such a decomposition and it was shown that any $\mathscr{C}$-decomposition is a coarsening of the linkage class decomposition.
\end{example}

For any decomposition, it also holds that 
$\text{Im } I_a = \text{Im } I_{a,1} + ... + \text{Im } I_{a,k}$, where $$\text{Im } I_{a,i} = I_a(\mathbb{R}^{{\mathscr{R}}_i}).$$

\begin{definition}
A decomposition is {\bf incidence-independent} if ${\rm{Im \ }} I_a$ is a direct sum of the ${\rm{Im \ }} I_{a,i}$. It is {\bf bi-independent} if it is both independent and incidence-independent.
\end{definition}

An equivalent formulation of showing incidence-independent is to satisfy $n - l = \sum {\left( {{n_i} - {l_i}} \right)}$, where $n_i$ is the number of complexes and $l_i$ is the number of linkage classes, in each subnetwork $i$.

In \cite{FML2018}, it was shown that for any incidence-independent decomposition, $\delta \ge \delta_1 +\delta_2 ... +\delta_k$ and that $\mathscr{C}$-decompositions form a subset of incidence-independent decompositions. Hence, independent linkage classes form the primary example of a bi-independent decomposition.

The following proposition is easily verified:
\begin{proposition}
A decomposition $\mathscr{N} = \mathscr{N}_1 \cup \mathscr{N}_2 \cup ... \cup \mathscr{N}_k$ is independent or incidence-independent and $\sum {{\delta _i} = \delta }$ iff $\mathscr{N} = \mathscr{N}_1 \cup \mathscr{N}_2 \cup ... \cup \mathscr{N}_k$ is bi-independent.
\end{proposition}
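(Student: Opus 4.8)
The plan is to reduce the whole statement to two nonnegative ``defect'' numbers measuring how far each of the two sums is from being direct, and then to observe that the hypothesis $\sum_i \delta_i = \delta$ says exactly that these two defects coincide. First I would record the relevant dimension counts. For the whole network $\dim S = s$ and $\dim \operatorname{Im} I_a = n - l$, so $\delta = (n-l) - s$, and likewise $\dim S_i = s_i$, $\dim \operatorname{Im} I_{a,i} = n_i - l_i$, so $\delta_i = (n_i - l_i) - s_i$ for each subnetwork. Next I would invoke the two identities that hold for \emph{every} decomposition, stated earlier in the excerpt, namely $S = S_1 + \cdots + S_k$ and $\operatorname{Im} I_a = \operatorname{Im} I_{a,1} + \cdots + \operatorname{Im} I_{a,k}$. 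Since the dimension of a sum of subspaces never exceeds the sum of the dimensions, with equality precisely when the sum is direct, I set
\[
B := \Big(\textstyle\sum_i s_i\Big) - s \ge 0, \qquad A := \Big(\textstyle\sum_i (n_i - l_i)\Big) - (n - l) \ge 0,
\]
where $B = 0$ if and only if the decomposition is independent and $A = 0$ if and only if it is incidence-independent.

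The key step is then a single bookkeeping identity. Substituting the deficiency formulas and regrouping the sums gives
\[
\sum_i \delta_i - \delta = \Big(\textstyle\sum_i (n_i - l_i) - (n-l)\Big) - \Big(\textstyle\sum_i s_i - s\Big) = A - B,
\]
so that the condition $\sum_i \delta_i = \delta$ is equivalent to $A = B$. (This is also consistent with the two inequalities quoted from \cite{FML2018} and \cite{fortun2}: $A=0$ forces $\sum_i\delta_i-\delta=-B\le 0$, and $B=0$ forces $\sum_i\delta_i-\delta=A\ge 0$.)

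The biconditional now follows by elementary logic. For the backward direction, bi-independence means $A = B = 0$, whence $\sum_i \delta_i = \delta$ and the decomposition is in particular independent, so the left-hand condition holds. For the forward direction, assume $\sum_i \delta_i = \delta$ (i.e. $A = B$) together with at least one of the two independence properties: if it is independent then $B = 0$, forcing $A = B = 0$; if it is incidence-independent then $A = 0$, forcing $B = A = 0$. In either case both defects vanish, which is exactly bi-independence. The only real obstacle here is the bookkeeping — justifying $A, B \ge 0$ via subadditivity of dimension under sums of subspaces and correctly matching each equality case to its independence notion; once the identity $\sum_i \delta_i - \delta = A - B$ is in hand, the disjunction in the hypothesis collapses automatically, since pinning either defect to zero drags the other to zero through $A = B$.
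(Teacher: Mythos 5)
Your proof is correct and is precisely the intended verification: the paper offers no written argument (it only remarks the proposition is ``easily verified''), and the dimension count $\sum_i \delta_i - \delta = A - B$ with $A, B \ge 0$ from subadditivity, combined with the paper's stated facts that $S = S_1 + \cdots + S_k$ and $\operatorname{Im} I_a = \operatorname{Im} I_{a,1} + \cdots + \operatorname{Im} I_{a,k}$ hold for every decomposition, is exactly the bookkeeping the authors have in mind. Your parsing of the hypothesis as ``(independent or incidence-independent) and $\sum_i \delta_i = \delta$'' is also the correct reading, since either disjunct alone pins one defect to zero and the equality $A = B$ then drags the other to zero.
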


In particular, for a zero deficiency network, independence and incidence-independence are equivalent. 

Feinberg established the following basic relation between an independent decomposition and the set of positive equilibria of a kinetics on the network:

\begin{theorem} (Feinberg Decomposition Theorem \cite{feinberg12})
\label{feinberg:decom:thm}
Let $P(\mathscr{R})=\{\mathscr{R}_1, \mathscr{R}_2,...,\mathscr{R}_k\}$ be a partition of a CRN $\mathscr{N}$ and let $K$
be a kinetics on $\mathscr{N}$. If $\mathscr{N} = \mathscr{N}_1 \cup \mathscr{N}_2 \cup ... \cup \mathscr{N}_k$ is the network decomposition of $P(\mathscr{R})$ and ${E_ + }\left(\mathscr{N}_i,{K}_i\right)= \left\{ {x \in \mathbb{R}^\mathscr{S}_{>0}|N_iK_i(x) = 0} \right\}$ then
\[{E_ + }\left(\mathscr{N}_1,K_1\right) \cap {E_ + }\left(\mathscr{N}_2,K_2\right) \cap ... \cap {E_ + }\left(\mathscr{N}_k,K_k\right) \subseteq  {E_ + }\left(\mathscr{N},K\right).\]
If the network decomposition is independent, then equality holds.
\end{theorem}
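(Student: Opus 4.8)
The plan is to reduce the statement to the additive structure of the species formation rate function, invoking the direct-sum property supplied by independence only for the equality half. Since $\{\mathscr{R}_1,\ldots,\mathscr{R}_k\}$ partitions $\mathscr{R}$, the defining sum for $f$ splits over the subnetworks, and I would first record the identity
\[
f(x)=NK(x)=\sum_{y\to y'\in\mathscr{R}}K_{y\to y'}(x)(y'-y)=\sum_{i=1}^k f_i(x),
\]
where $f_i(x)=N_iK_i(x)$ is the SFRF of $\mathscr{N}_i$; this holds for all $x$ because each reaction belongs to exactly one block $\mathscr{R}_i$.

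For the inclusion I would argue directly: if $x$ lies in $E_+(\mathscr{N}_1,K_1)\cap\cdots\cap E_+(\mathscr{N}_k,K_k)$, then every $f_i(x)=0$, so $f(x)=\sum_i f_i(x)=0$ and $x\in E_+(\mathscr{N},K)$. This direction uses only the partition, not independence.

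For the reverse inclusion under independence, the key step is to note that $f_i(x)=N_iK_i(x)$ is a combination of the reaction vectors of $\mathscr{N}_i$ and therefore lies in $S_i$ for every $x$. Independence gives $S=S_1\oplus\cdots\oplus S_k$, so if $x\in E_+(\mathscr{N},K)$ the relation
\[
0=f(x)=f_1(x)+\cdots+f_k(x),\qquad f_i(x)\in S_i,
\]
writes the zero vector of $S$ as a sum of components from the summands $S_i$. Uniqueness of representation in a direct sum then forces each $f_i(x)=0$, i.e.\ $x\in E_+(\mathscr{N}_i,K_i)$ for all $i$; combined with the inclusion already shown, this yields equality.

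The argument is short and I do not anticipate a genuine obstacle. The one point meriting care is the containment $f_i(x)\in S_i$, since it is precisely what lets independence upgrade the single equation $f(x)=0$ into the $k$ separate equations $f_i(x)=0$; the remainder is bookkeeping on the partition together with the standard uniqueness property of direct sums.
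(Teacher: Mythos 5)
Your proof is correct and is precisely the standard argument for Feinberg's decomposition theorem; the paper itself gives no proof, citing \cite{feinberg12} instead, and your splitting of the species formation rate function over the partition plus the direct-sum uniqueness step is exactly the argument that reference uses. The one point you flag as needing care, $f_i(x)\in S_i$, is indeed the crux and you justify it correctly.
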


The analogue of Feinberg's 1987 result for incidence-independent decompositions and complex-balanced equilibria is shown in \cite{FML2018}:

\begin{theorem}
\label{decomposition:thm:2}
Let $\mathscr{N}$ be a network, $K$ any kinetics and $\mathscr{N} = \mathscr{N}_1 \cup \mathscr{N}_2 \cup ... \cup \mathscr{N}_k$ an incidence-independent decomposition of weakly reversible subnetworks. Then $\mathscr{N}$ is weakly reversible and 
\begin{itemize}
\item[i.] ${Z_ + }\left( {\mathscr{N} ,K} \right) =  \cap {Z_ + }\left( {{\mathscr{N} _i},K} \right)$ for each subnetwork ${\mathscr{N} _i}$.
\item[ii.] If ${Z_ + }\left( {\mathscr{N},K} \right) \ne \emptyset $ then ${Z_ + }\left( {{\mathscr{N}_i},K} \right) \ne \emptyset $ for each subnetwork ${\mathscr{N} _i}$.
\item[iii.] If the decomposition is a $\mathscr{C}$-decomposition and $K$ a complex factorizable kinetics then  ${Z_ + }\left( {{\mathscr{N}_i},K} \right) \ne \emptyset $ for each subnetwork ${\mathscr{N} _i}$ implies that ${Z_ + }\left( {{\mathscr{N}},K} \right) \ne \emptyset $.
\end{itemize}
\end{theorem}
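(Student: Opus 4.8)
The plan is to read the complex-balancing condition through the incidence map. Recall that $Z_+(\mathscr{N},K)$, the set of complex-balanced equilibria, is $\{x\in\mathbb{R}_{>0}^{\mathscr{S}}\mid I_aK(x)=0\}$, and let $K_i$ denote the restriction of $K$ to $\mathscr{R}_i$, so that $x\in Z_+(\mathscr{N}_i,K)$ means exactly $I_{a,i}K_i(x)=0$. Since $\{\mathscr{R}_i\}$ partitions $\mathscr{R}$ and $I_a$ restricted to $\mathbb{R}^{\mathscr{R}_i}$ is $I_{a,i}$, the identity
\[
I_aK(x)=\sum_{i=1}^{k}I_{a,i}K_i(x),\qquad I_{a,i}K_i(x)\in\operatorname{Im}I_{a,i},
\]
drives the whole argument.

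First I would settle weak reversibility, which needs only that the subnetworks are weakly reversible. Each reaction of $\mathscr{N}$ lies in some $\mathscr{R}_i$ and hence, by weak reversibility of $\mathscr{N}_i$, lies on a directed cycle of $\mathscr{N}_i\subseteq\mathscr{N}$. Thus every reaction of $\mathscr{N}$ lies on a directed cycle, which forces each linkage class of $\mathscr{N}$ to be strongly connected; that is, $\mathscr{N}$ is weakly reversible.

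For (i), the inclusion $\bigcap_iZ_+(\mathscr{N}_i,K)\subseteq Z_+(\mathscr{N},K)$ is immediate from the displayed identity, as every summand then vanishes. For the reverse inclusion I would use incidence-independence: if $I_aK(x)=0$, then $\sum_iI_{a,i}K_i(x)=0$ is a vanishing sum whose $i$-th term lies in the $i$-th summand of the direct sum $\operatorname{Im}I_a=\bigoplus_i\operatorname{Im}I_{a,i}$, so each term vanishes and $x\in Z_+(\mathscr{N}_i,K)$ for all $i$. This gives (i), and (ii) follows at once, since any point of $Z_+(\mathscr{N},K)=\bigcap_iZ_+(\mathscr{N}_i,K)$ exhibits each factor as nonempty.

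The substantive part is (iii), the converse of (ii). Here a $\mathscr{C}$-decomposition is in particular incidence-independent, so (i) already gives $Z_+(\mathscr{N},K)=\bigcap_iZ_+(\mathscr{N}_i,K)$; the task is to upgrade \emph{each factor nonempty} to \emph{the intersection is nonempty}. I would bring in both extra hypotheses. Disjointness of the $\mathscr{C}_i$ means no reaction joins distinct classes, so the endomorphism $A_k:=I_a\,\mathrm{diag}(k)\,\rho'$ of $\mathbb{R}^{\mathscr{C}}$ is block diagonal, $A_k=\bigoplus_iA_{k,i}$. Complex factorizability gives $K(x)=\mathrm{diag}(k)\,\rho'\,\psi_K(x)$, whence $x\in Z_+(\mathscr{N},K)$ iff $A_k\psi_K(x)=0$, which decouples into the per-class conditions $A_{k,i}\,\psi_K(x)|_{\mathscr{C}_i}=0$; the $i$-th of these is precisely $x\in Z_+(\mathscr{N}_i,K)$. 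The hypotheses furnish, for each $i$, a point $x^{(i)}$ with $\psi_K(x^{(i)})|_{\mathscr{C}_i}\in\ker A_{k,i}$, and the goal is a single $x$ meeting all $k$ conditions simultaneously. I expect the simultaneous realization to be the main obstacle: the blocks $\psi_K(x)|_{\mathscr{C}_i}$ are not independent parameters but are coupled through the common argument $x$, so one cannot simply paste the $x^{(i)}$ together. The plan to overcome it is to exploit that the $\mathscr{C}_i$ are disjoint, so that the target set factors as $\prod_i(\ker A_{k,i}\cap\mathbb{R}_{>0}^{\mathscr{C}_i})$ inside $\mathbb{R}_{>0}^{\mathscr{C}}$, with each factor nonempty and containing a positive element by weak reversibility of $\mathscr{N}_i$, and then to use the monomial (log-linear) form of the complex factorizable factor map $\psi_K$ to show that its image meets this product. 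This last step is where complex factorizability and the $\mathscr{C}$-decomposition structure are genuinely needed, and is where I would concentrate the effort.
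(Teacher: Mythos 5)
This theorem is one the paper only \emph{recalls} from \cite{FML2018} (which is listed as ``in preparation''); no proof is given in the present paper, so there is no internal proof to compare against. Judged on its own terms, your treatment of weak reversibility and of parts (i) and (ii) is correct and complete: the identity $I_aK(x)=\sum_i I_{a,i}K_i(x)$ together with the direct-sum hypothesis $\operatorname{Im}I_a=\bigoplus_i\operatorname{Im}I_{a,i}$ is exactly the right mechanism, and the ``every reaction lies on a directed cycle'' argument for weak reversibility is fine.

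Part (iii), however, is not proven: you set up the block decomposition $A_k=\bigoplus_i A_{k,i}$ and then explicitly defer the decisive step (``this is where I would concentrate the effort''), namely showing that the image of $\psi_K$ meets $\prod_i\bigl(\ker A_{k,i}\cap\mathbb{R}_{>0}^{\mathscr{C}_i}\bigr)$. Two concrete problems with the deferred plan. First, the hypothesis is only that $K$ is complex factorizable; nothing in the paper's definition makes $\psi_K$ monomial or log-linear, so the surjectivity-type argument you have in mind is not available at this level of generality. Second, and more seriously, the coupling through the common argument $x$ that you correctly flag as ``the main obstacle'' is a genuine obstruction, not a technicality: for the $\mathscr{C}$-decomposition of $A\rightleftharpoons B$, $2A\rightleftharpoons 2B$ into its two linkage classes under mass action, each subnetwork is weakly reversible of deficiency zero, hence $Z_+(\mathscr{N}_i,K)\ne\emptyset$ for all rate constants, yet $Z_+(\mathscr{N},K)=\{k_1a=k_2b\}\cap\{k_3a^2=k_4b^2\}$ is empty for generic rate constants. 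So the image of $\psi_K$ does \emph{not} in general meet the product of the per-class kernels, and any correct proof of (iii) must invoke structure beyond what you have assembled (or the statement as transcribed needs additional hypotheses, e.g.\ on shared species or reactant complexes). As it stands, the proposal establishes (i) and (ii) but leaves (iii) open, and the route sketched for (iii) cannot be completed in the stated generality.
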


\begin{definition}
The network decomposition $\mathscr{N}=\mathscr{N'} \cup \mathscr{N''}$ is said to be {\bf trivial} if $\mathscr{N'}$ is a subnetwork whose stoichiometric subspace coincides with that of $\mathscr{N}$.
\end{definition}





\section{Orientations, and $\mathscr{O}$-, $\mathscr{P}$-, and $\mathscr{F}$-decompositions of CRNs}
\label{orientations:decompositions}
In this section, we review the concepts and properties underlying HDA and its extension to PL-RDK systems in the context of decomposition theory.

\subsection{Review of Orientations, $\mathscr{O}$-, $\mathscr{P}$-, and $\mathscr{F}$-decompositions}
The main references for this subsection are \cite{hernandez} and \cite{ji}.

\begin{definition} A subset $\mathscr{O}$ of $\mathscr{R}$ is said to be an {\bf orientation} if for every reaction $y \to y' \in \mathscr{R}$, either $y \to y' \in \mathscr{O}$ or $y' \to y \in \mathscr{O}$, but not both.
\end{definition}

For an orientation $\mathscr{O}$, we define a linear map ${L_\mathscr{O}}:{\mathbb {R}^\mathscr{O}} \to S$ such that
\[{L_\mathscr{O}}(\alpha)= \sum\limits_{y \to y' \in \mathscr{O}} {{\alpha _{y \to y'}}\left( {y' - y} \right)}.\]

Let ${{r}_{\text{irr}}}$ and ${{r}_{\text{rev}}}$ be the number of irreversible reactions and reversible reaction pairs in $\mathscr{N}$, respectively. Clearly, $r={{r}_{\text{irr}}}+2{{r}_{\text{rev}}}$. In the succeeding disscussion, we will be using the notation $\mathscr{N}_\mathscr{O}$ to denote the subnetwork of $\mathscr{N}$ with respect to the orientation $\mathscr{O}$.
The following proposition collects some basic properties of orientations:

\begin{proposition}
Let $\mathscr{N}$ be a CRN and $\mathscr{O}_\mathscr{N}$ be the set of orientations of $\mathscr{N}$. Then
\begin{itemize}
\item[i.] $\left| {\mathscr{O}_\mathscr{N}} \right| = {2^{r''}}$ where $r''=r_{\rm{rev}}$, and
\item[ii.] the map $P:{\mathscr{O}_\mathscr{N}} \to \{\mathscr{N}_\mathscr{O}\}$ is bijective. Hence, there are ${2^{r''}}$
$\mathscr{O}$-subnetworks in $\mathscr{N}$.
\end{itemize}
\end{proposition}

Each orientation $\mathscr{O}$ defines a partition of $\mathscr{N}$ into $\mathscr{O}$ and its complement $\mathscr{O}'$, which generates the following decomposition:

\begin{definition}
For an orientation $\mathscr{O}$ on $\mathscr{N}$, the {\bf $\mathscr{O}$-decomposition} of $\mathscr{N}$ consists of the subnetworks $\mathscr{N}_\mathscr{O}$ and $\mathscr{N}_{\mathscr{O}'}$, i.e., $\mathscr{N}=\mathscr{N}_\mathscr{O} \cup \mathscr{N}_\mathscr{O'}$.
\end{definition}

A basic property of an $\mathscr{O}$-decomposition is the following proposition.
\begin{proposition}
Let $\mathscr{N}=\left(\mathscr{S},\mathscr{C},\mathscr{R}\right)$ be a CRN and $\mathscr{O}$ be an orientation. Then the $\mathscr{O}$-decomposition is a trivial decomposition.
\label{trivial_decomposition}
\end{proposition}
\begin{proof}
Let $\mathscr{O}$ be an orientation and $\mathscr{O}'=\mathscr{R} \backslash \mathscr{O}$. Then $\mathscr{R} = \mathscr{O} \cup \mathscr{O}'$ is a partition with corresponding network decomposition $\mathscr{N} = {\mathscr{N}_\mathscr{O}} \cup {\mathscr{N}_\mathscr{O'}}$. Note that $S=S_\mathscr{O} + S_\mathscr{O'} = S_\mathscr{O}$. Hence, the stoichiometric subspace of ${\mathscr{N}_\mathscr{O}}$ is the same as that of the whole stoichiometric subspace of $\mathscr{N}$.
\end{proof}

\begin{corollary}
For any CRN $\mathscr{N}$ and orientation $\mathscr{O}$, $\delta \left( \mathscr{N} \right) = \delta \left( {{\mathscr{N}_\mathscr{O}}} \right)$.
\end{corollary}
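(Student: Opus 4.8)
The plan is to apply the definition of deficiency directly, comparing the three integers $n$, $l$, and $s$ between $\mathscr{N}$ and its oriented subnetwork $\mathscr{N}_\mathscr{O}$. Writing $\delta(\mathscr{N}) = n - l - s$ and $\delta(\mathscr{N}_\mathscr{O}) = n_\mathscr{O} - l_\mathscr{O} - s_\mathscr{O}$, it suffices to show that each of these three invariants is left unchanged when we pass to the orientation.

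First I would dispose of the rank. By Proposition \ref{trivial_decomposition} the $\mathscr{O}$-decomposition is trivial, so $S_\mathscr{O} = S$ and hence $s_\mathscr{O} = s$. This is exactly what was established in the preceding proof, so no further work is needed here and the corollary is genuinely a corollary of that proposition.

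Next I would argue that the complex count and the linkage class count are preserved. An orientation retains, for each reversible pair $y \leftrightarrow y'$, exactly one of the two reactions, while retaining every irreversible reaction. In either case both endpoints $y$ and $y'$ remain incident to some reaction of $\mathscr{O}$, so every complex of $\mathscr{N}$ still appears in $\mathscr{N}_\mathscr{O}$, giving $n_\mathscr{O} = n$. For the linkage classes I would observe that the underlying undirected graph is unaltered: discarding one direction of a reversible pair does not remove the corresponding undirected edge, so the connected components, and therefore their number, are the same; thus $l_\mathscr{O} = l$. Combining the three equalities yields $\delta(\mathscr{N}_\mathscr{O}) = n - l - s = \delta(\mathscr{N})$.

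The only point that demands any care is the pair of claims $n_\mathscr{O}=n$ and $l_\mathscr{O}=l$, both of which rest on the convention that the subnetwork $\mathscr{N}_\mathscr{O}$ inherits precisely those complexes incident to its reactions. Once that convention is made explicit, each claim follows immediately from the fact that an orientation touches every reaction of $\mathscr{N}$ in at least one direction, so I do not anticipate any substantive obstacle in completing the argument.
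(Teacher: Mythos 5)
Your proof is correct and follows the route the paper intends: the corollary is stated immediately after Proposition \ref{trivial_decomposition}, which supplies $s_\mathscr{O}=s$, and your observations that every complex survives the passage to $\mathscr{N}_\mathscr{O}$ and that the underlying undirected graph (hence the linkage class count) is unchanged are exactly the details the paper leaves implicit. No gaps.
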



We now review the important concept of ``equivalence classes'' from \cite{ji}.
Let $\left\{ {{v^l}} \right\}_{l = 1}^d$ be a basis for $Ker{L_\mathscr{O}}$.
If for $y \to y' \in \mathscr{O}$, $v_{y \to y'}^l =0$ for all $1 \le l \le d$ then the reaction $y \to y' $ belongs to the zeroth equivalence class $P_0$.
For $y \to y', {\overline y  \to \overline y '} \in \mathscr{O} \backslash P_0$, if there exists $\alpha \ne 0$ such that $v_{y \to y'}^l = \alpha v_{\overline y  \to \overline y '}^l$ for all $1 \le l \le d$, then the two reactions are in the same equivalence class denoted by $P_i$, $i \ne 0$.

\begin{definition}
The {\bf $\mathscr{P}$-decomposition} of the $\mathscr{O}$-subnetwork $\mathscr{N}_\mathscr{O}$ is the decomposition induced by the partition of $\mathscr{O}$ into equivalence classes. 
\end{definition}

\begin{proposition}
Let $\mathscr{O}_\mathscr{N}$ be the set of orientations of a CRN.
\begin{itemize}
\item[i.] The map $P_d:\mathscr{O}_\mathscr{N} \to \{\mathscr{P}{\text{-decompositions}}\}$ is bijective. For any two orientations $\mathscr{O}$ and $\mathscr{O}'$, $\dim {\rm Ker \ } L_\mathscr{O}=\dim {\rm Ker \ } L_\mathscr{O'}={{r}_{\rm irr }}$ + ${{r}_{\rm{rev}}} -s$. Hence, the $\mathscr{P}$-decomposition of $\mathscr{N}_\mathscr{O}$ has the same number of subnetworks as that of $\mathscr{N}_{\mathscr{O}'}$ (denoted by $w$ if $P_0 = \emptyset$ and $w+1$ if $P_0 \ne \emptyset$).
\item[ii.] For the $i^{\text th}$ subnetworks of the $\mathscr{P}$-decompositions of $\mathscr{N}_\mathscr{O}$ and $\mathscr{N}_{\mathscr{O}'}$, respectively, the stoichiometric subspaces and the incidence map images coincide, i.e., 
$S_i=S_{i}^{'}$, and ${\rm Im } \ I_{a,i}={\rm{Im}}\ I_{a,i}^{'}$. 
\end{itemize}
\end{proposition}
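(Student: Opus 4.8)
The plan is to compare two orientations $\mathscr{O}$ and $\mathscr{O}'$ through the natural label-preserving bijection between them, tracking the signs that arise when a reaction is reversed. Throughout I write each reaction by its \emph{underlying label} $\rho$, meaning either an irreversible reaction or a reversible pair; every orientation selects exactly one direction for each $\rho$, and I set $\epsilon_\rho = +1$ if $\mathscr{O}$ and $\mathscr{O}'$ select the same direction and $\epsilon_\rho = -1$ otherwise (the latter only possible for reversible $\rho$).

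I would begin with the dimension count, the easiest ingredient. For any orientation $\mathscr{O}$, the image of $L_\mathscr{O}$ is $\mathrm{span}\{y' - y \mid y \to y' \in \mathscr{O}\}$; since reversing a reversible pair merely negates its reaction vector and every irreversible reaction lies in every orientation, this span equals $S$ regardless of $\mathscr{O}$. As $L_\mathscr{O}$ is defined on $\mathbb{R}^\mathscr{O}$ with $|\mathscr{O}| = r_{\rm irr} + r_{\rm rev}$, the rank-nullity theorem yields $\dim {\rm Ker}\, L_\mathscr{O} = r_{\rm irr} + r_{\rm rev} - s$, independent of the orientation. I would then note that the equivalence-class partition is independent of the chosen basis $\{v^l\}$ of ${\rm Ker}\, L_\mathscr{O}$: collecting the $v^l$ as columns of a matrix $V$, the datum attached to $\rho$ is its row of $V$, and a change of basis replaces $V$ by $VM$ with $M$ invertible, which preserves both the vanishing of a row and the proportionality of two rows (equivalently, the row of $\rho$ is the coordinate functional $v \mapsto v_\rho$ restricted to ${\rm Ker}\, L_\mathscr{O}$, a basis-free object). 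Together with the bijectivity of $P : \mathscr{O}_\mathscr{N} \to \{\mathscr{N}_\mathscr{O}\}$ from the preceding proposition, this makes $P_d$ a well-defined bijection.

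The core step, where I expect essentially all of the bookkeeping, is the comparison of the two $\mathscr{P}$-decompositions. The reaction vector of $\rho$ in $\mathscr{O}'$ is $\epsilon_\rho$ times that in $\mathscr{O}$, so defining the diagonal isomorphism $\Phi : \mathbb{R}^\mathscr{O} \to \mathbb{R}^{\mathscr{O}'}$ by $(\Phi \alpha)_\rho = \epsilon_\rho \alpha_\rho$ and using $\epsilon_\rho^2 = 1$ gives $L_{\mathscr{O}'} \circ \Phi = L_\mathscr{O}$ by a direct check. Hence $\Phi$ carries ${\rm Ker}\, L_\mathscr{O}$ isomorphically onto ${\rm Ker}\, L_{\mathscr{O}'}$ and a basis to a basis, and under $\Phi$ the row of $\rho$ is rescaled only by $\epsilon_\rho = \pm 1$. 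Consequently vanishing rows map to vanishing rows and proportional rows to proportional rows, so the equivalence classes of $\mathscr{O}$ correspond bijectively to those of $\mathscr{O}'$: one gets $P_0 = \emptyset$ iff $P_0' = \emptyset$ and a matching $P_i \leftrightarrow P_i'$ of the nonzero classes, whence the two decompositions have the same number of subnetworks.

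Finally, for part (ii), the matched subnetworks $\mathscr{N}_i$ and $\mathscr{N}_i'$ consist of the reactions $\rho \in P_i$ taken in the two orientations. Each reaction vector of $\mathscr{N}_i'$ is $\epsilon_\rho \in \{+1, -1\}$ times the corresponding reaction vector of $\mathscr{N}_i$, and each incidence vector $\omega_{C_j} - \omega_{C_i}$ is likewise negated under reversal; since scaling spanning vectors by $\pm 1$ leaves their span unchanged, I conclude $S_i = S_i'$ and ${\rm Im}\, I_{a,i} = {\rm Im}\, I_{a,i}'$. The only genuine obstacle is the sign-tracking in the core step; everything else is routine linear algebra.
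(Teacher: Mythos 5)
Your proof is correct and follows essentially the same route as the paper: rank--nullity for the dimension count (after observing that $\operatorname{Im} L_{\mathscr{O}} = S$ for every orientation), and sign-tracking under reversal of reversible pairs for the equality of the spans $S_i = S_i'$ and $\operatorname{Im} I_{a,i} = \operatorname{Im} I_{a,i}'$ in part (ii). In fact your explicit sign isomorphism $\Phi$ with $L_{\mathscr{O}'}\circ\Phi=L_{\mathscr{O}}$ supplies a step the paper's proof elides: the paper infers ``same number of subnetworks'' directly from the equality of kernel dimensions, which by itself does not produce a matching of equivalence classes, whereas your $\Phi$ carries a basis of $\operatorname{Ker} L_{\mathscr{O}}$ to one of $\operatorname{Ker} L_{\mathscr{O}'}$ rescaling each row by $\pm 1$, hence gives the required bijection $P_i \leftrightarrow P_i'$ (and your remark on basis-independence of the partition, needed for $P_d$ to be well defined, is likewise absent from the paper).
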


\begin{proof}
For (i), we note that $\dim {\rm Ker\ } L_\mathscr{O}={{r}_{\text{irr}}}$ + ${{r}_{\text{rev}}} -s$ follows immediately from the Rank-Nullity Theorem. For (ii), in the stoichiometric subspaces $S_i$ and $S_{i}^{'}$, the reaction vectors from irreversible reactions are identical, and those from reversible pairs are either identical or the negative of each other. Hence, the stoichiometric subspaces coincide. Similarly, an element of ${\rm Im } \ I_{a,i}$, say $z=\sum {{\alpha _i}\left( {{\omega _{{y_i}}}' - {\omega _{{y_i}}}} \right)}  + \sum {{\beta _i}\left( {{\omega _{{y_i}}}' - {\omega _{{y_i}}}} \right)} $, where the first sum is over identical reactions (irreversible and identical pair reaction) and the second from the converse pair reaction. Rewriting the latter as $ - \sum {{\beta _i}\left( {{\omega _{{y_i}}} - {\omega _{{y_i}}}'} \right)} $ shows that it belongs to ${\rm{Im}}\ I_{a,i}^{'}$, and vice versa.
\end{proof}

We can now introduce the central concept of ``fundamental classes'', which is the basis of the Higher Deficiency Algorithm of Ji and Feinberg. The reactions $y \to y'$ and $\overline y  \to \overline y '$ in $\mathscr{R}$ belong to the same {\bf fundamental class} if at least one of the following is satisfied \cite{ji}.
\begin{itemize}
\item[i.] $y \to y'$ and $\overline y  \to \overline y '$ are the same reaction.
\item[ii.] $y \to y'$ and $\overline y  \to \overline y '$ are reversible pair.
\item[iii.] Either $y \to y'$ or $y' \to y$, and either $\overline y  \to \overline y '$ or $\overline y'  \to \overline y $ are in the same equivalence class on $\mathscr{O}$.
\end{itemize}

We can easily see that the orientation $\mathscr{O}$ is partitioned into equivalence classes while the reaction set $\mathscr{R}$ is partitioned into fundamental classes.

\begin{definition}
The {\bf $\mathscr{F}$-decomposition} of $\mathscr{N}$ is the decomposition generated by the partition of $\mathscr{R}$ into fundamental classes.
\end{definition}

\begin{proposition}
Any $\mathscr{P}$-decomposition generates the (unique) $\mathscr{F}$-decomposition of $\mathscr{N}$. 
\end{proposition}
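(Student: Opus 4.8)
The plan is to split the statement into the two assertions hidden in it: first, that a fixed $\mathscr{P}$-decomposition really does determine an $\mathscr{F}$-decomposition in a canonical way, and second, that this $\mathscr{F}$-decomposition is the same no matter which orientation (equivalently, which $\mathscr{P}$-decomposition) we start from, which is the force of the word ``unique.'' The first assertion is essentially unwinding the definition; the second is where the real content lies.

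For the generation step, I would start from an orientation $\mathscr{O}$ with its equivalence classes $\{P_0, P_1, \ldots, P_w\}$ partitioning $\mathscr{O}$, and define for each $i$ the set $C_i \subseteq \mathscr{R}$ consisting of every reaction in $P_i$ together with the reverse $y' \to y$ of each reversible reaction $y \to y' \in P_i$. Conditions (ii) and (iii) in the definition of fundamental class say precisely that two reactions share a fundamental class exactly when their chosen orientation representatives share an equivalence class, so the $C_i$ are exactly the fundamental classes, they satisfy $C_i \cap \mathscr{O} = P_i$, and $\{C_0, \ldots, C_w\}$ is a partition of $\mathscr{R}$ (each reaction of $\mathscr{O}$ lies in a unique $P_i$, and each reversed reaction is attached to the unique class containing its representative). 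This exhibits the $\mathscr{F}$-decomposition attached to $\mathscr{O}$.

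For uniqueness it suffices to show any two orientations yield the same partition of $\mathscr{R}$ into fundamental classes. Since there are $2^{r_{\text{rev}}}$ orientations, obtained from one another by choosing a direction for each reversible pair, any two are joined by a finite sequence of single reversals, so I would reduce to the case where $\mathscr{O}'$ arises from $\mathscr{O}$ by reversing one reversible reaction $r : y \to y'$ to $r^{\text{rev}} : y' \to y$. Identifying $\mathbb{R}^{\mathscr{O}}$ with $\mathbb{R}^{\mathscr{O}'}$ by matching the coordinate of $r$ with that of $r^{\text{rev}}$ and fixing all others, let $D$ be the diagonal operator equal to $-1$ in the flipped coordinate and $+1$ elsewhere. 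Because the columns of $N$ indexed by $\mathscr{O}$ and by $\mathscr{O}'$ agree up to a sign in the flipped coordinate, the defining matrices satisfy $M_{\mathscr{O}'} = M_{\mathscr{O}} D$, hence $\text{Ker } L_{\mathscr{O}'} = D(\text{Ker } L_{\mathscr{O}})$. I would then rephrase the equivalence relation invariantly: two oriented reactions lie in the same nonzero equivalence class iff the coordinate functionals $v \mapsto v_r$ and $v \mapsto v_{r'}$, restricted to $\text{Ker } L_{\mathscr{O}}$, are nonzero and proportional. For $w = Dv \in \text{Ker } L_{\mathscr{O}'}$ we have $w_r = D_r v_r$ with $D_r \neq 0$, so passing to $\mathscr{O}'$ merely rescales each such functional by a nonzero constant; this preserves both proportionality and nonvanishing (and $w_r = 0 \Leftrightarrow v_r = 0$ keeps the zeroth class fixed). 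The equivalence classes of $\mathscr{O}'$ therefore coincide with those of $\mathscr{O}$ under the identification, and closing up by reverse reactions as above gives the identical partition of $\mathscr{R}$.

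The hard part will be the bookkeeping in the uniqueness step rather than any deep inequality: pinning down the canonical identification of $\mathbb{R}^{\mathscr{O}}$ with $\mathbb{R}^{\mathscr{O}'}$ so that ``the same reaction up to reversal'' is unambiguous, and verifying that the proportionality constant in the equivalence relation simply flips sign when exactly one of the two reactions is the reversed one, so that the relation is genuinely invariant and not merely analogous. Reformulating the equivalence relation as proportionality of restricted coordinate functionals is the device that makes this transparent and removes any apparent dependence on the chosen kernel basis $\{v^l\}$.
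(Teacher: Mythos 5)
Your proposal is correct, and it is considerably more complete than the argument the paper actually gives. The paper's proof is two sentences: each equivalence class $P_i$ of any $\mathscr{P}$-decomposition is contained in a unique fundamental class $C_i$, each $\mathscr{F}$-subnetwork contains a unique $\mathscr{P}$-subnetwork, and the claim is taken to follow. That disposes of your first assertion (the generation step, which you also handle by the explicit closure $C_i = P_i \cup \{\text{reverses of reversible } r \in P_i\}$), but it silently presupposes the very point you isolate as the real content: the fundamental classes are defined via condition (iii) in terms of ``the same equivalence class on $\mathscr{O}$,'' so a priori they depend on the chosen orientation, and the word ``unique'' in the proposition needs the well-definedness argument. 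Your reduction to a single reversal, the identity $L_{\mathscr{O}'} = L_{\mathscr{O}} D$ with $D$ the sign-flip involution, hence $\mathrm{Ker}\, L_{\mathscr{O}'} = D(\mathrm{Ker}\, L_{\mathscr{O}})$, and the basis-free reformulation of equivalence as proportionality of the nonzero restricted coordinate functionals, together give a clean and correct proof of that invariance; it is the same mechanism the paper gestures at elsewhere (the earlier proposition on $\mathscr{P}$-decompositions of $\mathscr{N}_{\mathscr{O}}$ versus $\mathscr{N}_{\mathscr{O}'}$, and the factor-space description $\mathbb{R}^{\mathscr{R}}/(\mathrm{Ker}\, N)^{\perp}$ in the introduction), but nowhere written out. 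In short: the paper's route is shorter because it treats ``the $\mathscr{F}$-decomposition'' as already canonically given; yours buys an actual proof that it is.
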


\begin{proof}
Note that each partition set (``equivalence class'') of any $\mathscr{P}$-decomposition is contained in a unique partition set (``fundamental class'') of the $\mathscr{F}$-decomposition. Hence, every subnetwork of any $\mathscr{P}$-decomposition is contained in a unique subnetwork of the $\mathscr{F}$-decomposition. Conversely, each $\mathscr{F}$-subnetwork contains a unique $\mathscr{P}$-subnetwork.
\end{proof}

An upper bound for the number of fundamental classes is clearly given by $(r_{\text{irr}} + r_{\text{rev}})$, which is the number of reactions in any orientation $\mathscr{O}$.  It follows that $w \le r_{\text{irr}} + r_{\text{rev}}$. This upper bound is sharp in the sense that there are CRNs for which $w = r_{\text{irr}} + r_{\text{rev}}$. The following proposition provides a lower bound for $w$:
\begin{proposition}
For any CRN, $w \ge r_{\text{irr}} + r_{\text{rev}} - s$.
\end{proposition}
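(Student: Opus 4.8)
The plan is to fix an orientation $\mathscr{O}$ and to convert the combinatorial definition of equivalence classes (which generate the fundamental classes, and hence determine $w$) into a statement about how many lines through the origin are needed to span the coordinate space of $\text{Ker } L_\mathscr{O}$. First I would invoke part (i) of the earlier proposition, which gives via the Rank--Nullity Theorem that $\dim \text{Ker } L_\mathscr{O} = r_{\text{irr}} + r_{\text{rev}} - s =: d$, since $\lvert \mathscr{O} \rvert = r_{\text{irr}} + r_{\text{rev}}$ and $\text{Im } L_\mathscr{O} = S$ has dimension $s$. I then fix a basis $\{v^1, \ldots, v^d\}$ of $\text{Ker } L_\mathscr{O}$, precisely the basis used to define the equivalence classes.

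Next, for each reaction $R \in \mathscr{O}$ I would record its profile $\phi(R) := (v^1_R, \ldots, v^d_R) \in \mathbb{R}^d$, reading off the $R$-component of each basis vector. With this notation the definition of the classes translates cleanly: $R \in P_0$ if and only if $\phi(R) = 0$, and two reactions outside $P_0$ lie in the same equivalence class if and only if their profiles are nonzero scalar multiples of one another, i.e. they span a common line through the origin in $\mathbb{R}^d$. Consequently $w$, the number of nonzero classes, is exactly the number of distinct lines determined by the nonzero profiles $\{\phi(R) : R \in \mathscr{O} \setminus P_0\}$.

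The crux is then a rank argument. Arranging the profiles as the rows of the $(r_{\text{irr}} + r_{\text{rev}}) \times d$ matrix $V$ whose columns are $v^1, \ldots, v^d$, the linear independence of the basis forces $\text{rank } V = d$; equivalently, the span of $\{\phi(R) : R \in \mathscr{O}\}$ equals all of $\mathbb{R}^d$. The vanishing profiles (those of $P_0$) contribute nothing to this span, so the nonzero profiles already span $\mathbb{R}^d$. Grouping these nonzero profiles into their $w$ lines and choosing one representative per line, every nonzero profile is a scalar multiple of some representative, so the $w$ representatives span the same space $\mathbb{R}^d$. Since any spanning set of a $d$-dimensional space has at least $d$ elements, we conclude $w \ge d = r_{\text{irr}} + r_{\text{rev}} - s$.

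I expect the main obstacle to be bookkeeping rather than depth. One must take care that $w$ counts only the nonzero classes, so that the $P_0$ reactions (with vanishing profiles) are correctly excluded and do not interfere with the spanning count, and that the relation ``parallel profiles'' genuinely means lying on a common line, so that collapsing each class to a single representative cannot shrink the span below $\mathbb{R}^d$. Once the identification of nonzero equivalence classes with distinct lines and the full-rank property of $V$ are established, the inequality follows immediately from the elementary fact that fewer than $d$ lines cannot span a $d$-dimensional space.
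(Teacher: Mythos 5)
Your proof is correct and follows essentially the same route as the paper: the paper phrases the argument in the factor space $\mathbb{R}^{\mathscr{O}}/(\mathrm{Ker}\, L_{\mathscr{O}})^{\perp}$, whose dimension is $r_{\text{irr}}+r_{\text{rev}}-s$, and observes that equivalence of reactions is pairwise linear dependence of their (nonzero) cosets, so the number of nonzero classes is at least that dimension. Your profile map $\phi$ is exactly a coordinate realization of that quotient map, and your full-rank/spanning argument makes explicit the step the paper leaves implicit.
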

\begin{proof}
$\omega _r$ and $\omega _{r'}$  are equivalent iff $\omega _r =\alpha \omega _{r'}$ in ${\mathbb{R}^\mathscr{O}/({\text{Ker\ }} L_\mathscr{O})^{\perp}}$. The latter means that their cosets are pairwise linearly dependent in the factor space. Hence, the number of equivalence classes is at least $ \text{dim } {\mathbb{R}^\mathscr{O}/({\text{Ker\ }} L_\mathscr{O})^{\perp}} = r_{\text{irr}} + r_{\text{rev}} - s$. Since the fundamental class $C_0$ is mapped to 0 in the factor space, it follows that $w \ge r_{\text{irr}} + r_{\text{rev}} - s$.
\end{proof}

\begin{corollary}
$w \ge r_{\text{irr}} + r_{\text{rev}} - (n - l)$
\end{corollary}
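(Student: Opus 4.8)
The plan is to obtain this corollary as an immediate weakening of the bound established in the preceding proposition, namely $w \ge r_{\text{irr}} + r_{\text{rev}} - s$. Since the corollary differs only in replacing the rank $s$ by the quantity $n - l$, the sole fact I would need to supply is the inequality $s \le n - l$, after which the result follows by substitution.

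First I would invoke the non-negativity of the deficiency. By definition $\delta = n - l - s$, and $\delta \ge 0$ for any CRN (equivalently, the rank $s = \dim S$ of the stoichiometric subspace cannot exceed $n - l = \dim(\operatorname{Im} I_a)$, since $S$ is the image under the molecularity map $Y$ of the $(n-l)$-dimensional space $\operatorname{Im} I_a$, and a linear image cannot increase dimension). Hence $s \le n - l$, which is the same as $-s \ge -(n - l)$.

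With this in hand I would simply chain the two inequalities:
\[ w \ge r_{\text{irr}} + r_{\text{rev}} - s \ge r_{\text{irr}} + r_{\text{rev}} - (n - l), \]
where the first inequality is the content of the preceding proposition and the second is the substitution $s \le n - l$ just justified. This yields the claimed lower bound.

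I do not expect any genuine obstacle here, as the argument is a one-line consequence of a result already proved together with a standard structural fact. The only point worth stating explicitly is the justification of $s \le n - l$; once the non-negativity of the deficiency is recorded, the chain of inequalities completes the proof with no further computation.
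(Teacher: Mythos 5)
Your proof is correct and matches the paper's (implicit) argument: the corollary is just the preceding proposition's bound $w \ge r_{\text{irr}} + r_{\text{rev}} - s$ weakened via $s \le n - l$, i.e., via the non-negativity of the deficiency. Your justification of $s \le n - l$ through $S = Y(\operatorname{Im} I_a)$ is a valid way to record that standard fact.
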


\begin{runningexample}
Consider the subnetwork $\mathscr{N} = \mathscr{N}_1 \cup \mathscr{N}_2$ of the Schmitz's carbon cycle model in \cite{schmitz} that shows how the movement of carbon among different pools which represent major parts of the Earth \cite{fortun2}.
We label the reactions of the subnetwork, together with kinetic orders, depicted in Figure \ref{picschmitz}.
\begin{figure}[H]
\begin{center}
\includegraphics[width=9cm,height=18cm,keepaspectratio]{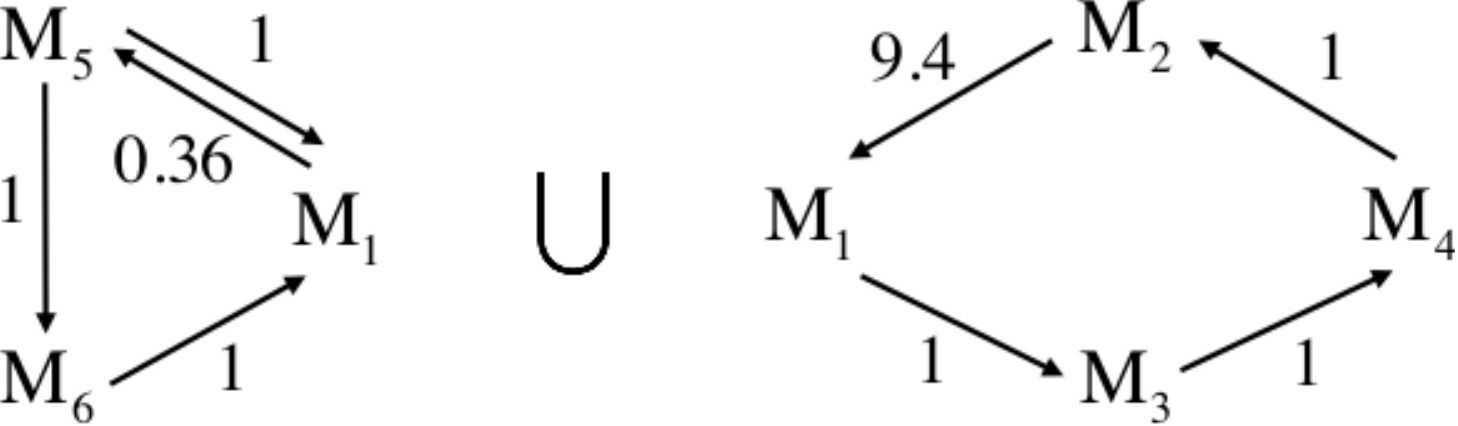}
\caption{A subnetwork of the Schmitz's carbon cycle model \cite{fortun2,schmitz}.}
\label{picschmitz}
\end{center}
\end{figure}
The following are the reactions of the subnetwork.
\[ \begin{array}{lll}
R_1: M_1 \to M_5 & \ \ \ &  R_5: M_1  \to M_3\\
R_2: M_5  \to  M_1  &  \ \ \ & R_6: M_3 \to M_4 \\
R_3: M_5  \to M_6 &  \ \ \ & R_{7}: M_4 \to M_2\\
R_4: M_6  \to M_1  &  \ \ \ & R_{8}: M_2 \to M_1  \\
\end{array}\]
We choose the orientation $\mathscr{O}=\{R_1,R_3,R_4,R_5,R_6,R_7,R_8\}$. Hence, a basis for $Ker{L_\mathscr{O}}$ is
\[\kbordermatrix{%
 ~ & v_1 & v_2 \cr
R_1 & 0 & 1 \cr
R_3 & 0 & 1 \cr
R_4 & 0 & 1 \cr
R_5 & 1 & 0 \cr
R_6 & 1 & 0 \cr
R_7 & 1 & 0 \cr
R_8 & 1 & 0 \cr
}.\]
This shows that the $\mathscr{P}$-decomposition induces the following fundamental classes:
$\{R_1,R_3,R_4\}$ and $\{R_5,R_6,R_7,R_8\}$.
Hence, it follows that the $\mathscr{F}$-decomposition has these fundamental classes:
$\{R_1,R_2,R_3,R_4\}$ and $\{R_5,R_6,R_7,R_8\}$,
which are precisely the subnetworks $\mathscr{N}_1$ and $\mathscr{N}_2$ of $\mathscr{N}$, respectively.
\end{runningexample}

\subsection{Independence and Incidence-Independence of the $\mathscr{P}$- and $\mathscr{F}$-decompositions}
This subsection shows that the essential properties of any $\mathscr{P}$-decomposition of any $\mathscr{O}$-subnetwork $\mathscr{N}_\mathscr{O}$ are fully reflected in the unique $\mathscr{F}$-decomposition of $\mathscr{N}$. We record some basic properties of independent $\mathscr{F}$-decompositions as well as provide an example of a class of CRNs with independent $\mathscr{F}$-decompositions.

\begin{theorem}
\label{PifC}
Let $\mathscr{N}_\mathscr{O}$ be the subnetwork of  $\mathscr{N}$ defined by the orientation $\mathscr{O}$ being a subset of $\mathscr{R}$. Then the following holds:
\begin{itemize}
\item [i.] The $\mathscr{P}$-decomposition of $\mathscr{N}_\mathscr{O}$ is independent if and only if the $\mathscr{F}$-decomposition of $\mathscr{N}$ is independent.
\item [ii.] The $\mathscr{P}$-decomposition of $\mathscr{N}_\mathscr{O}$ is incidence-independent if and only if the $\mathscr{F}$-decomposition of $\mathscr{N}$ is incidence-independent.
\item [iii.] The $\mathscr{P}$-decomposition of $\mathscr{N}_\mathscr{O}$ is bi-independent if and only if the $\mathscr{F}$-decomposition of $\mathscr{N}$ is bi-independent.
\end{itemize}
\end{theorem}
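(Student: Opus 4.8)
The plan is to reduce all three equivalences to one structural observation: each subnetwork of the $\mathscr{F}$-decomposition of $\mathscr{N}$ is obtained from the corresponding subnetwork of the $\mathscr{P}$-decomposition of $\mathscr{N}_\mathscr{O}$ simply by adjoining the reverse reactions of its reversible pairs, and adjoining a reverse reaction alters neither the stoichiometric subspace nor the incidence map image of a subnetwork. Concretely, by the proposition that any $\mathscr{P}$-decomposition generates the (unique) $\mathscr{F}$-decomposition, the nonzero equivalence classes $P_1,\dots,P_w$ (and possibly $P_0$) partitioning $\mathscr{O}$ are in bijection with the fundamental classes $C_1,\dots,C_w$ (and possibly $C_0$) partitioning $\mathscr{R}$, with $P_i \subseteq C_i$ and $C_i \setminus P_i$ consisting exactly of the reverses $y' \to y$ of the reversible reactions $y \to y' \in P_i$, by clause (iii) of the definition of fundamental class.

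First I would establish that the subnetwork subspaces coincide index by index. For the stoichiometric subspaces, the reverse reaction $y' \to y$ contributes the reaction vector $y - y' = -(y'-y)$, which already lies in the span of the reaction vectors of $P_i$; hence $S_i = S_i'$ where $S_i$ is the $i$-th subspace of the $\mathscr{F}$-decomposition and $S_i'$ that of the $\mathscr{P}$-decomposition. For the incidence map images, the reverse reaction contributes $\omega_y - \omega_{y'} = -(\omega_{y'} - \omega_y)$, again already in the image of the $i$-th $\mathscr{P}$-subnetwork; hence $\mathrm{Im}\,I_{a,i} = \mathrm{Im}\,I_{a,i}'$. This is precisely the computation used in the proof of the earlier proposition comparing the $\mathscr{P}$-decompositions of $\mathscr{N}_\mathscr{O}$ and $\mathscr{N}_{\mathscr{O}'}$, now applied to the passage from $\mathscr{O}$ to $\mathscr{R}$. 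Next I would record that the two ambient spaces agree: $S(\mathscr{N}) = S(\mathscr{N}_\mathscr{O})$ by Proposition \ref{trivial_decomposition} (the $\mathscr{O}$-decomposition is trivial), and $\mathrm{Im}\,I_a(\mathscr{N}) = \mathrm{Im}\,I_a(\mathscr{N}_\mathscr{O})$ by the identical reverse-reaction argument applied to all of $\mathscr{R}$ versus $\mathscr{O}$ (reverse reactions add only negatives of vectors already present).

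With these facts in hand, all three equivalences follow from a dimension count. A decomposition of an ambient space $V$ into subspaces $V_i$ is independent precisely when $\dim V = \sum_i \dim V_i$. Since the ambient space $S$ and each summand $S_i$ are identical for the $\mathscr{P}$- and $\mathscr{F}$-decompositions, the equality $\dim S = \sum_i \dim S_i$ holds for one if and only if it holds for the other, which yields (i). Replacing $S$ by $\mathrm{Im}\,I_a$ and $S_i$ by $\mathrm{Im}\,I_{a,i}$ gives (ii) verbatim. Part (iii) is then immediate, since bi-independence is by definition the conjunction of independence and incidence-independence.

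I expect the only delicate points to be the bookkeeping for the zero class $P_0$/$C_0$ and the verification that the $\mathscr{P}$-to-$\mathscr{F}$ correspondence is a genuine bijection of summands. Both are controlled by the generation proposition together with the observation that reverse reactions introduce no new complexes, so the zero class behaves exactly like the nonzero classes under the passage from $\mathscr{O}$ to $\mathscr{R}$ and no new subtlety arises; indeed, one could equally verify (ii) through the equivalent criterion $n-l = \sum_i (n_i - l_i)$, noting that adjoining reverse reactions leaves every $n_i$ and $l_i$ (and $n$ and $l$) unchanged.
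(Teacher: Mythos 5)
Your proof is correct and follows essentially the same route as the paper's: both rest on the observation that each fundamental class $C_i$ differs from the corresponding equivalence class $P_i$ only by the reverses of its reversible reactions, whose reaction vectors and incidence-map images are negatives of vectors already present, so the subnetwork subspaces (and the ambient $S$ and $\mathrm{Im}\, I_a$) coincide. Your index-by-index identification plus dimension count is a slightly more explicit packaging of the paper's argument, which for part (ii) phrases the same point as adding or removing columns of a block-diagonal incidence matrix without changing its rank.
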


\begin{proof}
For (i), suppose the $\mathscr{F}$-decomposition is independent. Then, $S$ is the direct sum of the stoichiometric subspaces of the subnetworks corresponding to the fundamental classes. Note, ${P_i} \subseteq {C_i}$ for all $i$. By the definition of orientation, if $A\to B,B\to A \in \mathscr{R}$ then $A\to B \in P_i$ or $B\to A \in P_i$, but not both, for some $i$. Thus, $S$ is still the direct sum of the stoichiometric subspaces of the subnetworks corresponding to the equivalence classes. Therefore, the $\mathscr{P}$-decomposition is independent. On the other hand, suppose the $\mathscr{P}$-decomposition is independent. For the remaining reactions in $\mathscr{R}\backslash \mathscr{O}$, by definition, the reversible pairs must belong to the same fundamental class. Hence, $S$ is the direct sum of the stoichiometric subspaces of the subnetworks corresponding to the fundamental classes. Therefore, the $\mathscr{F}$-decomposition is independent. 
On the other hand, for (ii), suppose the $\mathscr{F}$-decomposition is incidence-independent. By definition, the incidence matrix of the the network is the direct sum of the incidence matrices of the fundamental classes as depicted below where each $F_i$ is indexed by complexes (rows) and reactions (columns).
\[
 \begin{pmatrix} 
    F_{0} & &  & &  \\
     & F_{1} & &\text{\huge0} & \\
     & & \ddots &  & \\
     & \text{\huge0}& & \ddots & \\
     &  &   &   & F_{w} 
    \end{pmatrix}
\]
Note ${P_i} \subseteq {C_i}$ for all $i$. Also, by definition of orientation which is partitioned by equivalence classes $P_i$'s, if a reaction is reversible, one must belong to the orientation and the other must not. Hence, if we remove one of these two reactions corresponding to two columns in the incidence matrix, the dimension of the resulting matrix will not change. This implies the incidence-independence of the $\mathscr{P}$-decomposition. Conversely, suppose the $\mathscr{P}$-decomposition is incidence-independent. Then adding the reversible pair in $P_i$ for any $i$ will not change the dimension of the incidence matrix. Thus, $\mathscr{F}$-decomposition is incidence-independent.
Statement (iii) follows from (i) and (ii).
\end{proof}

\begin{runningexample}
We again consider the subnetwork $\mathscr{N}$ of the Schmitz's carbon cycle model. Since the dimension of the stoichiometric subspaces of the fundamental classes under the $\mathscr{F}$-decomposition is equal to the dimension of the stoichiometric subspaces of $\mathscr{N}$, the $\mathscr{F}$-decomposition is independent. On the other hand, $n - l = 6 - 1 = 5$ and $\sum {\left( {{n_i} - {l_i}} \right)}  = \left( {3 - 1} \right) + \left( {4 - 1} \right) = 5$, which proves the incidence-independence of the $\mathscr{F}$-decomposition. Therefore, the said decomposition is bi-independent.
\end{runningexample}

\subsection{Independent $\mathscr{F}$-decompositions}
In this section, we present a useful necessary condition for an independent $\mathscr{F}$-decomposition and two CRN classes whose $\mathscr{F}$-decompositions are always independent. In Section \ref{decomposition:types:network:prop}, independent $\mathscr{F}$-decompositions is further analyzed by using the classification introduced by H. Ji into subnetwork types.

\subsubsection{A necessary condition for independent $\mathscr{F}$-decompositions}
\label{necessarycondition:independent}
We begin with a general property of independent decompositions.

\begin{proposition}
\label{boundforw}
Let $\mathscr{N} = \mathscr{N}_1 \cup \mathscr{N}_2 \cup ... \cup \mathscr{N}_k$ be a CRN decomposition. If the decomposition is independent, then $k \le s$. Consequently, $k \le n-l$.
\end{proposition}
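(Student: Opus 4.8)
The plan is to turn independence into an additivity statement for dimensions and then to bound each summand below by $1$. First I would note that, since the decomposition is independent, $S = S_1 \oplus S_2 \oplus \cdots \oplus S_k$; taking dimensions on both sides gives $s = s_1 + s_2 + \cdots + s_k$, where I write $s_i := \dim S_i$.

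The key step is the claim that $s_i \ge 1$ for every $i$. This is where the CRN axioms enter. Each $\mathscr{R}_i$ is a block of a partition of $\mathscr{R}$, hence nonempty, so the subnetwork $\mathscr{N}_i$ contains at least one reaction $C_a \to C_b$. By the definition of a CRN we have $(C_a, C_a) \notin \mathscr{R}$, so $C_a \ne C_b$ and the associated reaction vector $C_b - C_a$ is nonzero. Therefore $S_i$ contains a nonzero vector and $s_i \ge 1$. Combining this with the additivity above yields $s = \sum_{i=1}^{k} s_i \ge k$, i.e. $k \le s$.

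For the stated consequence I would simply invoke nonnegativity of the deficiency: $\delta = n - l - s \ge 0$ gives $s \le n - l$, whence $k \le s \le n - l$. The whole argument is short, and the only place demanding care is the lower bound $s_i \ge 1$; everything else is immediate from the meaning of a direct sum and from $\delta \ge 0$. I do not anticipate a genuine obstacle, only the need to state explicitly that each subnetwork's stoichiometric subspace is nontrivial, which rests on the exclusion of reactions of the form $C_i \to C_i$.
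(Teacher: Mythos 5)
Your proof is correct and follows essentially the same route as the paper's: the paper also argues from $s_i \ge 1$ (hence $\sum s_i \ge k$) together with independence forcing $\sum s_i = s$, and then uses $\delta \ge 0$ for the consequence. Your explicit justification of $s_i \ge 1$ via the exclusion of reactions $C_i \to C_i$ is a small extra detail the paper leaves implicit, but the argument is the same.
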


\begin{proof}
Since $s_i \ge 1$, we have $\sum s_i \ge k$. If $k > s$, then  $\sum s_i  > s$, i.e., the decomposition is dependent, which shows the first claim. The second follows from $0 \le \delta$ or $s \le  n - l$.
\end{proof}

\begin{corollary}
For an independent $\mathscr{F}$-decomposition, $w \le s$.  Consequently, $w \le n - l$.
\end{corollary}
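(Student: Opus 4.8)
The plan is to derive this corollary directly from Proposition \ref{boundforw}, treating the $\mathscr{F}$-decomposition as the general independent decomposition to which that proposition applies; the only subtlety is the translation between the count $w$ of nonzero fundamental classes and the actual number of subnetworks in the decomposition.

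First I would pin down the number $k$ of subnetworks in the $\mathscr{F}$-decomposition. Recall that $w$ counts the nonzero fundamental classes, while the zero class $C_0$ contributes one additional subnetwork exactly when $P_0 \neq \emptyset$. Thus $k = w$ when $C_0 = \emptyset$ and $k = w + 1$ when $C_0 \neq \emptyset$, so in every case $w \le k$.

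Next, since the $\mathscr{F}$-decomposition is assumed independent, Proposition \ref{boundforw} applies and yields both $k \le s$ and $k \le n - l$. Chaining these bounds with $w \le k$ gives $w \le s$ and $w \le n - l$, which are precisely the two assertions of the corollary.

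I do not anticipate a genuine obstacle here: the whole argument is the case distinction on whether $C_0$ is empty together with a single invocation of Proposition \ref{boundforw}. The one point worth checking is that that proposition is legitimately applicable to \emph{every} subnetwork of the $\mathscr{F}$-decomposition, including the one induced by $C_0$; this is automatic, since each subnetwork contains a reaction $y \to y'$ with $y \neq y'$ (self-loops being excluded by the definition of a CRN), so each stoichiometric subspace satisfies $s_i \ge 1$ and the hypotheses underlying Proposition \ref{boundforw} are indeed met.
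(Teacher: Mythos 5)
Your argument is correct and is essentially identical to the paper's own proof: both reduce the corollary to Proposition \ref{boundforw} via the relation $k = w$ (if $P_0 = \emptyset$) or $k = w+1$ (otherwise), so that $w \le k \le s \le n-l$. The extra check that each subnetwork has $s_i \ge 1$ is a harmless elaboration of a point the paper leaves implicit.
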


\begin{proof}
For the $\mathscr{F}$-decomposition, $w = k - 1$ (if $P_0$ is non-empty) or $w = k$ (otherwise), and the claims follow.
\end{proof}

\begin{example}
In \cite{hernandez}, we presented the CRNs of a popular model of anaerobic yeast fermentation (Section 3.1) and a model of terrestrial carbon recovery (Section 3.2). For the first CRN, we have $s =  7$ and $w =  11$  , and for the second one, $s =  4$ and $w =  6$ . It follows  that the $\mathscr{F}$-decompositions of both CRNs are not independent.  
\end{example}

\subsubsection{S-system CRNs have independent $\mathscr{F}$-decompositions}

First, we show that for S-system CRNs, the $\mathscr{F}$-decomposition is a familiar construct. We recall a definition and a result from \cite{FML2018}:

\begin{definition}
Let $R_j$ and $P_j$ be the set of variables regulating the inflow and outflow reactions of the species $X_j$ (i.e., dependent variable) of an S-system, respectively. The species is called {\bf reversible} if $R_j=P_j$. Otherwise, it is called {\bf irreversible}. An S-system is called reversible (irreversible) if all its species are reversible (irreversible).
\end{definition}

Our claim is simply the following proposition.

\begin{proposition}
For any S-system embedded CRN, the $\mathscr{F}$-decomposition is the species decomposition.
\end{proposition}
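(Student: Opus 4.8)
The plan is to show that the partition of $\mathscr{R}$ into fundamental classes is exactly the partition by species; recall that the species decomposition assigns to each species $X_j$ the subnetwork consisting of all reactions that change $X_j$. Write $e_j\in\mathbb{R}^\mathscr{S}$ for the unit vector of $X_j$, and let $p_j$ and $d_j$ denote the inflow and outflow reactions of $X_j$. The decisive structural feature of the embedding is that the $j$-th S-system balance involves only $\dot{x}_j$, so $p_j$ and $d_j$ change only $X_j$ and their reaction vectors are $+e_j$ and $-e_j$; in particular each species contributes exactly the block $\{p_j,d_j\}$ to the species decomposition. Moreover $p_j$ and $d_j$ carry the kinetic-order rows supported on the regulating sets $R_j$ and $P_j$, so they issue from distinct reactant complexes (or, when those rows coincide, branch from a common one); in neither case are they a reversible pair. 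Hence $r_{\mathrm{rev}}=0$, the unique orientation is $\mathscr{O}=\mathscr{R}$, and the $\mathscr{F}$- and $\mathscr{P}$-decompositions coincide.

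It then remains to compute $\mathrm{Ker}\,L_\mathscr{O}$ and to read off its equivalence classes. Since the $2m$ reaction vectors are $\pm e_1,\dots,\pm e_m$ and the $e_j$ are linearly independent (so $s=m$), the relation $\sum_j(\alpha_{p_j}-\alpha_{d_j})e_j=0$ forces $\alpha_{p_j}=\alpha_{d_j}$ for every $j$. Thus $\{v^{(j)}:=\omega_{p_j}+\omega_{d_j}\}_{j=1}^{m}$ is a basis of $\mathrm{Ker}\,L_\mathscr{O}$, of the expected dimension $2m-s=m$. In this basis the coordinate row of each of $p_j$ and $d_j$ is nonzero only in its $j$-th entry (the coefficient of $v^{(j)}$): no row vanishes, so $P_0=\emptyset$; the rows of $p_j$ and $d_j$ coincide and are therefore pairwise dependent; and rows coming from different species have disjoint support, hence are linearly independent. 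The equivalence classes are consequently exactly the pairs $\{p_j,d_j\}$, so the $\mathscr{P}$-decomposition is the species decomposition, and by the last sentence of the previous paragraph so is the $\mathscr{F}$-decomposition.

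The hard part is not analytic but concerns pinning down the embedding: I must verify that the S-system notion of a ``reversible species'' ($R_j=P_j$) never yields a reversible reaction pair in the CRN, since otherwise that species' inflow reaction would acquire a vanishing kernel row and be swept into the zero fundamental class $C_0$, destroying the species-by-species splitting. Equality of the regulating supports does not equate the two kinetic-order rows, and even when those rows do coincide the two reactions merely share a reactant complex rather than being mutually inverse; the special stoichiometry that would make $p_j$ and $d_j$ an inverse pair is ruled out by the construction. Once this is secured we have $\mathscr{O}=\mathscr{R}$ with $P_0=\emptyset$, and the kernel computation above completes the proof.
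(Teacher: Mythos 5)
Your argument is sound for S-systems in which every species is irreversible, and in that case it is essentially the paper's own computation: the kernel of $L_\mathscr{O}$ is spanned by the vectors $\omega_{p_j}+\omega_{d_j}$, the coordinate rows of $p_j$ and $d_j$ coincide while rows from different species have disjoint support, so the equivalence classes are the pairs $\{p_j,d_j\}$. The problem is the step you yourself flag as ``the hard part'': the claim that the embedding never produces a reversible reaction pair, so that $r_{\mathrm{rev}}=0$ and $\mathscr{O}=\mathscr{R}$. You assert that this ``is ruled out by the construction'' but give no argument, and the assertion is in fact contrary to the framework the paper is working in. The notion of a reversible species ($R_j=P_j$) is introduced precisely because for such a species the inflow and outflow reactions of the embedded CRN \emph{do} form a reversible pair: the paper's proof takes an orientation of size $2m-m_{\mathrm{rev}}$ with $\dim\mathrm{Ker}\,L_\mathscr{O}=m-m_{\mathrm{rev}}$, and the subsequent example of an S-system CRN with $m$ reversible species having only $2m$ complexes (two per species, i.e., $C_j\rightleftarrows C_j'$) confirms that each reversible species contributes a genuine reversible pair.

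Once $m_{\mathrm{rev}}>0$ your proof breaks in two places. First, the orientation is a proper subset of $\mathscr{R}$ and your basis $\{\omega_{p_j}+\omega_{d_j}\}_{j=1}^m$ of $\mathrm{Ker}\,L_\mathscr{O}$ is wrong: only the irreversible species contribute such kernel vectors, and the single representative $\chi_j$ of a reversible pair has vanishing coordinates against every kernel basis vector, hence lands in the zero class $P_0$ rather than in a class of its own. Second, for a reversible species the grouping of $p_j$ with $d_j$ into one fundamental class cannot come from proportionality of kernel rows at all; it comes from clause (ii) of the fundamental-class definition (a reversible pair always shares a fundamental class), which your argument never invokes. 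The paper's proof handles both issues by splitting the basis of $\mathbb{R}^\mathscr{O}$ into the kernel vectors $\omega_i+\omega_{-i}$ for irreversible species and the vectors $\chi_j$ for reversible ones, and by using the inner-product computation $\left\langle \omega_k-\alpha\omega_i,\ \omega_i+\omega_{-i}\right\rangle=-\alpha$ to separate distinct species. To repair your proof you must either prove rigorously that the embedding excludes reversible pairs (which would contradict the paper's later examples), or treat the reversible species separately as the paper does.
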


\begin{proof}
We denote the inflow reaction in $\mathscr{R}_i$ with $r_i$, the outflow with $r_{-i}$, and the corresponding basis vectors with $\omega _i$ and $\omega _{-i}$, respectively.  We set $m':= m - m_{\rm rev}$, and index the irreversible species $X_1,X_2,...,X_{m'}$. Since $|\mathscr{R}| = 2m$ and $s = m$, for any orientation, $|O|= 2m - m_{\rm{rev}}$ and $\dim {\rm Ker} L_\mathscr{O} = m - m_{\text{rev}}$. 
The $m - m_{\text{rev}}$ vectors 
$\omega _i + \omega _{-i}$, $i=1,2,...,m'$ in ${\text Ker} L_\mathscr{O}$ are linearly independent, hence form a basis. On the other hand, the $m$ vectors 
$\omega _i + \omega _{-i}, \chi _j$ with $i=1,2,...,m'$, and $j = 1,2,...,m_{\text{rev}}$ and $\chi _j$ the reaction from a reversible pair included in the orientation, form a basis for ${\text Ker} ^{\perp} L_\mathscr{O}$ . 
From the $\mathscr{F}$-decomposition definition, the reactions $\omega _i$ and $\omega _{-i}$ are equivalent, $i = 1,2,...,m$. If $k \ne i$,  $\left\langle {\omega _k - \alpha \omega _i, \omega _i + \omega _{-i}} \right\rangle =-\alpha $
, so that if $\alpha$ is nonzero, then the $k$-th inflow reaction is not equivalent. Similarly, the $k$-th outflow reaction is not equivalent. Hence, the $\mathscr{F}$-equivalence classes are precisely the $\mathscr{R}_i$'s.
\end{proof}

\begin{corollary}
The $\mathscr{F}$-decomposition of the embedded CRN of an S-system is independent.
\end{corollary}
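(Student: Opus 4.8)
The plan is to derive the result directly from the preceding Proposition together with a short rank comparison. Since that Proposition identifies the $\mathscr{F}$-decomposition of an S-system embedded CRN with the species decomposition, I would first record that this decomposition consists of exactly $m$ subnetworks $\mathscr{N}_1,\ldots,\mathscr{N}_m$, one per species, where $\mathscr{N}_i$ is induced by $\mathscr{R}_i = \{r_i, r_{-i}\}$, the inflow and outflow reactions of $X_i$. By the definition of independence, it then suffices to show that $S = S_1 \oplus \cdots \oplus S_m$.

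The central observation I would establish is that each subnetwork stoichiometric subspace $S_i$ is one-dimensional and spanned by the standard basis vector $e_i \in \mathbb{R}^\mathscr{S}$. This rests on the structure of an S-system: the ODE for each species has the form $\dot{x}_i = \alpha_i \prod_k x_k^{g_{ik}} - \beta_i \prod_k x_k^{h_{ik}}$, so the dependence on the remaining species enters only through the kinetic orders (the rate functions), not through the stoichiometry. Consequently the inflow reaction of $X_i$ has net reaction vector $+e_i$ and the outflow reaction has net reaction vector $-e_i$; both are confined to the coordinate of $X_i$, whether or not $X_i$ is a reversible species. Hence $S_i = \text{span}\{e_i\}$ and $s_i = 1$ for every $i$.

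Given this, independence follows from a dimension count. On one hand $\sum_{i=1}^m s_i = m$; on the other hand the proof of the preceding Proposition already records $s = \dim S = m$ for an S-system. Since $S = S_1 + \cdots + S_m$ holds for any decomposition, I would conclude that $\dim(S_1 + \cdots + S_m) = m = \sum_i \dim S_i$, which is precisely the condition that the sum be direct. Therefore the $\mathscr{F}$-decomposition is independent. As a consistency check, note that this is compatible with Proposition \ref{boundforw}, which forces $k \le s$ for any independent decomposition: here $k = s = m$, so the bound is attained with equality.

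The only step requiring genuine care is the claim $S_i = \text{span}\{e_i\}$, i.e.\ that the two reactions assigned to $X_i$ have reaction vectors parallel to $e_i$. I expect this to be the main (though mild) obstacle, since it depends on the specific convention used to embed an S-system as a CRN; once one fixes that the inflow and outflow reactions of $X_i$ alter only the $i$-th species — which is exactly the defining feature of the S-system form — both the $s_i = 1$ conclusion and the subsequent rank comparison are immediate.
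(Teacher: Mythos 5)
Your proposal is correct, but it takes a more self-contained route than the paper. The paper's entire proof of this corollary is a citation: it invokes Theorem 1 of \cite{FML2018}, which asserts that the species decomposition of an (embedded) S-system CRN is independent, and then combines this with the preceding proposition identifying that species decomposition with the $\mathscr{F}$-decomposition. You use the same preceding proposition as the pivot, but instead of outsourcing the independence of the species decomposition, you prove it directly: each $\mathscr{N}_i$ consists of the inflow and outflow reactions of $X_i$, whose reaction vectors are $+e_i$ and $-e_i$, so $S_i = \mathrm{span}\{e_i\}$ and $s_i = 1$; since $S = S_1 + \cdots + S_m$ holds for any decomposition and $\dim S = m$, the sum of dimensions matches and the sum is direct. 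This is exactly the right mechanism (and is presumably what the cited theorem proves), so what your version buys is independence from the external reference and an explicit identification of the $S_i$; in fact it slightly streamlines the logic, since $S = \sum_i \mathrm{span}\{e_i\} = \mathbb{R}^{\mathscr{S}}$ gives $s = m$ as a consequence rather than as a separately recorded fact. Your one flagged worry --- that the reaction vectors of the inflow and outflow reactions of $X_i$ are parallel to $e_i$ --- is indeed the only convention-dependent point, and it is satisfied by the embedded-CRN construction used here (the regulating species enter only through the kinetic orders, while each production or degradation term changes only the $i$-th coordinate by one unit; compare the subnetworks of the form $0 \leftarrow x_i \to 2x_i$ appearing later in the paper, whose reaction vectors are $\mp e_i$).
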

\begin{proof}
It follows from \cite{FML2018} Theorem 1 that the species decomposition is independent, which according to the previous proposition is identical with the $\mathscr{F}$-decomposition.
\end{proof}

\subsubsection{CRNs of phosphorylation/dephosphorylation systems have independent $\mathscr{F}$-decompositions}
\label{PD:independent}
In view of their ubiquitous occurrence in cellular signaling networks, phosphorylation/ dephosphorylation (PD) systems have been extensively studied in the CRNT literature. A recent review by Conradi and Shiu \cite{CODH2018} lists several classes of multi-site PD processes which have been modeled with mass action systems -- in the following proposition, we show that the CRNs of multisite processive and distributive PD processes have distinctively different but both independent $\mathscr{F}$-decompositions.

\begin{proposition}
Let $\mathscr{N}$ be the following CRN for $k$-site processive phosphorylation/ dephosphorylation:
\[\begin{array}{c}
{S_0} + K \mathbin{\lower.3ex\hbox{$\buildrel\textstyle\rightarrow\over
{\smash{\leftarrow}\vphantom{_{\vbox to.5ex{\vss}}}}$}} {S_0}K \mathbin{\lower.3ex\hbox{$\buildrel\textstyle\rightarrow\over
{\smash{\leftarrow}\vphantom{_{\vbox to.5ex{\vss}}}}$}} {S_1}K{\mkern 1mu}  \mathbin{\lower.3ex\hbox{$\buildrel\textstyle\rightarrow\over
{\smash{\leftarrow}\vphantom{_{\vbox to.5ex{\vss}}}}$}} ...{\mkern 1mu}  \to {\mkern 1mu} {S_{k - 1}}K \to {S_k} + K\\
{S_k} + F \mathbin{\lower.3ex\hbox{$\buildrel\textstyle\rightarrow\over
{\smash{\leftarrow}\vphantom{_{\vbox to.5ex{\vss}}}}$}} {S_k}F \mathbin{\lower.3ex\hbox{$\buildrel\textstyle\rightarrow\over
{\smash{\leftarrow}\vphantom{_{\vbox to.5ex{\vss}}}}$}} {\mkern 1mu} ... \mathbin{\lower.3ex\hbox{$\buildrel\textstyle\rightarrow\over
{\smash{\leftarrow}\vphantom{_{\vbox to.5ex{\vss}}}}$}} {S_2}F{\mkern 1mu}  \mathbin{\lower.3ex\hbox{$\buildrel\textstyle\rightarrow\over
{\smash{\leftarrow}\vphantom{_{\vbox to.5ex{\vss}}}}$}} {S_1}F \to {S_0} + F
\end{array}\]
then the fundamental classes generating the $\mathscr{F}$-decomposition is the full reaction set $\mathscr{R}$. Hence, the $\mathscr{F}$-decomposition consists only of $\mathscr{N}$ and is (trivially) independent.
\end{proposition}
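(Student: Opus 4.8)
The plan is to show that the orientation map $L_\mathscr{O}$ has a one-dimensional kernel spanned by a vector with all coordinates nonzero, which collapses every oriented reaction into a single equivalence class. Since the $\mathscr{F}$-decomposition is independent of the orientation chosen, I would fix $\mathscr{O}$ to be all the ``forward'' reactions, reading the top chain from $S_0 + K$ toward $S_k + K$ and the bottom chain from $S_k + F$ toward $S_0 + F$. Each chain is a directed path, so $\mathscr{O}$ contains exactly $k+1$ reactions per chain, giving $|\mathscr{O}| = r_{\mathrm{irr}} + r_{\mathrm{rev}} = 2k+2$; note this count does not depend on which individual steps happen to be reversible.

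Next I would compute the rank $s = \dim S$. The decisive feature of the processive mechanism is that only the fully unphosphorylated $S_0$ and the fully phosphorylated $S_k$ occur as free substrate species, while the intermediate states $S_1, \ldots, S_{k-1}$ appear only bound to $K$ or $F$. Writing the forward reaction vectors of the two chains as $u_1, \ldots, u_{k+1}$ and $w_1, \ldots, w_{k+1}$, I would check that within each chain the vectors are linearly independent: the boundary species $S_0, S_k, K, F$ and the enzyme-bound complexes pin the scalars down to zero one at a time. I would then observe that a combination of the $u_i$ lands in $\mathrm{span}\{S_0, S_k\}$ only when all its coefficients are equal (and symmetrically for the $w_i$), so the two chain-spans meet exactly in $\mathrm{span}\{S_k - S_0\}$. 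Hence $s = (k+1)+(k+1)-1 = 2k+1$.

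Combining these gives $\dim \mathrm{Ker}\, L_\mathscr{O} = |\mathscr{O}| - s = (2k+2)-(2k+1) = 1$, by the Rank-Nullity computation already recorded in the $\mathscr{P}$-decomposition proposition. The single relation is explicit: summing all forward reaction vectors telescopes to $(S_k - S_0) + (S_0 - S_k) = 0$, so the all-ones vector $v = (1,1,\ldots,1) \in \mathbb{R}^\mathscr{O}$ spans $\mathrm{Ker}\, L_\mathscr{O}$. Because every coordinate of this unique basis vector is nonzero, no reaction lies in $P_0$, so $P_0 = \emptyset$; and because any two nonzero scalars are proportional, every pair $r, r' \in \mathscr{O}$ satisfies $v_r = \alpha v_{r'}$ with $\alpha \neq 0$ and thus lies in one equivalence class, i.e. $w = 1$. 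Invoking clause (ii) of the fundamental-class definition, each reverse reaction outside $\mathscr{O}$ is absorbed into the fundamental class of its oriented partner, so the unique fundamental class is all of $\mathscr{R}$, and the $\mathscr{F}$-decomposition consists of $\mathscr{N}$ alone and is trivially independent.

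I expect the main obstacle to be the rank computation: one must argue carefully that $s$ equals $2k+1$ and not less, i.e. that the telescoping identity is the \emph{only} dependency among the reaction vectors. The processive structure (only $S_0, S_k$ free) is exactly what forces the two chains to share the single direction $\mathrm{span}\{S_k - S_0\}$, and ruling out further coincidences among the enzyme-bound complexes is the delicate part. An equivalent and perhaps cleaner route for this step is to verify that the only conservation relations are total $K$, total $F$, and total substrate, which gives $s = m - 3 = 2k+1$ from $m = 2k+4$.
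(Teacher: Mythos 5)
Your proposal is correct and follows essentially the same route as the paper: fix the orientation of forward reactions and show that $\mathrm{Ker}\,L_{\mathscr{O}}$ is spanned by the all-ones vector, so that $P_0=\emptyset$ and all of $\mathscr{O}$ (hence, by clause (ii) of the fundamental-class definition, all of $\mathscr{R}$) lies in a single fundamental class. The only organizational difference is that the paper gets one-dimensionality by directly collecting coefficients in the kernel equation and reading off $\alpha_0=\cdots=\alpha_k=\beta_0=\cdots=\beta_k$, whereas you establish it via a separate rank computation ($s=2k+1$) plus rank--nullity and then exhibit the telescoping relation; both amount to the same calculation, with your rank step being the more laborious way to package it.
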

\begin{proof}
We choose the orientation consisting of the forward reactions and obtain:
\[\begin{array}{c}
{\alpha _k}\left( {{S_k} + K - {S_{k - 1}}K} \right) + {\alpha _{k - 1}}\left( {{S_{k - 1}}K - {S_{k - 2}}K} \right) + ... + {\alpha _1}\left( {{S_1}K - {S_0}K} \right)\\
 + {\alpha _0}\left( {{S_0}K - \left( {{S_0} + K} \right)} \right) + {\beta _0}\left( {{S_0} + F - {S_1}F} \right) + {\beta _1}\left( {{S_1}F - {S_2}F} \right)\\
 + ... + {\beta _k}\left( {{S_k}F - \left( {{S_k} + F} \right)} \right) = 0.
\end{array}\]
Thus,
\[\begin{array}{c}
{S_k}\left( {{\alpha _k} - {\beta _k}} \right) + {S_{k - 1}}K\left( { - {\alpha _k} + {\alpha _{k - 1}}} \right) + ... + {S_0}K\left( { - {\alpha _1} + {\alpha _0}} \right) + {S_0}\left( { - {\alpha _0} + {\beta _0}} \right)\\
 + {S_1}F\left( { - {\beta _0} + {\beta _1}} \right) + ... + {S_k}F\left( { - {\beta _{k - 1}} + {\beta _k}} \right) + K\left( {{\alpha _k} - {\alpha _0}} \right) + F\left( {{\beta _0} - {\beta _k}} \right) = 0.
\end{array}\]
It follows that ${\alpha _0} = {\alpha _1} = ... = {\alpha _n} = {\beta _0} = {\beta _1} = ... = {\beta _n}$ and the conclusion follows.
\end{proof}

\begin{proposition}
\label{PDinde}
Let $\mathscr{N}$ be the following CRN for $k$-site distributive phosphorylation/ dephosphorylation:
\[\begin{array}{c}
{S_0} + K \mathbin{\lower.3ex\hbox{$\buildrel\textstyle\rightarrow\over
{\smash{\leftarrow}\vphantom{_{\vbox to.5ex{\vss}}}}$}} {S_0}K{\mkern 1mu}  \to {S_1} + K{\mkern 1mu}  \mathbin{\lower.3ex\hbox{$\buildrel\textstyle\rightarrow\over
{\smash{\leftarrow}\vphantom{_{\vbox to.5ex{\vss}}}}$}} {S_1}K \to {S_2} + K{\mkern 1mu}  \mathbin{\lower.3ex\hbox{$\buildrel\textstyle\rightarrow\over
{\smash{\leftarrow}\vphantom{_{\vbox to.5ex{\vss}}}}$}} ... \to {S_k} + K\\
{S_k} + F \mathbin{\lower.3ex\hbox{$\buildrel\textstyle\rightarrow\over
{\smash{\leftarrow}\vphantom{_{\vbox to.5ex{\vss}}}}$}} ...{\mkern 1mu} {\mkern 1mu}  \to {\mkern 1mu} {S_2} + F \mathbin{\lower.3ex\hbox{$\buildrel\textstyle\rightarrow\over
{\smash{\leftarrow}\vphantom{_{\vbox to.5ex{\vss}}}}$}} {S_2}F \to {S_1} + F \mathbin{\lower.3ex\hbox{$\buildrel\textstyle\rightarrow\over
{\smash{\leftarrow}\vphantom{_{\vbox to.5ex{\vss}}}}$}} {S_1}F \to {S_0} + F
\end{array}\]
then 
\begin{itemize}
\item[i.] the fundamental classes generating the $\mathscr{F}$-decomposition are of the form \[\begin{array}{*{20}{c}}
{{S_i} + K \mathbin{\lower.3ex\hbox{$\buildrel\textstyle\rightarrow\over
{\smash{\leftarrow}\vphantom{_{\vbox to.5ex{\vss}}}}$}} {S_i}K \to {S_{i + 1}} + K}\\
{{S_{i + 1}} + F \mathbin{\lower.3ex\hbox{$\buildrel\textstyle\rightarrow\over
{\smash{\leftarrow}\vphantom{_{\vbox to.5ex{\vss}}}}$}} {S_{i + 1}}F \to {S_i} + F}
\end{array}{\text{ \ \ for }}i = 0,1,...,k - 1, {\text{ and }}\]
\item[ii.] the $\mathscr{F}$-decomposition is independent.
\end{itemize}
\end{proposition}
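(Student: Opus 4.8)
The plan is to mirror the proof of the processive case: fix the orientation $\mathscr{O}$ consisting of all forward reactions, compute $\mathrm{Ker}\,L_\mathscr{O}$ explicitly, read off the equivalence classes, and then pass back to the fundamental classes. For each $i=0,1,\dots,k-1$ label the oriented reactions of ``block'' $i$ as $a_i\colon S_i+K\to S_iK$, $b_i\colon S_iK\to S_{i+1}+K$, $c_i\colon S_{i+1}+F\to S_{i+1}F$, and $d_i\colon S_{i+1}F\to S_i+F$, so that $|\mathscr{O}|=4k$. A general element of $\mathrm{Ker}\,L_\mathscr{O}$ is a combination $\sum_i(\alpha_i a_i+\beta_i b_i+\gamma_i c_i+\delta_i d_i)$ whose reaction vectors sum to zero; I would collect the coefficient of each species and set it to zero.

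For (i) the balance equations organize themselves neatly. The enzyme--substrate complexes $S_iK$ and $S_{i+1}F$ each occur only in block $i$, so their balances give $\alpha_i=\beta_i$ and $\gamma_i=\delta_i$, after which the balances for $K$ and $F$ are automatically satisfied. The only equations coupling distinct blocks come from the free substrates $S_j$: the endpoints give $\alpha_0=\delta_0$ and $\beta_{k-1}=\gamma_{k-1}$, while for $1\le j\le k-1$ one obtains $\alpha_j-\delta_j=\alpha_{j-1}-\delta_{j-1}$. This is a telescoping recursion for $u_j:=\alpha_j-\delta_j$ with $u_0=0$, forcing $u_j\equiv 0$; hence $\alpha_i=\beta_i=\gamma_i=\delta_i$ within each block and no relation links distinct blocks. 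Therefore $\mathrm{Ker}\,L_\mathscr{O}$ is $k$-dimensional with a basis $\{v^i\}$ whose supports $\{a_i,b_i,c_i,d_i\}$ are pairwise disjoint, so every reaction in block $i$ has the same nonzero coordinate vector while reactions in different blocks are independent. Thus $P_0=\emptyset$ and the equivalence classes are exactly the four-reaction blocks; reinstating the reverse reactions of the reversible pairs $a_i$ and $c_i$ yields precisely the fundamental classes displayed in (i).

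For (ii), by Theorem \ref{PifC}(i) it suffices to show the $\mathscr{P}$-decomposition into these blocks is independent. Write $U_i:=\mathrm{span}\{a_i,b_i,c_i,d_i\}$ for the stoichiometric subspace of block $i$. The single relation $a_i+b_i+c_i+d_i=0$ holds, since $a_i+b_i=S_{i+1}-S_i$ and $c_i+d_i=-(S_{i+1}-S_i)$, and one checks there are no others, so $\dim U_i=3$. Since $s=|\mathscr{O}|-\dim\mathrm{Ker}\,L_\mathscr{O}=4k-k=3k=\sum_i\dim U_i$, the sum $S=\sum_i U_i$ is direct and the decomposition is independent. Alternatively, independence follows without the rank count: if $\sum_i w_i=0$ with $w_i\in U_i$, then writing $w_i=\lambda_i a_i+\mu_i b_i+\nu_i c_i$ and reading the coordinates of the complexes $S_iK$ and $S_{i+1}F$, each of which appears only in block $i$, forces $\lambda_i=\mu_i$ and $\nu_i=0$, so $w_i=\lambda_i(S_{i+1}-S_i)$; as the vectors $S_{i+1}-S_i$ are linearly independent, every $w_i=0$.

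The main obstacle is the bookkeeping in (i): one must track the free-substrate balances carefully enough to see that, although the $S_j$ equations do couple consecutive blocks, the coupling telescopes and---together with the endpoint equations---collapses to the trivial solution $u_j\equiv 0$. Once this decoupling is established, both the identification of the fundamental classes and the independence claim follow with little further work.
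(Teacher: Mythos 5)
Your proof is correct and follows essentially the same route as the paper: the same forward-reaction orientation, the same species-by-species balance computation of $\mathrm{Ker}\,L_\mathscr{O}$ to identify the four-reaction blocks as the equivalence classes, and the same dimension count $\sum_i \dim U_i = 3k = s$ for independence. The only differences are that you make explicit the telescoping argument the paper compresses into ``further manipulation'' and you add an optional coordinate-reading alternative for part (ii); neither changes the substance of the argument.
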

\begin{proof}
We choose the orientation consisting of the forward reactions and obtain:
\[\begin{array}{*{20}{c}}
{{\alpha _0}\left( {{S_0}K - \left( {{S_0} + K} \right)} \right) + {\beta _0}\left( {{S_1} + K - {S_0}K} \right) + {\alpha _1}\left( {{S_1}K - \left( {{S_1} + K} \right)} \right) + {\beta _1}\left( {{S_2} + K - {S_1}K} \right)}\\
{ + ... + {\alpha _{k - 1}}\left( {{S_{k - 1}}K - \left( {{S_{k - 1}} + K} \right)} \right) + {\beta _{k - 1}}\left( {{S_k} + K - {S_{k - 1}}K} \right) + }\\
{{\lambda _{k - 1}}\left( {{S_k}F - \left( {{S_k} + F} \right)} \right) + {\gamma _{k - 1}}\left( {{S_{k - 1}} + F - {S_k}F} \right) + ... + }\\
{{\lambda _1}\left( {{S_2}F - \left( {{S_2} + F} \right)} \right) + {\gamma _1}\left( {{S_1} + F - {S_2}F} \right) + {\lambda _0}\left( {{S_1}F - \left( {{S_1} + F} \right)} \right) + {\gamma _0}\left( {{S_0} + F - {S_1}F} \right) = 0.}
\end{array}\]
Hence, we have
\[\begin{array}{c}
{S_0}K\left( {{\alpha _0} - {\beta _0}} \right) + {S_0}\left( { - {\alpha _0} + {\gamma _0}} \right) + {S_1}K\left( {{\alpha _1} - {\beta _1}} \right) + ... + {S_{k - 1}}K\left( {{\alpha _{k - 1}} - {\beta _{k - 1}}} \right) + \\
{S_k}F\left( {{\lambda _{k - 1}} - {\gamma _{k - 1}}} \right) + ... + {S_2}F\left( {{\lambda _1} - {\gamma _1}} \right) + {S_1}F\left( {{\lambda _0} - {\gamma _0}} \right) + \\
K\left( { - {\alpha _0} + {\beta _0} - {\alpha _1} + {\beta _1} - ... - {\alpha _{k - 1}} + {\beta _{k - 1}}} \right) + F\left( { - {\lambda _{k - 1}} + {\gamma _{k - 1}} - ... - {\lambda _1} + {\gamma _1} - {\lambda _0} + {\gamma _0}} \right)\\
 + {S_1}\left( {{\beta _0} - {\alpha _1} + {\gamma _1} - {\lambda _0}} \right) + {S_2}\left( {{\beta _1} - {\alpha _2} + {\gamma _2} - {\lambda _1}} \right) + ... + {S_k}\left( {{\beta _{k - 1}} - {\lambda _{k - 1}}} \right) = 0.
\end{array}\]
Further manipulation gives ${\alpha _i} = {\beta _i} = {\lambda _i} = {\gamma _i}$ for each $i=0,...,k-1$, which yields (i). In addition, each of the stoichiometric subspaces of the $k$ subnetworks has dimension 3. Since the dimension of the stoichiometric subspace of the whole network is $3k$, (ii) holds.
\end{proof}

\begin{remark}
It is interesting to note that the linkage classes of the distributive CRN are not independent, in contrast to the $\mathscr{F}$-decomposition. The subnetworks of the latter are potentially useful for determining positive equilibria for any kinetics.
The review \cite{CODH2018} also presents models for dual-site PD with ERK mechanism and mixed processive-distributive mechanism. To complete the picture, we also determine their $\mathscr{F}$-decompositions in the following examples.
\end{remark}

\begin{example}
The CRN of dual-site PD with the ERK mechanism is given by:
\[\begin{array}{c}
{S_{00}} + K \mathbin{\lower.3ex\hbox{$\buildrel\textstyle\rightarrow\over
{\smash{\leftarrow}\vphantom{_{\vbox to.5ex{\vss}}}}$}} {S_{00}}K \to {S_{01}}K \to {S_{11}} + K{\rm{ \ \ \ \ \   }}{S_{11}} + F \mathbin{\lower.3ex\hbox{$\buildrel\textstyle\rightarrow\over
{\smash{\leftarrow}\vphantom{_{\vbox to.5ex{\vss}}}}$}} {S_{11}}F \to {S_{10}}F \to {S_{00}} + F\\
{S_{01}}K \mathbin{\lower.3ex\hbox{$\buildrel\textstyle\rightarrow\over
{\smash{\leftarrow}\vphantom{_{\vbox to.5ex{\vss}}}}$}} {S_{01}} + K{\rm{ \ \ \ \ \  }}{S_{10}}F \mathbin{\lower.3ex\hbox{$\buildrel\textstyle\rightarrow\over
{\smash{\leftarrow}\vphantom{_{\vbox to.5ex{\vss}}}}$}} {S_{10}} + F\\
{S_{10}} + K \mathbin{\lower.3ex\hbox{$\buildrel\textstyle\rightarrow\over
{\smash{\leftarrow}\vphantom{_{\vbox to.5ex{\vss}}}}$}} {S_{10}}K \to {S_{11}} + K{\rm{  \ \ \ \ \  }}{S_{01}} + F \mathbin{\lower.3ex\hbox{$\buildrel\textstyle\rightarrow\over
{\smash{\leftarrow}\vphantom{_{\vbox to.5ex{\vss}}}}$}} {S_{01}}F \to {S_{00}} + F
\end{array}\]
in which the $\mathscr{F}$-decomposition consists only of the whole network and hence independent.
\end{example}

\begin{example}
The CRN of dual-site PD with the mixed-mode mechanism is given by:
\[\begin{array}{c}
{S_0} + K \mathbin{\lower.3ex\hbox{$\buildrel\textstyle\rightarrow\over
{\smash{\leftarrow}\vphantom{_{\vbox to.5ex{\vss}}}}$}} {S_0}K \to {S_1}K \to {S_2} + K\\
{S_2} + F \mathbin{\lower.3ex\hbox{$\buildrel\textstyle\rightarrow\over
{\smash{\leftarrow}\vphantom{_{\vbox to.5ex{\vss}}}}$}} {S_2}F \to {S_1} + F \mathbin{\lower.3ex\hbox{$\buildrel\textstyle\rightarrow\over
{\smash{\leftarrow}\vphantom{_{\vbox to.5ex{\vss}}}}$}} {S_1}F \to {S_0} + F
\end{array}\]
in which the $\mathscr{F}$-decomposition consists only of the whole network and hence independent.
\end{example}

\subsection{Incidence-independent $\mathscr{F}$-decompositions}
This section begins with a necessary condition for an incidence-independent $\mathscr{F}$-decomposition analogous to the proposition in Section \ref{necessarycondition:independent} for an independent $\mathscr{F}$-decomposition. We then present a sufficient condition for incidence-independence, namely when the $\mathscr{F}$-decomposition is a $\mathscr{C}$-decomposition, and show that various subsets of S-system CRNs fulfill the condition. Finally, in Section \ref{sect:PDinciinde}, we show that the CRNs of PD processes are also incidence-independent (and hence bi-independent).

\subsubsection{A necessary condition for incidence-independent $\mathscr{F}$-decompositions}
The following proposition is the analogue of Proposition \ref{boundforw} in Section \ref{necessarycondition:independent}:
\begin{proposition}
Let $\mathscr{N} = \mathscr{N}_1 \cup \mathscr{N}_2 \cup ... \cup \mathscr{N}_k$ be a CRN decomposition. If the decomposition is incidence-independent, then $k \le n-l$. If $\mathscr{N}$ has zero deficiency, then $k \le s$.
\end{proposition}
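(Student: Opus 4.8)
The plan is to mirror the proof of Proposition \ref{boundforw}, replacing the role of the stoichiometric dimensions $s_i$ with the incidence-map ranks $n_i - l_i$. First I would invoke the equivalent formulation of incidence-independence recorded earlier in the excerpt, namely that the decomposition is incidence-independent precisely when $n - l = \sum_{i=1}^{k}(n_i - l_i)$, where $n_i$ and $l_i$ count the complexes and linkage classes of the subnetwork $\mathscr{N}_i$.

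Next I would establish the analogue of the inequality $s_i \ge 1$, here in the form $n_i - l_i \ge 1$ for every $i$. The key point is that $n_i - l_i$ is exactly the rank of the incidence map of $\mathscr{N}_i$, i.e. $\dim \text{Im } I_{a,i}$. Since each block $\mathscr{R}_i$ of the inducing partition is nonempty, the subnetwork $\mathscr{N}_i$ contains at least one reaction $C_a \to C_b$; because self-loops are excluded from a CRN by definition, the complexes $C_a$ and $C_b$ are distinct, so the vector $\omega_{C_b} - \omega_{C_a}$ is a nonzero element of $\text{Im } I_{a,i}$. Hence $\dim \text{Im } I_{a,i} = n_i - l_i \ge 1$.

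Summing these bounds then finishes the first claim: incidence-independence gives $n - l = \sum_{i=1}^{k}(n_i - l_i) \ge k$, since each of the $k$ summands is at least $1$. For the second claim, zero deficiency means $\delta = n - l - s = 0$, so $n - l = s$; substituting into the bound just obtained yields $k \le s$.

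I do not expect a genuine obstacle here. The only step requiring a little care is the identification $\dim \text{Im } I_{a,i} = n_i - l_i$ together with the observation that it is strictly positive, which is the exact counterpart of the trivial fact $s_i \ge 1$ used in the independent case; everything else is a direct summation.
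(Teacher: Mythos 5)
Your proof is correct and follows essentially the same route as the paper's: both rest on the bound $n_i - l_i \ge 1$ for each subnetwork and on the characterization of incidence-independence as $n - l = \sum (n_i - l_i)$, with the zero-deficiency case handled by $n - l = s$. The only difference is cosmetic — the paper argues by contraposition and simply asserts $n_i - l_i \ge 1$, whereas you justify that inequality explicitly via the nonemptiness of each $\mathscr{R}_i$ and the exclusion of self-loops.
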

\begin{proof}
Since $n_i - l_i \ge 1$, we have $\sum \left(n_i - l_i\right) \ge k$. If $k > n - l$, then  $\sum \left(n_i - l_i \right)  > n - l$, i.e., the decomposition is incidence-dependent. If the deficiency is zero, $n - l = s$.
\end{proof}

\begin{corollary}
For an incidence-independent $\mathscr{F}$-decomposition, $w \le n - l$. If $\mathscr{N}$ has zero deficiency, then $w\le s$.
\end{corollary}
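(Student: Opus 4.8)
The plan is to obtain both bounds for $w$ as immediate consequences of the preceding Proposition applied to the $\mathscr{F}$-decomposition itself, exactly mirroring the proof of the corresponding corollary for independent $\mathscr{F}$-decompositions in Section~\ref{necessarycondition:independent}.

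First I would observe that the $\mathscr{F}$-decomposition is a genuine CRN decomposition with some number $k$ of subnetworks, one for each non-empty fundamental class. By hypothesis this decomposition is incidence-independent, so the preceding Proposition applies directly and yields $k \le n - l$, together with $k \le s$ in the case that $\mathscr{N}$ has zero deficiency.

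Next I would relate $k$ to $w$. Recall that $w$ counts only the non-zero fundamental classes; hence $k = w$ when the zero fundamental class $C_0$ is empty and $k = w + 1$ when $C_0 \ne \emptyset$. In either case $w \le k$. Substituting into the bounds from the previous step gives $w \le k \le n - l$, and $w \le k \le s$ under zero deficiency, which are precisely the two asserted inequalities.

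I do not expect a genuine obstacle here: the statement is a direct corollary, and the only point demanding any care is the bookkeeping of the zero fundamental class, ensuring that the passage from the subnetwork count $k$ to $w$ is handled correctly whether or not $C_0$ contributes a subnetwork. This is identical in spirit to the treatment used for the independent case, where the relation $w = k-1$ (if $P_0$ is non-empty) or $w = k$ (otherwise) is invoked.
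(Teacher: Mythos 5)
Your proposal is correct and follows exactly the paper's own route: apply the preceding proposition to the $\mathscr{F}$-decomposition to bound the number $k$ of subnetworks, then use $k = w$ or $k = w+1$ (according as the zero fundamental class is empty or not) to conclude $w \le k \le n-l$, and $w \le s$ in the zero-deficiency case. The bookkeeping of the zero class is handled just as in the paper, so there is nothing to add.
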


\begin{proof}
For the $\mathscr{F}$-decomposition, $w = k - 1$ (if $P_0$ is non-empty) or $w = k$ (otherwise), and the claims follow.
\end{proof}

Unfortunately, the condition does not seem to be as useful as its analogue as the following example shows.
\begin{example}
The CRN of a popular Generalized Mass Action (GMA) model of anaerobic yeast fermentation (denoted by ERM0-G) was analyzed in \cite{hernandez} for its capacity for multistationarity. The computation of its $\mathscr{F}$-decomposition showed that $w = 11 < 12 = n - l$, so that the stated condition is fulfilled. However, the same computation showed that the sum of $n_i - l_i$ (over the 11 subnetworks) = 13, so that the $\mathscr{F}$-decomposition is not incidence-independent.
\end{example}

\subsubsection{A sufficient condition: when the $\mathscr{F}$-decomposition is a $\mathscr{C}$-decomposition}
\label{sufficientconditionfcdec}

The $\mathscr{F}$-decompositions of S-system CRNs, though always independent, are in general not incidence-independent. To show this, we recall an example from \cite{FML2018}.
\begin{example}
The (embedded) S-system CRN of a model of the gene regulatory system of Mycobacterium tuberculosis (Mtb) in the non-replicating phase (NRP) of its life cycle was shown to have $m = 40$ species, $n = 98$ complexes, $r =  80$ irreversible reactions and $l = 19$ linkage classes. Since for any (embedded) S-system CRN, the network rank $s = m$, the deficiency $\delta = 98 - 19 - 40 = 39 < 40$. Theorem 1 in \cite{FML2018} implies that the $\mathscr{F}$-decomposition, i.e., species decomposition, is incidence-independent.
\end{example}

However, various subsets of S-system CRNs always have incidence-independent $\mathscr{F}$-decompositions.  Before discussing these examples, we show that in many cases, the incidence-independence is due to the fact that the $\mathscr{F}$-decompositions are $\mathscr{C}$-decompositions. We recall the following proposition and proof from \cite{FML2018}:

\begin{proposition}
Any $\mathscr{C}$-decomposition is incidence-independent.
\end{proposition}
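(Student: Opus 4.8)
The plan is to verify the defining condition of incidence-independence directly. Since the excerpt already records that for \emph{any} decomposition one has $\text{Im } I_a = \text{Im } I_{a,1} + \cdots + \text{Im } I_{a,k}$, it remains only to show that for a $\mathscr{C}$-decomposition this sum is in fact \emph{direct}. I would establish this by locating each summand inside a distinct coordinate block of $\mathbb{R}^\mathscr{C}$.

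First I would unwind the incidence map on each subnetwork. The restricted map $I_{a,i}$ sends a basis vector $\omega_r$, for a reaction $r : C_a \to C_b \in \mathscr{R}_i$, to $\omega_{C_b} - \omega_{C_a}$. Every complex appearing here belongs to $\mathscr{C}_i$, so each generator of $\text{Im } I_{a,i}$ lies in the coordinate subspace $\mathbb{R}^{\mathscr{C}_i} := \text{span}\{\omega_C \mid C \in \mathscr{C}_i\} \subseteq \mathbb{R}^\mathscr{C}$. Hence $\text{Im } I_{a,i} \subseteq \mathbb{R}^{\mathscr{C}_i}$ for each $i$.

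Next I would invoke the $\mathscr{C}$-decomposition hypothesis: the complex sets $\mathscr{C}_1, \ldots, \mathscr{C}_k$ are pairwise disjoint, so the corresponding coordinate subspaces satisfy $\mathbb{R}^{\mathscr{C}_i} \cap \sum_{j \ne i} \mathbb{R}^{\mathscr{C}_j} = 0$ (a vector on the left is supported on $\mathscr{C}_i$, a vector on the right on $\bigcup_{j\ne i}\mathscr{C}_j$, and these supports are disjoint). That is, the blocks $\mathbb{R}^{\mathscr{C}_i}$ sit in direct-sum position inside $\mathbb{R}^\mathscr{C}$. Since each $\text{Im } I_{a,i}$ is contained in the $i$-th of these independent blocks, the family $\{\text{Im } I_{a,i}\}$ is itself independent, and therefore the sum $\sum_i \text{Im } I_{a,i} = \text{Im } I_a$ is direct, which is exactly incidence-independence. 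Taking dimensions then recovers the equivalent criterion $n - l = \sum_i (n_i - l_i)$ quoted in the excerpt.

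The argument is short, and the only step carrying any content is the observation that the images land in disjoint coordinate blocks; once that is in place, independence of subspaces supported on disjoint coordinates is automatic, so I do not expect a genuine obstacle. The one point worth stating carefully is that it is the disjointness of the \emph{complex} supports $\mathscr{C}_i$ --- not of the reaction sets, which holds for any decomposition --- that drives the proof.
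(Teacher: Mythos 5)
Your proof is correct, but it takes a genuinely different route from the paper's. You argue directly from the definition: each $\text{Im } I_{a,i}$ is generated by vectors $\omega_{C_b}-\omega_{C_a}$ with both complexes in $\mathscr{C}_i$, hence lies in the coordinate block $\mathbb{R}^{\mathscr{C}_i}$, and the pairwise disjointness of the $\mathscr{C}_i$ forces these blocks --- and therefore the images --- into direct-sum position. The paper instead invokes two external results from Farinas et al.: that a decomposition is a $\mathscr{C}$-decomposition if and only if it is a coarsening of the linkage class decomposition, and that any coarsening of an incidence-independent decomposition is incidence-independent (with the linkage class decomposition itself serving as the incidence-independent base case). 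Your argument is self-contained and more elementary --- it needs nothing beyond the definition of the incidence map and disjointness of supports --- whereas the paper's proof buys a structural insight (every $\mathscr{C}$-subnetwork is a union of linkage classes) and reuses machinery already developed in the cited work. Your closing remark correctly isolates the crux: it is the disjointness of the complex sets, not the reaction sets, that makes the sum direct, and that is precisely what fails for a general decomposition.
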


\begin{proof}
Theorem 5 in \cite{FML2018} characterizes $\mathscr{C}$-decompositions as follows: a decomposition is a $\mathscr{C}$-decomposition if and only if it is a coarsening of the linkage class decomposition. Basically, this means that any subnetwork of a $\mathscr{C}$-decomposition is the disjoint union of (some) linkage classes. Since it is also shown (Proposition 3 of \cite{FML2018}) that any coarsening of an incidence-independent decomposition is incidence-independent, it follows that any $\mathscr{C}$-decomposition is incidence-independent.
\end{proof}

\begin{corollary}
If the $\mathscr{F}$-decomposition of a CRN is a $\mathscr{C}$-decomposition, then $2w \le n$.
\end{corollary}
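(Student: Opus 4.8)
The plan is to exploit the defining feature of a $\mathscr{C}$-decomposition, namely that the complex sets of distinct subnetworks are disjoint, together with the elementary fact that every fundamental class must contain at least one genuine (non-self-loop) reaction. First I would observe that when the $\mathscr{F}$-decomposition is a $\mathscr{C}$-decomposition, the complex sets $\mathscr{C}_i$ of the subnetworks partition $\mathscr{C}$. Indeed, by the definition of a CRN every complex $C \in \mathscr{C}$ occurs as the reactant or product of some reaction, hence lies in the complex set of whichever subnetwork contains that reaction; disjointness of the $\mathscr{C}_i$ then forces each complex into exactly one subnetwork. Consequently $n = \sum_i n_i$, the sum ranging over all subnetworks of the $\mathscr{F}$-decomposition (including the zero class $C_0$ if it is nonempty).

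Next I would bound each nonzero subnetwork from below. Each of the $w$ nonzero fundamental classes is, by construction, a nonempty set of reactions, so it contains some reaction $y \to y'$. The CRN axiom $(C_i, C_i) \notin \mathscr{R}$ guarantees $y \neq y'$, so the corresponding subnetwork already involves the two distinct complexes $y$ and $y'$; therefore $n_i \geq 2$ for each nonzero class $\mathscr{N}_i$.

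Combining the two observations gives the result: discarding the (possibly empty) contribution of the zero class $C_0$ and retaining only the $w$ nonzero fundamental classes,
\[
n \;=\; \sum_i n_i \;\ge\; \sum_{i=1}^{w} n_i \;\ge\; \sum_{i=1}^{w} 2 \;=\; 2w,
\]
which is the claimed inequality $2w \le n$.

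I do not expect a serious obstacle here, since the argument is a counting estimate; the one point requiring care is the identity $n = \sum_i n_i$. This is precisely where the $\mathscr{C}$-decomposition hypothesis is essential — for a general decomposition a complex may be shared among several subnetworks, so only $n \le \sum_i n_i$ holds and the lower bound argument collapses. Verifying that disjointness of the $\mathscr{C}_i$ upgrades this to an equality (using that each complex participates in at least one reaction) is therefore the key step, and it follows cleanly from the characterization of $\mathscr{C}$-decompositions as coarsenings of the linkage class decomposition recalled just above.
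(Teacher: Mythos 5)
Your proof is correct, but it takes a genuinely different route from the paper's. The paper combines two pieces of its decomposition machinery: from the characterization of $\mathscr{C}$-decompositions as coarsenings of the linkage class decomposition it gets $w \le k \le l$, and from incidence-independence (via $n - l = \sum (n_i - l_i) \ge w$) it gets $w + l \le n$; adding these yields $2w \le n$. You instead argue directly at the level of complexes: disjointness of the $\mathscr{C}_i$ together with the CRN axiom that every complex participates in some reaction gives the exact count $n = \sum_i n_i$, and the no-self-loop axiom $(C,C) \notin \mathscr{R}$ gives $n_i \ge 2$ for each of the $w$ nonzero classes, whence $n \ge 2w$. Your argument is more elementary --- it needs only the definition of a $\mathscr{C}$-decomposition and the CRN axioms, not the incidence-independence proposition or the coarsening theorem --- and it in fact proves the slightly stronger statement $2k \le n$ for an arbitrary $\mathscr{C}$-decomposition with $k$ subnetworks. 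One small remark: in your closing paragraph you attribute the identity $n = \sum_i n_i$ to the coarsening characterization, but as your own second paragraph shows, it already follows from disjointness of the $\mathscr{C}_i$ plus the axiom that every complex lies in some reaction; the coarsening result is not actually needed anywhere in your argument.
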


\begin{proof}
For any $\mathscr{C}$-decomposition, the number of subnetworks $k \le l$, hence $w \le l$.  Since a $\mathscr{C}$-decomposition is incidence-independent, we have $w + l \le n$. Combining the two inequalities shows the claim.
\end{proof}

\begin{example}
The $\mathscr{F}$-decomposition of any (embedded) S-system CRN with m irreversible species and distinct complexes consists of subnetworks with 2 linkage classes containing the inflow and outflow reaction for each species respectively, and hence is a $\mathscr{C}$-decomposition. Since each CRN has $4m$ complexes, we will denote this subset by Ssys$_{4m}$.
\end{example}

\begin{example}
For any (embedded) S-system CRN with m reversible species, the $\mathscr{F}$-decomposition coincides with the linkage class decomposition. Each such CRN has $2m$ complexes and the subset will be denoted by Ssys$_{2m}$.
\end{example}

A further class of CRNs whose $\mathscr{F}$-decompositions are $\mathscr{C}$-decompositions is provided by a special case of Theorem \ref{cycles:graph} in Section \ref{decomposition:types:network:prop}.

\begin{example}
Let the CRN $\mathscr{N} =\{\mathscr{N}_i | \mathscr{N}_i = (\mathscr{C}_i, \mathscr{R}_i)\}$ with sequence of long monomolecular directed cycles, i.e., of length 
$\ge$ 3, and $|\mathscr{C}_i \cap \mathscr{C}_j| = 0$ for distinct $i,j = 1,2,...,k$.
\end{example}

It is shown in Theorem \ref{cycles:graph}, that the $\mathscr{F}$-decomposition consists precisely of the $C_i$, which are simultaneously the linkage classes of the network.

We now discuss the CRN of the Heck et al. model of ``terrestrial carbon recovery'' from \cite{hernandez}.

\begin{example}
The CRN is given by:
$$ \begin{array}{lll}
R_1: A_1 +2A_2 \to 2A_1 + A_2 & \ \ \ & R_6: A_1 + 2A_4  \to 2A_1 + A_4\\
R_2: A_1 + A_2  \to 2A_2  & \ \ \ & R_7: A_1 + A_4  \to 2A_4 \\
R_3: A_2  \to A_3 & \ \ \  & R_8: A_4  \to A_3\\
R_4: A_3  \to A_2  & \ \ \  & R_9: A_3 \to A_4  \\
R_5: A_4 + A_5  \to 2A_4 & \ \ \  & R_{10}: A_1 + A_2 + A_4 \to A_5 + A_2+A_4\\
\end{array}$$
\end{example}

The $\mathscr{F}$-decomposition is a curiosity: it is almost a $\mathscr{C}$-decomposition, with 4 of 6 fundamental classes coinciding with the linkage class reaction sets $\mathscr{C}_1 =  \{R_1\}$, $\mathscr{C}_2 = \{R_2\}$, $\mathscr{C}_3 = \{R_3, R_4, R_8, R_9\}$, and $\mathscr{C}_4 = \{ R_6\}$. For the remaining two, $\mathscr{C}_5 = \{R_5, R_{10}\} \ne \{R_5, R_7\}$ and $\mathscr{C}_6 = \{R_7\} \ne \{R_{10}\}$, but fortuitously,  the images of their incidence maps are respectively isomorphic. Hence the $\mathscr{F}$-decomposition is also incidence-independent.

As the final example in this section, we present a subset of S-system CRNs whose $\mathscr{F}$-decompositions are always incidence-independent but which are not $\mathscr{C}$-decompositions.

\begin{example}
The set of (embedded) S-system CRNs with self-regulating species and non-regulated outflows have $\mathscr{F}$-subnetworks of the form $\left\{ {0 \leftarrow {x_i} \to 2{x_i}} \right\}$, i.e., there are $2m+1$ complexes and one linkage class, inferring $n - l = 2m$. Clearly, it is not a $\mathscr{C}$-decomposition. Since each $\mathscr{F}$-subnetwork has 3 complexes and a linkage class, the sum of $n_i - l_i = 2m$, too, showing incidence-independence. We denote this subset as Ssys$_{2m+1}$.
\end{example}

\subsubsection{CRNs of phosphorylation/dephosphorylation systems have incidence-independent $\mathscr{F}$-decompositions}
\label{sect:PDinciinde}
In this section, we use the computations for $\mathscr{F}$-decompositions of the CRNs of PD systems in Section \ref{PD:independent} to show that they are also incidence-independent (hence bi-independent). For multisite processive PD systems, this is trivial since the $\mathscr{F}$-decomposition has only one subnetwork.  The cases of dual-site ERK mechanism and mixed-mechanism are discussed in the following example.

\begin{example}
The CRN of dual-site PD with the mixed-mode mechanism is given by:
\[\begin{array}{c}
{S_0} + K \mathbin{\lower.3ex\hbox{$\buildrel\textstyle\rightarrow\over
{\smash{\leftarrow}\vphantom{_{\vbox to.5ex{\vss}}}}$}} {S_0}K \to {S_1}K \to {S_2} + K\\
{S_2} + F \mathbin{\lower.3ex\hbox{$\buildrel\textstyle\rightarrow\over
{\smash{\leftarrow}\vphantom{_{\vbox to.5ex{\vss}}}}$}} {S_2}F \to {S_1} + F \mathbin{\lower.3ex\hbox{$\buildrel\textstyle\rightarrow\over
{\smash{\leftarrow}\vphantom{_{\vbox to.5ex{\vss}}}}$}} {S_1}F \to {S_0} + F
\end{array}\]
in which the $\mathscr{F}$-decomposition consists only of the whole network and hence incidence-independent.
\end{example}

The following proposition completes the picture by providing the proof in the multisite distributive PD CRN to be incidence-independent.

\begin{proposition}
\label{PDinciinde}
The $k$-site distributive phosphorylation/dephosphorylation:
\[\begin{array}{c}
{S_0} + K \mathbin{\lower.3ex\hbox{$\buildrel\textstyle\rightarrow\over
{\smash{\leftarrow}\vphantom{_{\vbox to.5ex{\vss}}}}$}} {S_0}K{\mkern 1mu}  \to {S_1} + K{\mkern 1mu}  \mathbin{\lower.3ex\hbox{$\buildrel\textstyle\rightarrow\over
{\smash{\leftarrow}\vphantom{_{\vbox to.5ex{\vss}}}}$}} {S_1}K \to {S_2} + K{\mkern 1mu}  \mathbin{\lower.3ex\hbox{$\buildrel\textstyle\rightarrow\over
{\smash{\leftarrow}\vphantom{_{\vbox to.5ex{\vss}}}}$}} ... \to {S_k} + K\\
{S_k} + F \mathbin{\lower.3ex\hbox{$\buildrel\textstyle\rightarrow\over
{\smash{\leftarrow}\vphantom{_{\vbox to.5ex{\vss}}}}$}} ...{\mkern 1mu} {\mkern 1mu}  \to {\mkern 1mu} {S_2} + F \mathbin{\lower.3ex\hbox{$\buildrel\textstyle\rightarrow\over
{\smash{\leftarrow}\vphantom{_{\vbox to.5ex{\vss}}}}$}} {S_2}F \to {S_1} + F \mathbin{\lower.3ex\hbox{$\buildrel\textstyle\rightarrow\over
{\smash{\leftarrow}\vphantom{_{\vbox to.5ex{\vss}}}}$}} {S_1}F \to {S_0} + F
\end{array}\]
has incidence-independent $\mathscr{F}$-decomposition.
\end{proposition}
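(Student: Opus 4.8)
The plan is to verify incidence-independence directly via the criterion recalled earlier in the excerpt, namely that a decomposition is incidence-independent if and only if $n - l = \sum_i (n_i - l_i)$. Since Proposition \ref{PDinde} has already identified the fundamental classes and established that the $\mathscr{F}$-decomposition consists of the $k$ subnetworks $\mathscr{N}_i$ for $i = 0,1,\dots,k-1$, the remaining work is a counting exercise on complexes and linkage classes.

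First I would tally the data for each subnetwork $\mathscr{N}_i$. By Proposition \ref{PDinde}(i), $\mathscr{N}_i$ consists of the three complexes $S_i + K$, $S_iK$, $S_{i+1} + K$ (forming one connected chain) together with $S_{i+1} + F$, $S_{i+1}F$, $S_i + F$ (forming a second connected chain). Hence $n_i = 6$ and $l_i = 2$, so $n_i - l_i = 4$ and $\sum_{i=0}^{k-1}(n_i - l_i) = 4k$.

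Next I would count for the whole network. The kinase branch contributes the complexes $S_i + K$ for $i = 0,\dots,k$ together with $S_iK$ for $i = 0,\dots,k-1$, and the phosphatase branch contributes $S_i + F$ for $i = 0,\dots,k$ together with $S_iF$ for $i = 1,\dots,k$. Since no complex contains both $K$ and $F$, these four lists are pairwise disjoint, giving $n = (k+1) + k + (k+1) + k = 4k + 2$. Each branch is a single connected chain, and because the two branches share no complex they form distinct connected components, so $l = 2$ and $n - l = 4k$. Comparing the two computations yields $n - l = 4k = \sum_i(n_i - l_i)$, which is precisely the incidence-independence criterion, and the proof is complete.

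The only genuine care required is in the bookkeeping: one must confirm that the kinase and phosphatase branches share no complex (so that $l = 2$ rather than $1$), and that each $\mathscr{N}_i$ splits into exactly two linkage classes rather than being connected through a shared species — here it is essential that connectivity is measured at the level of complexes, not species. No linear algebra beyond what Proposition \ref{PDinde} already supplies is needed. As an alternative route, since independence is already in hand, I could instead invoke the proposition stating that an independent decomposition with $\sum_i \delta_i = \delta$ is bi-independent: using $s_i = 3$ and $s = 3k$ from Proposition \ref{PDinde}, one checks $\delta_i = 6 - 2 - 3 = 1$ and $\delta = (4k+2) - 2 - 3k = k = \sum_i \delta_i$. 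Either way the conclusion follows, but the direct $n - l$ count is the most transparent.
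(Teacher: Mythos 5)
Your proposal is correct and follows essentially the same route as the paper: it invokes the fundamental classes from Proposition \ref{PDinde}, computes $n_i - l_i = 6 - 2 = 4$ for each of the $k$ subnetworks, counts $n = 4k+2$ and $l = 2$ for the full network, and concludes from $\sum_i (n_i - l_i) = 4k = n - l$. Your proposal is somewhat more careful than the paper's version in making the complex-disjointness of the two branches explicit, and the alternative deficiency-based check is a nice cross-validation, but the core argument is identical.
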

\begin{proof}
In Proposition \ref{PDinde}, it was shown that the fundamental classes generating the $\mathscr{F}$-decomposition are of the form \[\begin{array}{*{20}{c}}
{{S_i} + K \mathbin{\lower.3ex\hbox{$\buildrel\textstyle\rightarrow\over
{\smash{\leftarrow}\vphantom{_{\vbox to.5ex{\vss}}}}$}} {S_i}K \to {S_{i + 1}} + K}\\
{{S_{i + 1}} + F \mathbin{\lower.3ex\hbox{$\buildrel\textstyle\rightarrow\over
{\smash{\leftarrow}\vphantom{_{\vbox to.5ex{\vss}}}}$}} {S_{i + 1}}F \to {S_i} + F}
\end{array}{\text{ for }}i = 0,1,...,k - 1.\]
Since the network has $2(2k)=4k$ reactions, and the reactants and products are distinct, it has $4k+2$ complexes. Now, \[\sum {\left( {{n_i} - {l_i}} \right)}  = \sum {\left( {6 - 2} \right)}  = 4k = \left( {4k + 2} \right) - 2 = n - l.\]
\end{proof}

Clearly, the image of $I_{a,\mathscr{O}}$, the restriction of the incidence map to $R^\mathscr{O}$, is equal to the image of $I_a$, since the image of a reverse reaction is just the negative of the forward reaction and the linkage classes are not changed. Hence,  $r_{\text{irr}} + r_{\text{rev}} - (n - l) \ge 0$, being the $\text{dim } \text{Ker} I_{a,\mathscr{O}}$, or $n - l \le r_{\text{irr}} + r_{\text{rev}}$. As shown in propositions above, for independent and incidence-independent $\mathscr{F}$-decompositions, $w \le n - l$. To date, we have not yet found a CRN with a dependent and incidence-dependent $\mathscr{F}$-decomposition with $w > n - l$, so one can ask the question whether $n - l$ is an upper bound for $w$ in general. 

\section{Types of $\mathscr{F}$-decomposition and Network Properties}
\label{decomposition:types:network:prop}
Ji classified the subnetworks occurring in a $\mathscr{P}$-decomposition into 3 types and summarized their properties as follows (Proposition 2.5.4 in \cite{ji}).

\begin{lemma} {\bf \cite{ji}}
Let $\mathscr{N}=\left(\mathscr{S},\mathscr{C},\mathscr{R}\right)$ be a CRN and $\mathscr{O}$ be an orientation. Let $\mathscr{{N}}_{\mathscr{O},i}$ for $i=0,1,2,...,w$ be defined as the subnetwork generated by all reactions in $P_i$. Then one of the following holds:
\begin{itemize}
\item [i.] The reaction vectors for $\mathscr{{N}}_{\mathscr{O},i}$ are linearly independent, and the subnetwork $\mathscr{{N}}_{\mathscr{O},i}$ based on $P_i$ forms a forest (i.e., a graph with no cycle) with deficiency 0.
\item [ii.] The reaction vectors are minimally dependent, and the subnetwork $\mathscr{{N}}_{\mathscr{O},i}$ based on $P_i$ forms a forest with deficiency 1.
\item [iii.] The reaction vectors are minimally dependent, and the subnetwork $\mathscr{{N}}_{\mathscr{O},i}$ based on $P_i$ forms a big cycle (with at least three vertices) with deficiency 0.
\end{itemize}
\label{ji254}
\end{lemma}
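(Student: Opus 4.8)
The plan is to work directly from the definition of the equivalence classes $P_i$ in terms of a basis $\{v^l\}_{l=1}^d$ of $\operatorname{Ker} L_\mathscr{O}$, translating the proportionality condition on the ``rows'' $(v^1_r,\dots,v^d_r)$ into statements about the linear dependencies among the reaction vectors $c_r := y'-y$ of the reactions $r\in P_i$. The central observation is that a coefficient vector $\mu\in\mathbb{R}^\mathscr{O}$ encodes a dependency $\sum_r \mu_r c_r = 0$ exactly when $\mu\in\operatorname{Ker} L_\mathscr{O}$, i.e. $\mu_r = \sum_l \beta_l v^l_r$ for some $\beta\in\mathbb{R}^d$. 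First I would record that $P_0$ consists of the reactions whose row vanishes; these carry coefficient $0$ in every element of $\operatorname{Ker} L_\mathscr{O}$, so their reaction vectors are linearly independent, placing $\mathscr{N}_{\mathscr{O},0}$ under case (i).

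The key step, and the one I expect to be the main obstacle, is the following rigidity claim for $i\neq 0$: if the reaction vectors $\{c_r : r\in P_i\}$ are linearly dependent, then they are \emph{minimally} dependent. I would argue by contradiction. A non-minimal dependency would contain a proper subset $C\subsetneq P_i$ that is itself minimally dependent, hence carrying a dependency $\mu$ with $\operatorname{supp}\mu = C$ and $\mu_r\neq 0$ for every $r\in C$. Choosing $r_0\in P_i\setminus C$ and any $r_1\in C$, the defining proportionality of their rows yields a scalar $\alpha\neq 0$ with $v^l_{r_0} = \alpha\, v^l_{r_1}$ for all $l$; writing $\mu_r = \sum_l \beta_l v^l_r$ then gives $\mu_{r_0} = \alpha\,\mu_{r_1}\neq 0$, contradicting $r_0\notin C$. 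Thus each non-zero class is either independent (case (i)) or a single minimal dependency, of rank $|P_i|-1$.

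Next I would recover the graph structure from this algebra, using that any cycle in the underlying graph of $\mathscr{N}_{\mathscr{O},i}$ telescopes to a dependency $\sum_j \epsilon_j\, c_{r_j}=0$ supported exactly on its edges. If the reaction vectors are independent there can be no such cycle, so the subnetwork is a forest (case (i), including $P_0$). If they are minimally dependent, any graph cycle is a dependency on its edge set $\subseteq P_i$, which by minimality must equal all of $P_i$; hence either the graph is acyclic (a forest, case (ii)) or its entire edge set forms one cycle. In the latter situation, since no complex is isolated and an orientation admits neither loops nor $2$-cycles, the subnetwork is a cycle on $k\geq 3$ vertices (case (iii)).

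Finally I would close with the deficiency bookkeeping via $\delta = n-l-s$. For a forest the edge count gives $|P_i| = n-l$, so $s = |P_i|$ in case (i) yields $\delta = 0$, while $s = |P_i|-1$ in case (ii) yields $\delta = 1$. For a $k$-cycle one has $n=k$ and $l=1$, and minimal dependence gives $s = k-1$, so $\delta = 0$, completing case (iii). The only genuinely delicate point is the rigidity claim of the second paragraph; once it is established, the graph-theoretic dichotomy and the deficiency computations are routine.
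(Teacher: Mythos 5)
Your argument is correct, but note that the paper itself offers no proof to compare against: Lemma \ref{ji254} is imported verbatim as Proposition 2.5.4 of Ji's thesis \cite{ji}, so your write-up is a self-contained reconstruction rather than an alternative to an in-paper argument. You have correctly identified the crux, namely the rigidity claim that a dependent nonzero class is automatically \emph{minimally} dependent, and your proof of it is sound: any dependency supported on a subset $C$ of $P_i$ lies in $\operatorname{Ker} L_{\mathscr{O}}$, hence is a combination $\sum_l \beta_l v^l$, and the pairwise proportionality $v^l_{r_0}=\alpha v^l_{r_1}$ ($\alpha\neq 0$) defining $P_i$ forces the support to be all of $P_i$ or empty. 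Two small points are worth making explicit when you write this up: first, that a dependent but not minimally dependent set contains a \emph{proper} circuit (a minimal dependent subset of a proper dependent subset), which is what licenses the choice of $C\subsetneq P_i$ with full support on $C$; second, in case (iii), that the subnetwork has no vertices off the cycle because $\mathscr{N}_{\mathscr{O},i}$ is generated by the reactions of $P_i$, so once the cycle's edge set exhausts $P_i$ the subnetwork \emph{is} the cycle. Your handling of $P_0$ (every kernel element vanishes there, so its reaction vectors are independent), the exclusion of cycles of length $1$ and $2$ via the definition of an orientation, and the deficiency bookkeeping $\delta_i=n_i-l_i-s_i$ with $s_i=|P_i|$ or $|P_i|-1$ are all correct.
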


We will denote the subnetwork classes in i, ii, and iii of Lemma \ref{ji254} as Type I, Type II and Type III subnetworks respectively. Since our focus is on the $\mathscr{F}$-decomposition, we extend this classification as follows: an $\mathscr{F}$-subnetwork is of type I, II or III if it contains a $\mathscr{P}$-subnetwork of type I, II or III, respectively. Note that while the  characterization of Type I and II $\mathscr{P}$-subnetworks as forests is lost, that Type III subnetwork as a big cycle is retained in the Type III $\mathscr{F}$-subnetwork. More importantly, the deficiency of each subnetwork type remains the same.  We assign the numbers of fundamental classes for Types I, II and III with the symbols $w_I$, $w_{II}$ and $w_{III}$, respectively. 

\begin{definition}
An $\mathscr{F}$-decomposition is said to be
\begin{itemize}
\item[i.] {\bf Type I} if it contains Type I subnetwork only.
\item[ii.] {\bf Type II} if it contains Type II subnetwork only.
\item[iii.] {\bf Type III} if it contains Type III subnetwork only.
\end{itemize}
\end{definition}

The first important consequence of the above classification is the following:

\begin{proposition}
If a CRN has an independent $\mathscr{F}$-decomposition, then its deficiency $\delta \le w_{II}$.
\end{proposition}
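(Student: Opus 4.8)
The plan is to bound the network's deficiency by the sum of the deficiencies of the $\mathscr{F}$-subnetworks, and then to evaluate that sum using Ji's type classification recorded in Lemma \ref{ji254}. The whole argument is essentially a substitution into an inequality that is already available, so the work lies in correctly tallying the per-subnetwork deficiencies.

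First I would invoke the independence hypothesis together with Lemma 1 of \cite{fortun2}, recalled in Section \ref{sect:decomposition}, which states that for any independent decomposition $\delta \le \delta_1 + \delta_2 + \cdots + \delta_k$. Applying this to the $\mathscr{F}$-decomposition gives $\delta \le \sum_i \delta_i$, where the index $i$ runs over all fundamental classes, including the zero class $C_0$ when it is non-empty. Since the $\mathscr{F}$-decomposition is assumed independent, this bound is legitimate.

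Next I would compute each $\delta_i$. Using Lemma \ref{ji254} and the observation (made just before the type definitions) that the deficiency of an $\mathscr{F}$-subnetwork equals that of the $\mathscr{P}$-subnetwork it contains, every Type I and every Type III $\mathscr{F}$-subnetwork has deficiency $0$, while every Type II $\mathscr{F}$-subnetwork has deficiency $1$. The subnetwork generated by the zero class $C_0$ consists of reactions whose characteristic functions vanish in the factor space; these reaction vectors take part in no dependency relation in ${\rm Ker}\, L_\mathscr{O}$, so they are linearly independent, whence this subnetwork is of Type I and also has deficiency $0$. Summing over the $w = w_I + w_{II} + w_{III}$ non-zero fundamental classes and the zero class, only the Type II subnetworks contribute, each adding $1$, so $\sum_i \delta_i = w_{II}$. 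Combining with the first step yields $\delta \le w_{II}$.

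The only delicate point is the bookkeeping for the zero fundamental class $C_0$: one must verify that it contributes $0$ to the sum, which follows from the linear independence of its reaction vectors noted above. Everything else is a direct evaluation of the already-established inequality $\delta \le \sum_i \delta_i$, so I do not expect any genuine obstacle beyond this accounting.
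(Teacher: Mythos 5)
Your proof is correct and follows essentially the same route as the paper: invoke the inequality $\delta \le \delta_1 + \cdots + \delta_w$ for independent decompositions and then use Ji's type classification to see that only Type~II subnetworks (each of deficiency $1$) contribute. Your extra care with the zero fundamental class $C_0$ (whose reaction vectors are indeed linearly independent, since every element of ${\rm Ker}\,L_\mathscr{O}$ vanishes on its coordinates) is a welcome refinement of a step the paper leaves implicit.
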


\begin{proof}
Note that the $\mathscr{F}$-decomposition is independent, so $\delta \le \delta _1 + \delta _2 ... + \delta _w$. Since any Type I or Type III subnetworks have zero deficiency, we obtain the claim.
\end{proof}

\begin{corollary}
A CRN whose $\mathscr{F}$-decomposition has no Type II subnetworks and is independent has zero deficiency. In particular, this is true for CRNs with Type I and Type III independent $\mathscr{F}$-decompositions.
\label{cor:zero:def}
\end{corollary}

The special case of the corollary above is significant because the properties of the positive equilibria sets of deficiency zero networks are well-known. In particular:

\begin{itemize}
\item[i.] any positive equilibrium is complex-balanced \cite{feinberg_DOA},
\item[ii.] no positive equilibrium exists if the network is not weakly reversible (Deficiency Zero Theorem), and
\item[iii.] for some subsets of the power-law kinetics, existence and parametrization results on equilibria are proven, i.e., Deficiency Zero Theorem for MAK (Feinberg-Horn-Jackson), PL-RDK with zero kinetic deficiency (M${\rm\ddot{u}}$ller-Regensburger \cite{MURE2014}), PL-TIK (Talabis et al. \cite{TAM2018}), and PL-NDK with special independent decompositions (Fortun et al. \cite{fortun2}).
\end{itemize}

\subsection{Independent Type I $\mathscr{F}$-decompositions}
\begin{proposition}
Let $s$ be the rank of a network $\mathscr{N}$. If for an orientation $\mathscr{O}$, $s=|\mathscr{O}|$, then $\delta \left( \mathscr{N} \right)=0$.
\end{proposition}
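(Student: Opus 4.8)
The plan is to exploit the fact that the orientation map $L_\mathscr{O}$ factors through the incidence map, and to translate the rank hypothesis into a statement about kernels. Recall from the proposition on $\mathscr{P}$-decompositions that $\dim \operatorname{Ker} L_\mathscr{O} = r_{\text{irr}} + r_{\text{rev}} - s = |\mathscr{O}| - s$. Hence the hypothesis $s = |\mathscr{O}|$ is equivalent to $\operatorname{Ker} L_\mathscr{O} = \{0\}$, i.e.\ $L_\mathscr{O}$ is injective. The goal $\delta(\mathscr{N}) = n - l - s = 0$ then reduces to showing $n - l = |\mathscr{O}|$.

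First I would verify the identity $L_\mathscr{O} = Y \circ I_{a,\mathscr{O}}$, where $I_{a,\mathscr{O}}$ denotes the restriction of the incidence map to $\mathbb{R}^\mathscr{O}$: for a reaction $r : C_i \to C_j$ in $\mathscr{O}$ one has $I_{a,\mathscr{O}}(\omega_r) = \omega_{C_j} - \omega_{C_i}$ and $Y(\omega_{C_j} - \omega_{C_i}) = C_j - C_i = L_\mathscr{O}(\omega_r)$. Consequently $\operatorname{Ker} I_{a,\mathscr{O}} \subseteq \operatorname{Ker} L_\mathscr{O}$, and combining this with the injectivity of $L_\mathscr{O}$ forces $\operatorname{Ker} I_{a,\mathscr{O}} = \{0\}$, so $I_{a,\mathscr{O}}$ is injective as well.

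Next I would apply the Rank-Nullity Theorem to $I_{a,\mathscr{O}}$. Injectivity gives $\dim \operatorname{Im} I_{a,\mathscr{O}} = |\mathscr{O}|$. As noted in the excerpt, the image of a reverse reaction is the negative of that of the forward reaction and the linkage classes are unchanged, so $\operatorname{Im} I_{a,\mathscr{O}} = \operatorname{Im} I_a$; and the rank of the incidence map is $n - l$. Therefore $|\mathscr{O}| = n - l$, and together with the hypothesis $s = |\mathscr{O}|$ we conclude $\delta(\mathscr{N}) = n - l - s = |\mathscr{O}| - s = 0$.

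The routine-but-essential ingredients are the two standard facts about the incidence map, namely $\dim \operatorname{Im} I_a = n - l$ and that restricting to an orientation does not shrink this image; both are already invoked elsewhere in the paper. The one point that must be gotten right is the \emph{direction} of the kernel containment: injectivity of $L_\mathscr{O}$ is transferred to $I_{a,\mathscr{O}}$ precisely because the factorization $L_\mathscr{O} = Y \circ I_{a,\mathscr{O}}$ yields $\operatorname{Ker} I_{a,\mathscr{O}} \subseteq \operatorname{Ker} L_\mathscr{O}$ and not the reverse inclusion. An equivalent graph-theoretic phrasing I would keep in reserve is that injectivity of $I_{a,\mathscr{O}}$ makes the incidence vectors $\omega_{C_j} - \omega_{C_i}$ independent, so $\mathscr{N}_\mathscr{O}$ is a forest with $|\mathscr{O}| = n - l$ edges; invoking the corollary $\delta(\mathscr{N}) = \delta(\mathscr{N}_\mathscr{O})$ then again gives deficiency zero.
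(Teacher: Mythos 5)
Your proof is correct, but it takes a genuinely different route from the paper's. The paper starts from the same observation (that $s = |\mathscr{O}|$ forces the $|\mathscr{O}|$ reaction vectors spanning $S$ to be linearly independent) but then invokes Ji's structural classification (Lemma \ref{ji254}): linear independence puts every $\mathscr{P}$-subnetwork in Type I, each of deficiency zero, and since the resulting $\mathscr{P}$-decomposition is automatically independent, Corollary \ref{cor:zero:def} gives $\delta(\mathscr{N}_\mathscr{O}) = 0 = \delta(\mathscr{N})$. You instead bypass the Type I/II/III machinery entirely and argue by pure linear algebra: the factorization $L_\mathscr{O} = Y \circ I_{a,\mathscr{O}}$ transfers injectivity from $L_\mathscr{O}$ to $I_{a,\mathscr{O}}$ (and you correctly identify that the containment $\operatorname{Ker} I_{a,\mathscr{O}} \subseteq \operatorname{Ker} L_\mathscr{O}$ goes the right way for this), whence $|\mathscr{O}| = \dim \operatorname{Im} I_{a,\mathscr{O}} = \dim \operatorname{Im} I_a = n - l$ and $\delta = n - l - s = 0$. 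Both arguments are sound; yours is more self-contained, resting only on $N = YI_a$, rank--nullity, and the two standard incidence-map facts the paper already invokes elsewhere, while the paper's version has the expository advantage of exercising the subnetwork-type classification that is the organizing theme of Section \ref{decomposition:types:network:prop}. Your graph-theoretic fallback (injective $I_{a,\mathscr{O}}$ means $\mathscr{N}_\mathscr{O}$ is a forest with $n - l$ edges) is essentially Ji's characterization of Type I subnetworks, so it closes the gap back to the paper's viewpoint.
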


\begin{proof}
Let $s$ be the rank of a network $\mathscr{N}$ with an orientation $\mathscr{O}$. Note that $s=\dim S$. Each reaction in $\mathscr{O}$ has a corresponding reaction vector. With the assumption that $s=|\mathscr{O}|$, the reaction vectors are linearly independent. From Lemma \ref{ji254}, there are Type I subnetworks only in the $\mathscr{P}$-decomposition. By definition, the $\mathscr{P}$-decomposition is of Type I. From Corollary \ref{cor:zero:def}, where the number of Type II subnetworks in the $\mathscr{P}$-decomposition is zero, it follows that $\delta \left( \mathscr{N} \right)=0$.
\end{proof}

\begin{example}
If the network has only irreversible reactions, then the only orientation is the whole set of reactions. In this case, a network with independent Type I $\mathscr{P}$-decomposition is a trivial nullspace network.
\end{example}

\begin{example}
The CRNs of the set Ssys$_{2m}$ introduced in Section \ref{sufficientconditionfcdec} all have Type I $\mathscr{F}$-decompositions.
\end{example}

\begin{proposition}
Let $\left(\mathscr{S},\mathscr{C},\mathscr{R},K\right)$ be a PL-RDK system such that there is at least one irreversible reaction in $\mathscr{R}$. Let $\mathscr{{N}}_{\mathscr{O},i}$ for $i=0,1,2,...,w$ be defined as the subnetwork generated by all reactions in $P_i$. If the $\mathscr{P}$-decomposition is independent, and the reaction vectors in $\mathscr{{N}}_{\mathscr{O},i}$ are linearly independent for each $i$, then the system does not have the capacity to admit multiple equilibria.
\label{independent_multiple}
\end{proposition}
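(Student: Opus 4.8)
The plan is to prove something slightly stronger than the stated conclusion, namely that under these hypotheses the system has \emph{no} positive equilibrium whatsoever for any rate vector, which makes the absence of multiple equilibria immediate. The mechanism is that linear independence of the reaction vectors, the strict positivity of power-law rate functions, and the presence of an irreversible reaction are jointly incompatible with $NK(x)=0$.

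First I would convert the hypotheses into structural facts about the decomposition. Since the reaction vectors of each $\mathscr{N}_{\mathscr{O},i}$ are linearly independent, Lemma \ref{ji254} forces every $\mathscr{P}$-subnetwork to be of Type I. Because the $\mathscr{P}$-decomposition is assumed independent, Theorem \ref{PifC}(i) yields that the induced $\mathscr{F}$-decomposition $\mathscr{N}=\mathscr{N}_1 \cup \cdots \cup \mathscr{N}_k$ is independent as well. I would also record that the \emph{forward} reaction vectors of each fundamental class $C_i$ (those inherited from the corresponding $P_i$) stay linearly independent, because passing from a $\mathscr{P}$-subnetwork to its $\mathscr{F}$-subnetwork only adjoins reverse reactions, whose reaction vectors are negatives of forward ones and hence do not enlarge the span.

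Next I would localize the irreversible reaction. Every irreversible reaction lies in the orientation $\mathscr{O}$, hence in some class $P_j$, and therefore in the fundamental class $C_j$ generating $\mathscr{N}_j$. For any $x \in \mathbb{R}^\mathscr{S}_{>0}$ lying in $E_+(\mathscr{N}_j,K_j)$, the condition $N_j K_j(x)=0$ can be written as a linear combination of the forward reaction vectors of $C_j$,
\[
\sum_{a} K_{r_a}(x)\,(y'_{r_a}-y_{r_a}) + \sum_{t} \bigl(K_{s_t}(x)-K_{s_t'}(x)\bigr)(y'_{s_t}-y_{s_t}) = 0,
\]
where the $r_a$ range over the irreversible reactions of $C_j$ and the pairs $(s_t,s_t')$ over its reversible pairs with $s_t \in \mathscr{O}$. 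By linear independence of these forward reaction vectors every coefficient vanishes; in particular $K_{r_a}(x)=0$ for the irreversible reaction. But for a power-law kinetics $K_{r_a}(x)=k_{r_a}x^{F_{r_a}}$ with $k_{r_a}>0$, so $K_{r_a}(x)>0$ on $\mathbb{R}^\mathscr{S}_{>0}$, a contradiction. Hence $E_+(\mathscr{N}_j,K_j)=\emptyset$.

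Finally I would assemble the conclusion using independence. Since the $\mathscr{F}$-decomposition is independent, the Feinberg Decomposition Theorem (Theorem \ref{feinberg:decom:thm}) gives $E_+(\mathscr{N},K)=\bigcap_i E_+(\mathscr{N}_i,K_i) \subseteq E_+(\mathscr{N}_j,K_j)=\emptyset$. Thus for every positive rate vector the system has no positive equilibrium, so it certainly cannot have two stoichiometrically compatible positive equilibria, and the system has no capacity to admit multiple equilibria. The main obstacle is conceptual rather than computational: one must recognize that it is irreversibility combined with linear independence that produces \emph{nonexistence} (not merely uniqueness) of equilibria, and then correctly bridge the $\mathscr{P}$- and $\mathscr{F}$-decompositions through Theorem \ref{PifC} so that the independence hypothesis can be fed into Feinberg's theorem; the role of the ``at least one irreversible reaction'' assumption is exactly to supply the single coefficient that is forced to vanish yet cannot.
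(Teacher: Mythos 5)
Your proof is correct, but it takes a genuinely different route from the paper's. The paper argues entirely inside the Higher Deficiency Algorithm: independence of the $\mathscr{P}$-decomposition together with linear independence of the reaction vectors in each $\mathscr{N}_{\mathscr{O},i}$ forces $s=\sum |P_i|=|\mathscr{O}|$, hence ${\rm Ker\,}L_\mathscr{O}=\{0\}$ by rank--nullity; then every reaction falls (vacuously) into $P_0$, and the HDA's rule that every reaction in $P_0$ be reversible is violated by the assumed irreversible reaction, so the algorithm returns ``no capacity for multistationarity.'' You instead prove the stronger statement that $E_+(\mathscr{N},K)=\emptyset$ for every rate vector: linear independence of the forward reaction vectors forces each coefficient in $N_jK_j(x)=0$ to vanish, and the coefficient attached to an irreversible reaction is a single power-law term $k_{r_a}x^{F_{r_a}}>0$ on $\mathbb{R}^{\mathscr{S}}_{>0}$, a contradiction; Theorem \ref{feinberg:decom:thm} with independence then globalizes the emptiness. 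Both arguments are sound. The paper's version is shorter and matches the algorithmic framing of the section, but it leans on the HDA's internal rules (imported from the cited references) rather than on the dynamics. Yours is self-contained, elementary, and yields nonexistence rather than mere non-multiplicity; it also makes visible that the PL-RDK hypothesis is not really used beyond positivity of the rate functions on the positive orthant. One simplification you could make: since your hypotheses force ${\rm Ker\,}L_\mathscr{O}=\{0\}$, there is only one equivalence class ($P_0=\mathscr{O}$) and one fundamental class ($C_0=\mathscr{R}$), so the passage through Theorem \ref{PifC} and the Feinberg Decomposition Theorem is vacuous --- you could run your coefficient argument directly on the whole network.
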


\begin{proof}
Suppose the $\mathscr{P}$-decomposition is independent and the reaction vectors in $\mathscr{{N}}_{\mathscr{O},i}$ are linearly independent for each $i$. Thus, $Ker{L_\mathscr{O}}$ is trivial (containing the zero vector only). So every reaction must be placed in the zeroth equivalence class $P_0$. But there is one irreversible reaction contradicting the rule in the higher deficiency algorithm \cite{hernandez,ji} that each reaction in $P_0$ must be reversible (with respect to $\mathscr{R}$). Therefore, the system does not have the capacity to admit multiple equilibria.
\end{proof}

\subsection{Independent Type II $\mathscr{F}$-decompositions}

The following proposition expresses an apparently rare relationship between deficiency and rank of a CRN.

\begin{proposition}
For a CRN with an independent Type II $\mathscr{F}$-decomposition, $\delta \le s$ or equivalently, $s \le n - l \le 2s$.
\label{inde:typeII}
\end{proposition}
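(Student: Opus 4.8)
The plan is to chain together two facts already proved for independent $\mathscr{F}$-decompositions: the deficiency bound $\delta \le w_{II}$ and the cardinality bound $w \le s$. The first thing I would check is that the Type II hypothesis collapses the two bounds onto the same quantity. By definition a Type II $\mathscr{F}$-decomposition contains Type II subnetworks only, so $w_I = w_{III} = 0$ and therefore the total count $w$ of non-zero fundamental classes equals $w_{II}$. This identification is the one step that deserves care, since one must confirm that "Type II only'' genuinely forces $w = w_{II}$ rather than merely $w \ge w_{II}$; granting that each non-zero fundamental class is assigned exactly one of the three types, this is immediate.

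Next I would invoke the proposition asserting that an independent $\mathscr{F}$-decomposition satisfies $\delta \le w_{II}$. That proposition rests on the superadditivity $\delta \le \delta_1 + \cdots + \delta_w$ valid for independent decompositions (Lemma 1 of \cite{fortun2}), together with the fact that Type I and Type III subnetworks have deficiency zero, so only the Type II pieces contribute. Substituting $w_{II} = w$ gives $\delta \le w$.

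I would then apply the corollary "for an independent $\mathscr{F}$-decomposition, $w \le s$'', which follows from Proposition \ref{boundforw}: each subnetwork in an independent decomposition contributes rank $s_i \ge 1$, so a decomposition into $k$ pieces forces $k \le s$, hence $w \le s$. Combining the two inequalities yields $\delta \le w \le s$, that is, $\delta \le s$.

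Finally I would translate $\delta \le s$ into the stated two-sided bound using $\delta = n - l - s$. The inequality $\delta \le s$ reads $n - l - s \le s$, i.e. $n - l \le 2s$; and nonnegativity of the deficiency, $0 \le \delta = n - l - s$, gives $s \le n - l$. Together these are precisely $s \le n - l \le 2s$, establishing the equivalence. The only genuine obstacle is the bookkeeping identifying $w$ with $w_{II}$ under the Type II hypothesis; once that is in hand, the remainder is a direct citation of two earlier results followed by a one-line algebraic rearrangement.
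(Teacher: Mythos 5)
Your proposal is correct and follows essentially the same route as the paper, whose entire proof is the chain $\delta \le w_{II} = w \le s$; you simply make explicit the identification $w_{II}=w$ under the Type II hypothesis and the algebraic translation of $\delta \le s$ into $s \le n-l \le 2s$, both of which the paper leaves implicit.
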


\begin{proof}
We have for such a CRN, $\delta \le w_{II} = w \le s$.
\end{proof}

\begin{example}
The embedded CRN of an S-system with only irreversible species is an example of an independent Type II $\mathscr{F}$-decomposition network. This shows that the formula in \cite{FML2018} can be seen in this case as an instance of Proposition \ref{inde:typeII}. If the $\mathscr{F}$-decomposition is also incidence-independent, then the network deficiency is the number of species $m$. The sets Ssys$_{4m}$ and Ssys$_{2m+1}$ discussed in Section \ref{sufficientconditionfcdec} are subsets of this set of S-system CRNs.
\end{example}

\begin{example}
The following CRN is the well-known model of the EnvZ-OmpR system of E. coli studied by Shinar and Feinberg in \cite{SHFE2010}:
\[\begin{array}{c}
X \mathbin{\lower.3ex\hbox{$\buildrel\textstyle\rightarrow\over
{\smash{\leftarrow}\vphantom{_{\vbox to.5ex{\vss}}}}$}} XT \to {X_p}\\
{X_p} + Y \mathbin{\lower.3ex\hbox{$\buildrel\textstyle\rightarrow\over
{\smash{\leftarrow}\vphantom{_{\vbox to.5ex{\vss}}}}$}} {X_p}Y \to X + {Y_p}\\
XT + {Y_p} \mathbin{\lower.3ex\hbox{$\buildrel\textstyle\rightarrow\over
{\smash{\leftarrow}\vphantom{_{\vbox to.5ex{\vss}}}}$}} XT{Y_p} \to XT + Y
\end{array}\]
The $\mathscr{F}$-decomposition, like that of the multisite processive PD model, has only one subnetwork (i.e., the whole CRN) and is of Type II (its $\mathscr{F}$-subnetwork is clearly a forest of deficiency 1).
\end{example}

\begin{example}
In \cite{TMMNJ19} and \cite{VELO2014}, it is shown that evolutionary games with replicator dynamics can be represented as chemical kinetic systems. The following example is a symmetric population game with 2 pure strategies and non-linear, continuous payoff functions, also called ``playing the field'' games  \cite{CRTA2015}. 
Let $x = (x_1,x_2)$ be the vector of pure strategies and $F$ a $2 \times 2$ matrix of nonnegative real numbers with $F_i$ as its $i$-th row. The payoff function $f_i$ is defined as $f_i(x) = x^{F_i}$. A PLK representation is then given by the CRN:
\[ \begin{array}{l}
R_i: x_i \to 2x_i\\
R_{-i}: x_i \to 0\\
R_{-i}': 2x_i \to x_i
\end{array}\]
The rate functions  for the reactions are given by $K_i(x) = x_if_i(x)$, $K_{-i}'(x) = x_ix_1f_1(x)$ and $K_{-i}(x) = x_ix_2f_2(x)$. The rate constants are set to 1 to ensure dynamic equivalence with the replicator equation.

To determine the $\mathscr{F}$-decomposition, we consider the orientation given by $\{ R_i, R_{-i}\}$. The subnetwork generated by this orientation is an S-system in 2 irreversible variables, and its $\mathscr{F}$-decomposition is independent. Since a $\mathscr{P}$-decomposition of the game's CRN is independent, then its $\mathscr{F}$-decomposition is independent too.
\end{example}

\subsection{Independent Type III $\mathscr{F}$-decompositions}
\begin{runningexample}
The subnetwork of the Schmitz's  carbon cycle model is clearly an instance of an independent Type III $\mathscr{F}$-decomposition. We formulate a generalization in the following theorem:
\end{runningexample}

\begin{theorem}
\label{cycles:graph}
The following family of CRNs has bi-independent Type III $\mathscr{F}$-decomposition such that the $\mathscr{N}_i$'s are precisely the fundamental classes under the decomposition:
$\mathscr{N} =\{\mathscr{N}_i | \mathscr{N}_i = (\mathscr{C}_i, \mathscr{R}_i)\}$ with a (possibly broken) chain of long monomolecular directed cycles, i.e. of length 
$\ge$ 3, and $|\mathscr{C}_i \cap \mathscr{C}_j| \le 1$ if $j = i + 1$ for $i = 0,1,...,k-1$.
\end{theorem}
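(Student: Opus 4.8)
The plan is to exploit the fact that a monomolecular network with distinct complexes is, up to relabeling, its own incidence structure. First I would record that each complex is a single species and that distinct complexes are distinct species, so the molecularity matrix $Y$ is a permutation matrix and $N = Y I_a$ gives $S = Y(\mathrm{Im}\,I_a)$. Consequently $s = \dim S = \dim \mathrm{Im}\,I_a = n - l$, the deficiency of $\mathscr{N}$ is $0$, and, since $Y$ is invertible, the $\mathscr{F}$-decomposition is independent if and only if it is incidence-independent (an invertible map carries a direct-sum decomposition to a direct-sum decomposition). The same computation applied to each cycle $\mathscr{N}_i$ of length $p_i \ge 3$ gives $s_i = n_i - l_i = p_i - 1$; its $p_i$ reaction vectors sum to zero while any $p_i - 1$ of them are independent, so $\mathscr{N}_i$ is minimally dependent, forms a big cycle, and has deficiency $0$. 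By Lemma \ref{ji254}(iii) each $\mathscr{N}_i$ is a Type III subnetwork, so the decomposition (once shown to be the $\mathscr{F}$-decomposition) is of Type III.

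Second, the whole of bi-independence reduces to the incidence-independence criterion $n - l = \sum_i (n_i - l_i) = \sum_i (p_i - 1)$, which I would establish by counting. Writing $b$ for the number of adjacent pairs $(\mathscr{N}_i,\mathscr{N}_{i+1})$ that actually share a complex, the chain hypothesis ($|\mathscr{C}_i \cap \mathscr{C}_j| \le 1$ for $j = i+1$, and disjointness otherwise) forces each shared complex to be a single junction between two consecutive cycles, so $n = \sum_i p_i - b$. The same hypothesis makes the adjacency graph of the $k$ cycles a subgraph of a path, whose number of connected components is $l = k - b$; hence $n - l = \sum_i p_i - k = \sum_i (p_i - 1)$, matching $\sum_i (n_i - l_i)$. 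This yields incidence-independence, and by the first paragraph also independence, so the decomposition is bi-independent. This combinatorial bookkeeping under the ``possibly broken'' chain structure, in particular ruling out a complex shared by three cycles or a cyclic overlap pattern, is the step I expect to require the most care.

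Finally I would identify the fundamental classes. Since every reaction lies on a directed cycle, all reactions are irreversible, the unique orientation is $\mathscr{O} = \mathscr{R}$, and the $\mathscr{F}$- and $\mathscr{P}$-decompositions coincide. The indicator vector $v^i \in \mathbb{R}^{\mathscr{O}}$ of the reaction set $\mathscr{R}_i$ lies in $\mathrm{Ker}\,L_{\mathscr{O}}$ because the reaction vectors of cycle $i$ sum to zero; the $v^i$ have pairwise disjoint supports, hence are linearly independent, and since $\dim \mathrm{Ker}\,L_{\mathscr{O}} = r - s = \sum_i p_i - \sum_i (p_i - 1) = k$ equals the number of cycles, they form a basis. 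Reading off equivalence classes in this basis, a reaction in $\mathscr{R}_i$ has coordinate vector $e_i$ across the $v^l$, so two reactions are $\mathscr{F}$-equivalent exactly when these vectors are parallel, which happens precisely when the reactions lie in the same cycle; moreover no reaction maps to the zero coset, so $P_0 = \emptyset$. Therefore the fundamental classes are exactly the $\mathscr{R}_i$, which completes the proof.
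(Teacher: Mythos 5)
Your proof is correct and arrives at the same key object as the paper's --- the basis of $\mathrm{Ker}\,L_{\mathscr{O}}$ given by the indicator (all-ones) vectors of the individual cycles --- but it gets there by a genuinely different route. The paper fixes the unbroken chain of Figure \ref{cyclesgraph1}, expands $\sum \alpha_{y\to y'}(y'-y)=0$, and eliminates coefficients complex by complex to conclude that every kernel vector is constant on each cycle; independence is then read off from the block-diagonal basis, and incidence-independence is deduced only at the very end from zero deficiency. You instead exploit the monomolecular structure up front ($Y$ a permutation matrix, hence $s=n-l$, $\delta=0$, and independence equivalent to incidence-independence), establish incidence-independence by the combinatorial count $n=\sum_i p_i-b$ and $l=k-b$, and obtain the kernel basis by rank--nullity ($\dim\mathrm{Ker}\,L_{\mathscr{O}}=r-s=k$) together with the $k$ visibly independent, disjointly supported indicator vectors, with no elimination needed. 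Your route is shorter, treats the ``possibly broken'' case uniformly through the parameter $b$ (the paper only works the unbroken case and declares the broken one obvious), and makes explicit exactly where the chain hypothesis enters (no complex shared by three cycles, no cyclic overlap pattern) --- a point the paper glosses over. What the paper's explicit computation buys is self-containedness: it derives the kernel structure without presupposing the values of $s$ or $\delta$, in the same style used for the phosphorylation networks in Section \ref{PD:independent}. One ordering remark: your Type III and bi-independence claims for the decomposition into the $\mathscr{N}_i$ presuppose that the $\mathscr{N}_i$ are in fact the fundamental classes, which you only establish in your last paragraph; you flag this yourself and nothing breaks, but in a final write-up the identification of the $\mathscr{F}$-decomposition should come first.
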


\begin{figure*}
\begin{center}
\includegraphics[width=14cm,height=28cm,keepaspectratio]{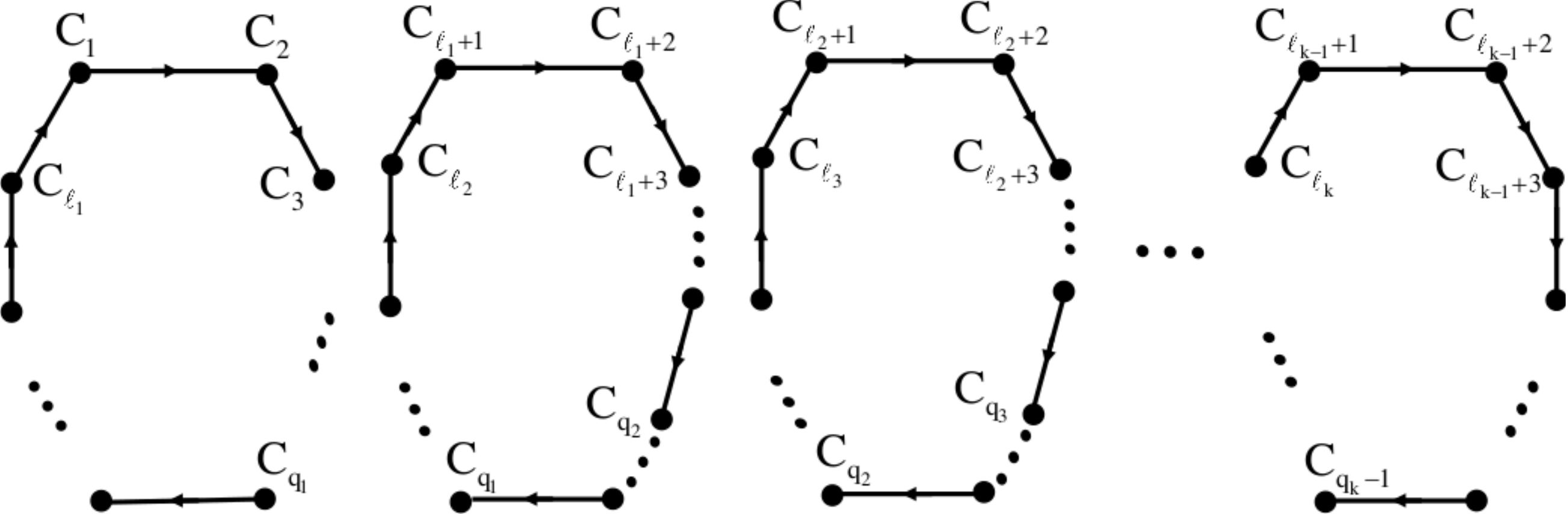}
\caption{An illustration of the graph with no break in Theorem \ref{cycles:graph}.}
\label{cyclesgraph1}
\end{center}
\end{figure*}

\begin{proof}
To simplify the proof, we will only show the case with no break, since the one with break in the graph is rather obvious. Without loss of generality, we assume the orientation given by the graph in Figure \ref{cyclesgraph1}. Note that there are exactly $\ell=\ell_1+\ell_2+...+\ell_k-(k-1)$ complexes in the network. In solving a basis for ${\rm Ker \ } L_\mathscr{O}$, we have the following equation:
\[{\alpha _1}\left( {{C_2} - {C_1}} \right) + {\alpha _2}\left( {{C_3} - {C_2}} \right) + ... + {\alpha _{{\ell _1}}}\left( {{C_1} - {C_{{\ell _1}}}} \right) + \]
\[{\alpha _{{\ell _1} + 1}}\left( {{C_{{\ell _1} + 2}} - {C_{{\ell _1} + 1}}} \right) + {\alpha _{{\ell _1} + 2}}\left( {{C_{{\ell _1} + 3}} - {C_{{\ell _1} + 2}}} \right) + ... + {\alpha _{{\ell _2}}}\left( {{C_{{\ell _1} + 1}} - {C_{{\ell _2}}}} \right) + ... + \]
\[{\alpha _{{\ell _{k - 1}} + 1}}\left( {{C_{{\ell _{k - 1}} + 2}} - {C_1}} \right) + {\alpha _{{\ell _{k - 1}} + 2}}\left( {{C_{{\ell _{k - 1}} + 3}} - {C_{{\ell _{_{k - 1}}} + 2}}} \right) + ... + {\alpha _{{\ell _k}}}\left( {{C_{{\ell _{_{k - 1}}} + 1}} - {C_{{\ell _k}}}} \right) = 0.\]
Hence, we have:
\[{C_1}\left( {{\alpha _{{\ell _1}}} - {\alpha _1}} \right) + {C_2}\left( {{\alpha _1} - {\alpha _2}} \right) + {C_3}\left( {{\alpha _2} - {\alpha _3}} \right) + ... + {C_{{\ell _1}}}\left( {{\alpha _{{\ell _1} - 1}} - {\alpha _{{\ell _1}}}} \right) + \]
\[{C_{{\ell _1} + 1}}\left( {{\alpha _{{\ell _2}}} - {\alpha _{{\ell _1} + 1}}} \right) + {C_{{\ell _1} + 2}}\left( {{\alpha _{{\ell _1} + 1}} - {\alpha _{{\ell _1} + 2}}} \right) + ... + {C_{{\ell _2}}}\left( {{\alpha _{{\ell _2} - 1}} - {\alpha _{{\ell _2}}}} \right) + ... + \]
\[{C_{{\ell _{_{k - 1}}} + 1}}\left( {{\alpha _{{\ell _k}}} - {\alpha _{{\ell _{k - 1}} + 1}}} \right) + {C_{{\ell _{k - 1}} + 2}}\left( {{\alpha _{{\ell _{k - 1}} + 1}} - {\alpha _{{\ell _{k - 1}} + 2}}} \right) + ... + {C_{{\ell _k}}}\left( {{\alpha _{{\ell _k} - 1}} - {\alpha _{{\ell _k}}}} \right) = 0.\]
For the first subnetwork, ${\alpha _{{\ell _1}}} = {\alpha _1} = {\alpha _2} = ... = {\alpha _{{q_1} - 1}}$ and 
${\alpha _{{q_1}}} = {\alpha _{{q_1} + 1}} = ... = {\alpha _{{\ell _1}}}$, which gives ${\alpha _1} = {\alpha _2} = ... = {\alpha _{{\ell _1}}}$.
A similar proof can be provided for the last subnetwork.
For the remaining subnetworks, with $i=2,3,...,k-1$, we obtain the summand:
\[.. .+ \left( {{\alpha _{{q_i} - 1}}} \right)'\left( {{C_{{q_i}}} - \left( {{C_{{q_i} - 1}}} \right)'} \right) + {\alpha _{{q_i}}}\left( {\left( {{C_{{q_i} + 1}}} \right)' - {C_{{q_i}}}} \right)  + \left( {{\alpha _{{q_i} + 1}}} \right)'\left( {\left( {{C_{{q_i} + 2}}} \right)' - \left( {{C_{{q_i} + 1}}} \right)'} \right) + ...\]
which yields
\[... + \left( {{C_{{q_i} - 1}}} \right)'\left( {\left( {{\alpha _{{q_i} - 2}}} \right)' - \left( {{\alpha _{{q_i} - 1}}} \right)'} \right) + ... + \left( {{C_{{q_i} + 1}}} \right)'\left( {\left( {{\alpha _{{q_i}}}} \right)' - \left( {{\alpha _{{q_i} + 1}}} \right)'} \right) + ....\]
Note that the ``apostrophe'' symbol is used to differentiate the positions of the complexes from two consecutive subnetworks. Now, the term $\left( {{C_{{q_i} - 1}}} \right)'\left( {\left( {{\alpha _{{q_i} - 2}}} \right)' - \left( {{\alpha _{{q_i} - 1}}} \right)'} \right)$ yields $\left( {{\alpha _{{q_i} - 2}}} \right)' = \left( {{\alpha _{{q_i} - 1}}} \right)'$ which implies the equality of the $\alpha_r$'s with position $r<q_i$. Similarly, the term $\left( {{C_{{q_i} + 1}}} \right)'\left( {\left( {{\alpha _{{q_i}}}} \right)' - \left( {{\alpha _{{q_i} + 1}}} \right)'} \right)$ yields $\left( {{\alpha _{{q_i}}}} \right)' - \left( {{\alpha _{{q_i} + 1}}} \right)'$ which implies the equality of $\alpha_r$'s with position $r>q_i$. 
We also obtain \[{C_{{q_i}}}\left( {\left( {{\alpha _{{q_i} - 1}}} \right)' - \left( {{\alpha _{{q_i}}}} \right)'} \right) + \left( {{C_{{q_i} + 1}}} \right)'\left( {\left( {{\alpha _{{q_i}}}} \right)' - \left( {{\alpha _{{q_i} + 1}}} \right)'} \right).\]
Since the complex $C_{q_i}$ is also present in the $i+1$-st subnetwork, we get
\[{C_{{q_i}}}\left( {\left( {{\alpha _{{q_i} - 1}}} \right)'' - \left( {{\alpha _{{q_i}}}} \right)''} \right) + \left( {{C_{{q_i} + 1}}} \right)''\left( {\left( {{\alpha _{{q_i}}}} \right)'' - \left( {{\alpha _{{q_i} + 1}}} \right)''} \right).\]
It follows that $\left( {{\alpha _{{q_i} + 1}}} \right)' = {\alpha _{{q_i}}}$.
But ${\alpha _{{\ell _i} + 1}} = {\alpha _{{\ell _{i + 1}}}}$, which proves the equality of the $\alpha$'s in a subnetwork. Thus, the subnetworks are precisely the fundamental classes which are independent. Indeed, the following is a basis for ${\rm Ker \ } L_\mathscr{O}$:
\[
 \begin{pmatrix} 
    F_{1} & &  & &  \\
     & F_{2} & &\text{\huge0} & \\
     & & \ddots &  & \\
     & \text{\huge0}& & \ddots & \\
     &  &   &   & F_{k} 
    \end{pmatrix}
\]
where each $F_i$ is an $\ell_i \times 1$ matrix with entries all equal to 1.

Since $\mathscr{N}$ has zero deficiency, this also proves the incidence-independence of the $\mathscr{F}$-decomposition.
\end{proof}

\section{The CF-RI$_+$ Transformation Method}
\label{sect:cfri:transform:method}
In this section, we present a transformation method whose key property is that it maps an irreversible reaction (a reversible pair of reactions) of the original system to an irreversible reaction (a reversible pair of reactions) of the target system. In other words, it is reversibility and irreversibility (RI) preserving.
This method was based on the generic CF-RM method (transformation of complex factorizable kinetics by reactant multiples)
which converts a PL-NDK to a PL-RDK system.
We add in the notation CF-RI  a sub-index ``+'' for two reasons: to indicate the ``positive'' (or preserving) relation and to highlight its partial coincidence with the CF-RM$_+$ variant of CF-RM. However, in most cases, CF-RI$_+$ adds new reactants which are not reactant multiples, so it is not a CF-RM variant. 

\subsection{Review of the CF-RM$_+$ Method}
\indent We present the CF-RM transformation method in \cite{cfrm}. One can construct a PL-RDK system from a given PL-NDK system using this method. A CF-subset contains reactions having the same kinetic order vectors. At each reactant complex, the branching reactions are partitioned into CF-subsets.
An NF-reactant complex has more than one CF-subset which makes the system NDK. For each subset, a complex is added to both the reactant and the product complexes of a reaction leaving the reaction vectors unchanged. The kinetic order matrix does not change as well.

The CF-RM method is given by the following steps.
\begin{itemize}
\item [1.] Determine the set of reactant complexes $\rho \left(\mathscr{R} \right)$.
\item [2.] Leave each CF-reactant complex unchanged.
\item [3.] At an NF-reactant complex, select a CF-subset containing the highest number of reactions and leave this CF-subset unchanged. For each of the remaining $N_R(y)-1$ CF-subsets, choose successively a multiple of $y$ which is not among the
current set of reactants. Different procedures are possible for the selection of a new reactant as long as
it is different from those in the current reactant set. After each choice, the current set is updated.
\end{itemize}

CF-RM$_+$ is a variant of CF-RM. All the steps are identical with the generic CF-RM method except that it uses additional criteria in the selection of the new reactant multiples. CF-RM$_+$ chooses the reactant multiple so that the new reactant differs from all existing complexes and all the new product complexes in the CF-subset also differ from all existing complexes \cite{cfrm}.

\subsection{Details of the CF-RI$_+$ Method}
Note that the CF-RM$_+$ method given in \cite{cfrm} updates the set of current complexes and complexes in the transform after each CF-subset of an NF-node is processed. The CF-RI$_+$ method proceeds as follows:
\begin{itemize}
\item[1.] Determine the reactant set $\rho({\mathscr{R})}$ and identify the subset $\rho({\mathscr{R})}_{CF}$ of CF-nodes.
\item[2.] If the reaction set ${\mathscr{R}_y:=\rho^{-1}(y)}$
of a CF-node $y$ has no reversible reaction with an NF-node, then it is left unchanged.  
\item[3.] At an NF-node without reversible reactions, carry out the steps of CF-RM$_+$.  
\item[4.] At an NF-node with a reversible reaction, among the CF-subsets without a reversible reaction (if there are any), select one with the highest number of reactions and leave this unchanged.
\item[5.] For the remaining CF-subsets without a reversible reaction, carry out CF-RM$_+$.
\item[6.] For a CF-subset with a reversible reaction, carry out CF-RM$_+$, but in addition, for each reversible reaction, also for the CF-subset of the reverse reaction (with the same ``catalytic'' complex). If the reactant complex of the reverse reaction is an NF-node, this additional step removes the original CF-subset from the reaction set of that NF-node. If this removal transforms the NF-node to a CF-node, then remove the node from the list of NF-nodes (to be processed).
\end{itemize}

\begin{remark}
It is in the last step that the resulting new reactant may be a non-multiple of the original reactant, since the ``catalytic'' complex added is determined by the reactant of the other reaction in the reversible pair.
\end{remark}

Two basic properties of the CF-RI$_+$ transformation are collected in the following proposition:
\begin{proposition}
Let $\mathscr{N}_{RI}$  be the CF-RI$_+$ transform of $\mathscr{N}$.
\begin{itemize}
\item[i.] If the CRN has no reversible reactions, then CF-RI$_+$ = CF-RM$_+$. 
\item[ii.] The stochiometric subspaces are equal, i.e., $S_{RI} =S$.
\end{itemize}
\end{proposition}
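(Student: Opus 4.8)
The plan is to treat the two parts separately, since (i) is a direct comparison of the two algorithms' step lists while (ii) rests on the invariance of reaction vectors under the transformation. For part (i), I would argue that under the hypothesis of no reversible reactions, steps 4, 5 and 6 of the CF-RI$_+$ procedure described in Section \ref{sect:cfri:transform:method} are vacuous, since each of them is explicitly triggered only at an NF-node possessing a reversible reaction (or for a CF-subset containing one). What then remains is: step 1, which is common to both methods; step 2, which---because no CF-node can have a reversible reaction with an NF-node---leaves every CF-node unchanged, exactly as CF-RM$_+$ leaves every CF-reactant complex unchanged; and step 3, which explicitly invokes CF-RM$_+$ at every NF-node, all of which are now NF-nodes ``without reversible reactions''. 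Thus the two procedures execute identical operations node by node, giving CF-RI$_+$ = CF-RM$_+$.

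For part (ii), the key observation---inherited from the CF-RM construction recalled at the start of the section---is that every operation performed by CF-RI$_+$ on a reaction $y \to y'$ replaces it by $y + c \to y' + c$ for some complex $c$ (a reactant multiple in steps 3 and 5, or a ``catalytic'' complex in step 6), where the case of an unchanged reaction corresponds to $c = 0$. Since the same $c$ is appended to both the reactant and the product, the reaction vector is preserved: $(y' + c) - (y + c) = y' - y$. I would make explicit that the transformation induces a bijection between the reactions of $\mathscr{N}$ and those of $\mathscr{N}_{RI}$, since no reaction is created or deleted, and that under this bijection corresponding reaction vectors are equal. Because the stoichiometric subspace is by definition the span of the reaction vectors, the two spans coincide and $S_{RI} = S$ follows immediately.

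The only point requiring care is the reversible case handled in step 6, where the reverse reaction of a reversible pair is modified ``with the same catalytic complex''. Here I would verify that the identical complex $c$ is appended to both the forward reaction $y \to y'$ and its reverse $y' \to y$, so that both reaction vectors $y' - y$ and $y - y'$ are preserved; the bookkeeping sub-step that may reclassify an NF-node as a CF-node only determines which nodes remain to be processed and never alters a reaction vector. I expect this step to be the main (though modest) obstacle, as it is the one place where the complex added is not a reactant multiple, so the argument cannot simply cite the CF-RM invariant but must appeal to the construction of step 6 directly.
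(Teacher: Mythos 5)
Your proof is correct and is essentially the argument the paper intends: the paper states this proposition without proof, treating both parts as immediate from the construction, and your writeup supplies exactly the missing details --- for (i) the observation that steps 4--6 are vacuous in the absence of reversible reactions so the two procedures coincide step by step, and for (ii) the fact (already asserted in the paper's review of CF-RM, where ``a complex is added to both the reactant and the product complexes of a reaction leaving the reaction vectors unchanged'') that every modification has the form $y \to y'$ $\rightsquigarrow$ $y + c \to y' + c$ and hence preserves reaction vectors and their span. Your extra care with step 6, where the appended ``catalytic'' complex is not a reactant multiple but is nonetheless added to both sides of each reaction of the reversible pair, is precisely the point worth making explicit.
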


\section{The ${\mathscr{F}}$-decomposition under the CF-RI$_+$ Transformation}
\label{f:decomposition:cfri:trans}
The following theorem implies that with or without the application of the CF-RM transformation, the computation on determining whether a PL-NDK system has the capacity to admit multiple equilibria using the Multistationarity Algorithm are the same with the assumption of the independence of the $\mathscr{F}$-decomposition.

\begin{theorem}
Let $({\mathscr{N}}, K)$ be a PL-NDK system and $({\mathscr{N}}_{RI}, K_{RI})$ a CF-RI$_+$ transform. Then
\begin{itemize}
\item[i.] for any orientation ${\mathscr{O}}$ of ${\mathscr{N}}$, $|{\mathscr{O}}| = |{\mathscr{O}_{RI}}|$, and
\item[ii.] the ${\mathscr{F}}$-decomposition of ${\mathscr{N}}$ is independent if and only if the ${\mathscr{F}}$-decomposition of ${\mathscr{N}}_{RI}$ is independent.
\end{itemize}
\end{theorem}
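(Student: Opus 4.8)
The plan is to exploit the two structural invariants of the CF-RI$_+$ transformation that are already on record: it preserves reaction vectors (a complex is appended to both sides of each reaction, leaving each $y'-y$ unchanged), and, by the preceding proposition, it preserves the stoichiometric subspace, $S_{RI}=S$. Since CF-RI$_+$ neither adds nor deletes reactions but only relabels the complexes of each reaction, there is a canonical bijection $\phi:\mathscr{R}\to\mathscr{R}_{RI}$, and the defining feature of the method is that $\phi$ carries irreversible reactions to irreversible reactions and reversible pairs to reversible pairs.

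For (i) I would argue directly from this RI-preservation. Because $\phi$ respects the reversible/irreversible dichotomy, $r_{\mathrm{irr}}(\mathscr{N})=r_{\mathrm{irr}}(\mathscr{N}_{RI})$ and $r_{\mathrm{rev}}(\mathscr{N})=r_{\mathrm{rev}}(\mathscr{N}_{RI})$. An orientation has cardinality $r_{\mathrm{irr}}+r_{\mathrm{rev}}$, so $|\mathscr{O}|=r_{\mathrm{irr}}+r_{\mathrm{rev}}=|\mathscr{O}_{RI}|$; moreover $\phi(\mathscr{O})$ is a legitimate orientation of $\mathscr{N}_{RI}$, which I denote $\mathscr{O}_{RI}$ and reuse in (ii).

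For (ii) the key step is to show that, under the induced identification $\phi_*:\mathbb{R}^{\mathscr{O}}\to\mathbb{R}^{\mathscr{O}_{RI}}$, the maps $L_\mathscr{O}$ and $L_{\mathscr{O}_{RI}}$ coincide. This follows from the two invariants: for $\alpha\in\mathbb{R}^\mathscr{O}$ the image $L_\mathscr{O}(\alpha)=\sum_r \alpha_r(y'_r-y_r)$ is unchanged when each $y'_r-y_r$ is replaced by the reaction vector of $\phi(r)$ (equal by construction), and both maps land in the common target $S=S_{RI}$. Hence $\mathrm{Ker}\,L_\mathscr{O}$ and $\mathrm{Ker}\,L_{\mathscr{O}_{RI}}$ correspond under $\phi_*$. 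Because the equivalence classes $P_i$ are defined purely through proportionality of coordinates across a basis of $\mathrm{Ker}\,L_\mathscr{O}$, and the fundamental classes are obtained from the $P_i$ together with the reversible-pair relation (also preserved by $\phi$), the partition of $\mathscr{R}$ into fundamental classes is carried by $\phi$ exactly onto the partition of $\mathscr{R}_{RI}$ into fundamental classes.

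It then remains to compare the subnetwork stoichiometric subspaces. Since $\phi$ matches the fundamental classes and preserves reaction vectors, the subspace $S_i$ of the $i$-th fundamental class of $\mathscr{N}$ equals the subspace $S_{i,RI}$ of the matching class of $\mathscr{N}_{RI}$, while the ambient subspaces agree, $S=S_{RI}$. Independence is the numerical identity $\dim S=\sum_i\dim S_i$; as every term is literally the same on both sides, the $\mathscr{F}$-decomposition of $\mathscr{N}$ is independent if and only if that of $\mathscr{N}_{RI}$ is. I expect the only genuine obstacle to be bookkeeping in the last step of CF-RI$_+$, where a reverse reaction may acquire a non-multiple reactant: I would verify carefully that this still yields a genuine reversible pair (the same ``catalytic'' complex on both members), so that $\phi$ truly preserves the reversible-pair relation and hence the fundamental-class partition on which the argument rests.
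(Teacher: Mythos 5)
Your proof is correct and rests on the same key observations as the paper's: the CF-RI$_+$ transform preserves reaction vectors and the reversible/irreversible status of each reaction, so $\mathrm{Ker}\,L_{\mathscr{O}}$ and hence the equivalence and fundamental classes are carried over unchanged, and the subnetwork stoichiometric subspaces coincide with the ambient $S=S_{RI}$. The only cosmetic difference is that you argue directly on the fundamental-class partition, whereas the paper routes the argument through the $\mathscr{P}$-decomposition and the equivalence of Theorem~\ref{PifC}; your closing caveat about step~6 (a reverse reaction acquiring a non-multiple reactant) is precisely what the construction is designed to guarantee, since the same ``catalytic'' complex is appended to both members of a reversible pair.
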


\begin{proof}
Since the transformation preserves the reversibility and irreversibility of the reactions, $|{\mathscr{O}}| = |{\mathscr{O}_{RI}}|$.
Suppose the $\mathscr{F}$-decomposition of ${\mathscr{N}}$ is independent.
By Theorem \ref{PifC}, $\mathscr{P}$-decomposition of ${\mathscr{N}}$ is independent. Note that the reaction vectors remain the same after the application of the transformation. Moreover, we assume that the reversibility and irreversibility of the reactions are retained and we have
\[\displaystyle \sum\limits_{y \to y' \in \mathscr{O}} {{\alpha _{y \to y'}}} \left( {y' - y} \right) = 0 = \displaystyle \sum\limits_{y \to y' \in \mathscr{O}_{RI}} {{\alpha _{y \to y'}}} \left( {y' - y} \right).\]
Hence, we can choose the same basis for $Ker L_\mathscr{O}$ and $Ker L_{\mathscr{O}_{RI}}$ such that the order of the rows of the reactions corresponding to the basis remains the same. Thus, the equivalence classes are retained under the transformation. Therefore, the $\mathscr{P}$-decomposition of ${\mathscr{N}}_{RI}$ is independent. It follows that the $\mathscr{F}$-decomposition of ${\mathscr{N}}_{RI}$ is independent. The same proof applies for the converse.
\end{proof}

\begin{runningexample}
We apply the CF-RI$_+$ transform to the reaction network of the Schmitz's carbon cycle model. We modify $R_5$ and obtain the following dynamically equivalent PL-RDK which also has an independent $\mathscr{F}$-decomposition.
\[ \begin{array}{lll}
R_1: M_1 \to M_5 & \ \ \ &  R_5: 2M_1  \to M_1+M_3\\
R_2: M_5  \to  M_1  &  \ \ \ & R_6: M_3 \to M_4 \\
R_3: M_5  \to M_6 &  \ \ \ & R_{7}: M_4 \to M_2\\
R_4: M_6  \to M_1  &  \ \ \ & R_{8}: M_2 \to M_1  \\
\end{array}\]
\end{runningexample}

\section{Conclusion and Outlook}
\label{sec:conclusion}
We summarize our results and provide some direction for future research.

\begin{itemize}
\item[1.] We introduced the ${\mathscr{O}}$-, ${\mathscr{P}}$-, and ${\mathscr{F}}$-decompositions underlying the HDA which is the basis of the multistationarity algorithm MSA for power-law kinetics. We derive properties of these decompositions such as independence and incidence-independence, and identify network classes where the $\mathscr{F}$-decomposition coincides with other known decompositions.
\item[2.] We classified the $\mathscr{F}$-decomposition into three types according to the types of subnetworks induced by the decomposition. We explored the network properties of each of these types of decompositions.
\item[3.] As our major examples, we have determined that the CRNs of phosphorylation/ dephosphorylation systems have bi-independent $\mathscr{F}$-decompositions. We also used a subnetwork of the Schmitz's carbon cycle model as a running example. We have shown that the ${\mathscr{F}}$-decomposition is bi-independent. We generalized this type of subnetworks with a chain of long monomolecular directed cycles which can be possibly broken.
\item[4.] We have shown that for independent $\mathscr{F}$-decomposition, the additional CF-RM transformation is not needed, and hence the MSA can be applied directly to the system.
\item[5.] One can prove the results for a larger class containing the set of independent $\mathscr{F}$-decompositions.
\end{itemize}

\section*{Acknowledgement}
BSH acknowledges the support of DOST-SEI (Department of Science and Technology-Science
Education Institute), Philippines for the ASTHRDP Scholarship grant.

\begin{thebibliography}{10}

\bibitem{arc_jose}
C. P. Arceo, E. Jose, A. Mar{\'i}n-Sanguino, E. Mendoza, Chemical reaction network approaches to Biochemical Systems Theory, \textit{Math. Biosci.} \textbf{269} (2015) 135--152.

\bibitem{arceo}
C. P. Arceo, E.  Jose, A. Lao, E. Mendoza, Reaction networks and kinetics of biochemical systems, \textit{Math. Biosci.} \textbf{283} (2017) 13--29.

\bibitem{CODH2018}
C. Conradi, A. Shiu, Dynamics of post-translational modification systems: recent results and future directions, \textit{Biophysical Journal} \textbf{114}(3) (2018) 507--515.

\bibitem{CRTA2015}
R. Cressman, Y. Tao, The replicator equation and other game dynamics, \textit{Proc. Natl. Acad. Sci. USA} \textbf{111} (2014).

\bibitem{ellison_ADA}
P. Ellison, The advanced deficiency algorithm and its applications to mechanism discrimination, Ph.D. thesis, Department of Chemical Engineering, University of Rochester, 1998.

\bibitem{FML2018}
H. Farinas, E. Mendoza, A. Lao, Decompositions of chemical reaction networks and embedded networks of S-systems, in preparation.

\bibitem{feinberg12}
M. Feinberg, Chemical reaction network structure and the stability of complex isothermal reactors I: The deficiency zero and deficiency one theorems, \textit{Chem. Eng. Sci.} \textbf{42} (1987) 2229--2268.

\bibitem{feinberg}
M. Feinberg, \textit{Lectures on chemical reaction networks}. Notes of lectures given at the Mathematics Research Center of the University of Wisconsin, 1979. Available at https://crnt.osu.edu/LecturesOnReactionNetworks. 

\bibitem{feinberg_DOA}
M. Feinberg, Multiple steady states for chemical reaction networks of deficiency one, \textit{Arch. Ration. Mech. Anal.}, \textbf{132} (1995) 371--406.

\bibitem{feinberg2}
M. Feinberg, The existence and uniqueness of steady states for a class of chemical reaction networks, \textit{Arch. Ration. Mech. Anal.} \textbf{132} (1995) 311--370.

\bibitem{fortun2}
N. Fortun, A. Lao, L. Razon, E. Mendoza, A deficiency zero theorem for a class of power-law kinetic
systems with non-reactant-determined interactions, \textit{MATCH Commun. Math. Comput. Chem.} \textbf{81}(3) (2019) 621--638.

\bibitem{GHMS2018}
E. Gross, H. Harrington, N. Meshkat, A. Shiu, Joining and decomposing reaction networks, (2018, submitted).

\bibitem{hernandez}
B. Hernandez, E. Mendoza, A. de los Reyes V, A computational approach to multistationarity of power-law kinetic
systems, (to appear in \textit{J. Math. Chem.}) (2019). DOI: 10.1007/s10910-019-01072-7

\bibitem{ji}
H. Ji, Uniqueness of equilibria for complex chemical reaction networks, Ph.D. Dissertation, Ohio State University, 2011.

\bibitem{joshi}
B. Joshi, A. Shiu, Atoms of multistationarity in chemical reaction network, \textit{J. Math. Chem.} \textbf{51}(1) (2013) 153--178.

\bibitem{MURE2014}
S. M{\"u}ller, G. Regensburger, Generalized mass action systems and positive solutions of polynomial equations with real and symbolic exponents (invited talk), in: W. M. Seiler, E. V. Vorozhtsov (Eds.), {\it Computer Algebra in Scientific Computing, CASC 2014\/}, Springer, Cham, 2014, pp. 302--323.

\bibitem{cfrm}
A. Nazareno, R. Eclarin, E. Mendoza, A. Lao, Linear conjugacy of chemical kinetic systems, \textit{Math. Biosci. Eng.} \textbf{16}(6) (2019) 8322--8355.

\bibitem{schmitz}
R. Schmitz, The Earth's carbon cycle: Chemical engineering course material, \textit{Chemical Engineering Education} \textbf{36}(4) (2002) 296--309.

\bibitem{SHFE2010}
G. Shinar, M. Feinberg, Structural sources of robustness in biochemical reaction networks, \textit{Science} \textbf{327} (2010) 1389--1391.

\bibitem{TAM2018} D. A. Talabis, C. P. Arceo, E. Mendoza, Positive equilibria of a class of power-law kinetics, \textit{J. Math. Chem.} \textbf{56}(2) (2018) 358--394.

\bibitem{TMMNJ19}
D. Talabis, D. Magpantay, E. Mendoza, E. Nocon, E. Jose, Complex balanced equilibria of weakly reversible poly-PL kinetic systems and evolutionary games,  \textit{MATCH Commun. Math. Comput. Chem.} 2019 accepted.

\bibitem{VELO2014}
T. Veloz, P. Razeto-Barry, P. Dittrich, A. Fajardo, Reaction networks and evolutionary game theory, \textit{J.Math. Biol.} \textbf{68} (2014) 181--206.

\bibitem{voit}
E. Voit, Computational analysis of biochemical systems: A practical guide for biochemists and molecular biologists, {\it Cambridge Univ. Press}, Cambridge, 2000.

\end{thebibliography}

\baselineskip=0.25in

\appendix
\section{Nomenclature}
\label{nomenclature:appendix}
\subsection{List of abbreviations}
\begin{tabular}{ll}
\noalign{\smallskip}\hline\noalign{\smallskip}
Abbreviation& Meaning \\
\noalign{\smallskip}\hline\noalign{\smallskip}
CF& complex factorizable \\
CKS& chemical kinetic system\\
CRN& chemical reaction network\\
CRNT& Chemical Reaction Network Theory \\
GMA& generalized mass action\\
HDA& higher deficiency algorithm\\
MAK& mass action kinetics\\
MSA& multistationarity algorithm\\
PLK& power-law kinetics\\
PL-NDK& power-law non-reactant-determined kinetics\\
PL-RDK& power-law reactant-determined kinetics\\
SFRF& species formation rate function\\
\noalign{\smallskip}\hline
\end{tabular}
\subsection{List of important symbols}
\begin{tabular}{ll}
\noalign{\smallskip}\hline\noalign{\smallskip}
Meaning& Symbol \\
\noalign{\smallskip}\hline\noalign{\smallskip}
deficiency& $\delta$  \\
dimension of the stoichiometric subspace& $s$   \\
incidence map& $I_a$\\
molecularity matrix& $Y$\\
number of complexes& $n$\\
number of linkage classes& $l$\\
number of strong linkage classes& $sl$\\
orientation& $\mathscr{O}$\\
stoichiometric matrix& $N$\\
stoichiometric subspace& $S$\\
subnetwork of $\mathscr{N}$ with respect to $\mathscr{O}$& $\mathscr{N}_\mathscr{O}$\\
\noalign{\smallskip}\hline
\end{tabular}

\end{document}